\DeclareMathOperator{\sgn}{sgn}
\newtheorem{lem}{Lemma}[section]
\newtheorem{thm}{Theorem}[section]
\newtheorem{prop}[thm]{Proposition}
\newtheorem{rmk}{Remark}[section]
\theoremstyle{definition}
\newtheorem{defn}{Definition}[section]
\numberwithin{equation}{section}
\date{\empty}
\title{Exponential stability of general 1-D quasilinear systems with source terms for the $C^{1}$ norm under boundary conditions
}
\author{Amaury Hayat}
\affil{Laboratoire Jacques-Louis Lions, UMR CNRS 7598, UPMC Université Paris 6, Sorbonne université.}
\begin{document}

\maketitle

\begin{abstract}
We address the question of the exponential stability for the $C^{1}$ norm of general 1-D quasilinear systems with source terms under boundary conditions. To reach this aim, we introduce the notion of basic $C^{1}$ Lyapunov functions,
 a generic kind of exponentially decreasing function whose existence ensures the exponential stability of the system for the $C^{1}$ norm.
%and we show a necessary and sufficient condition such basic $C^{1}$ Lyapunov function exists for. When such function exists, we also show a necessary condition on the boundary control such that 
%We show that the existence of a basic $C^{1}$ Lyapunov function is subject to two conditions: an interior condition, intrinsic to the system and a condition on the boundary control. 
We show that the existence of a basic $C^{1}$ Lyapunov function is subject to two conditions: an interior condition, intrinsic to the system, and a condition on the boundary controls. %In particular 
We give explicit sufficient interior and boundary conditions such that the system 
%admits a basic $C^{1}$ Lyapunov function and
 is exponentially stable for the $C^{1}$ norm and we show that the interior condition is also necessary to the existence of a basic $C^{1}$ Lyapunov function.
%on the system so that a basic $C^{1}$ Lyapunov function exists for an appropriate boundary control, and, when this condition is satisfied, we give an explicit condition on the boundary control such that the system is exponentially stable for the $C^{1}$ norm.
%and a sufficient boundary condition
Finally, we show that the results conducted in this article are also true under the same conditions for the exponential stability in the $C^{p}$ norm, for any $p\geq1$.
%we give an explicit necessary and sufficient interior condition 
%on the system so that a basic $C^{1}$ Lyapunov function exists for an appropriate boundary control, and, when this condition is satisfied, we give an explicit condition on the boundary control such that the system is exponentially stable for the $C^{1}$ norm.
%and a sufficient boundary condition
\end{abstract}

\section*{Introduction}

Hyperbolic systems have been studied for several centuries,
%since
 as their importance in representing physical phenomena is 
% indisputable. 
undeniable.
From gaz dynamics to population evolution through \textcolor{black}{wave} equations and fluid dynamics they are found in many areas.
As they represent the propagation phenomena of numerous physical or industrial systems \cite{Disturbance, GasfanGugat, LeugeringSchmidt}, the issue of their controllability and stability is a major concern, with both theoretical and practical interest. 
If the question of controllability has been well-studied \cite{ControllabilityLi2}, the problem of stabilization under boundary control, however, is only
well known in the particular case of an absence of source term. 
However, in many case neglecting the source term is a crude approximation and reduces greatly the analysis, in particular because it implies that the system can be reduced to decoupled equations or slightly coupled equations (see \cite{deHalleux} for instance). For most physical equations the source term cannot therefore be neglected and the steady-states we aim at stabilizing can be non-uniform with potentially large variations of amplitude (e.g. Saint-Venant equations, see \cite{Chanson} Chapter 5 or \cite{LyapH2}, Euler equations, see \cite{DickGugatLeugering} or \cite{LeugeringH2}, Telegrapher equations, etc.).
%In many applications we need to take into account these non uniform steady-states and we aim at stabilizing them, which is impossible when not taking the source term into account.  
Taking into account these nonuniform steady-states and stabilizing them is impossible when not taking the source term into account, although it is an important issue in many applications.
In presence of a source term some results exist for the $H^{2}$ norm (and actually $H^{p},$ $p\geq2$), however, few results exist for the more natural $C^{1}$ norm (and consequent $C^{p}$ norms, $p\geq1$). 
It has to be underlined that for nonlinear systems the stability in these two main topologies are not equivalent as shown in \cite{CoronNguyen}.
In this article we deal with the stability in $C^{1}$ norm of such hyperbolic systems of quasilinear partial differential equations with source term under boundary conditions.
\vspace{\baselineskip}

Several methods are usually used to study the stability of systems. The Lyapunov approach, one of the most famous,
is the one we opted for in this article. 
This approach has the advantage, among others, of guaranteeing some robustness and of being convenient to deal with non-linear problems \cite{CoronNL, Robustness}.
We first introduce the \textit{basic $C^1$ Lyapunov functions}, a kind of natural Lyapunov functions for the $C^1$ norm and we then find a sufficient condition such that the system admits 
%such
a basic $C^1$ Lyapunov function. We show that this sufficient condition is twofold: a first intrinsic condition on the system and a second condition on the boundary controls. 
We show then that this sufficient condition on the system 
is in fact necessary in the general case for the existence of a basic $C^{1}$ Lyapunov function.
% a checker
\vspace{\baselineskip}\\
The organisation of this paper is as follows: In Section 1, we recall some preliminary properties about 1-D quasilinear hyperbolic system. Section 2 presents an overview of the context and previous results. Section 3 states the main results, which are proven in Section 4. Section 5 presents several remarks and further detail to the results.
\vspace{\baselineskip}\\
\section{\textcolor{black}{Preliminary properties of 1-D quasilinear hyperbolic systems}}
A general quasilinear hyperbolic system can be written as:
\begin{eqnarray}
\mathbf{Y}_{t}+F(\mathbf{Y})\mathbf{Y}_{x}+D(\mathbf{Y})=0, 
\label{system11}\\
\mathcal{B}(\mathbf{Y}(t,0),\mathbf{Y}(t,L))=0,
\label{system12}
\end{eqnarray} 
with $\mathbf{Y}:[0,+\infty)\times[0,L]\to\mathbb{R}^{n}$ and $F:U\to \mathcal{M}_{n}(\mathbb{R})$ and $D:U\to \mathbb{R}^{n}$ where 
$U$ is a non empty connected open set of $\mathbb{R}^{n}$
and $F$ is strictly hyperbolic, i.e. for all $\mathbf{Y}\in U,$ $F(\mathbf{Y})$ has real, distinct eigenvalues. We suppose in addition that these eigenvalues are non-vanishing. $\mathcal{B}$ is a map from $U\times U$ to $\mathbb{R}$ whose form will be precised later on, such that the system (\ref{system11})--(\ref{system12}) is well-posed.
\vspace{\baselineskip}\\
We call $\mathbf{Y}^{*}$ a steady-state of the previous system that we aim at stabilizing. Note that, due to the source term, $\mathbf{Y}^{*}$ is not necessarily uniform and the problem cannot be directly treated as a null stabilization. We therefore
use the following transformation:
\begin{equation}
\mathbf{u}(x,t)=N(x)(\mathbf{Y}(x,t)-\mathbf{Y}^{*}(x)),
\end{equation} 
where $N$ is such that:
\begin{equation}
 NF(\mathbf{Y}^{*})N^{-1}=\Lambda,
 \label{Lambda}
\end{equation} 
where $\Lambda$ is diagonal and corresponds to the eigenvalues of $F(\mathbf{Y}^{*})$. Note that such $N$ exists as the system is strictly hyperbolic. Therefore, the system (\ref{system11})--(\ref{system12}) is equivalent to
\begin{gather}
\mathbf{u}_{t}+A(\mathbf{u},x)\mathbf{u}_{x}+B(\mathbf{u},x)=0,
\label{system21}
\\
\mathcal{B}(N(0)^{-1}\mathbf{u}(0,t)+\mathbf{Y}^{*}(0),N(L)^{-1}\mathbf{u}(L,t)+\mathbf{Y}^{*}(L))=0,
\label{system22}
\end{gather} 
with
\begin{gather}
A(\mathbf{u},x)=N(x)F(\mathbf{Y})N^{-1}(x)=N(x)F(N^{-1}(x)\mathbf{u}+\mathbf{Y}^{*}(x))N^{-1}(x),
\label{defA}\\
B(\mathbf{u},x)=N(F(\mathbf{Y})(\mathbf{Y}^{*}_{x}+(N^{-1})'\mathbf{u})+D(\mathbf{Y})).
\label{defB}
\end{gather}

%Let us suppose that $A$ is a $C^{2}$ function, it has been proved (see \cite{Coron1D} for more detail) that there exists $\eta_{0}>0$ such that there exists $E
%%=(m_{ij})_{(i,j)\in[1,n]}
%\in C^{2}(\mathcal{B}_{\eta_{0}}\times[0,L],\mathcal{M}_{n}(\mathbb{R}))$ satisfying 
%\begin{gather}
%E(\mathbf{u},x)A(\mathbf{u},x)=\lambda(\mathbf{u},x)E(\mathbf{u},x) \text{ on }\mathcal{B}_{\eta_{0}}\times[0,L],
%\label{defm}\\
%\text{and } E(\mathbf{0},x)=Id,
%\label{condId}
%\end{gather} 
%where $\lambda$ is a diagonal matrix, whose diagonal entries are the eigenvalues of $A(\mathbf{u},x)$ and $\mathcal{B}_{\eta_{0}}$ is the ball of radius $\eta_{0}$ in the space of continuous function endowed with the $L^{\infty}$ topology.\\

The difficulty when there is a source term is twofold, and its first aspect can be seen in (\ref{defA}):
%--(\ref{defm})
 we cannot assume that the steady state $\mathbf{Y}^{*}$ we aim at stabilizing is uniform. Therefore $A$ 
 %and $E$ 
 depends not only on $\mathbf{u}$ but also directly on $x$, and having $A(\mathbf{u}(t,x))$ is different from having $A(\mathbf{u}(t,x),x)$ especially when $\mathbf{u}$ is a perturbation: if $\mathbf{u}$ can still be seen as a perturbation, the dependency on $x$ can no longer be seen itself as a perturbation.\\

Its second aspect is that the source term creates a coupling between the two quantities which is a zero order term that can disturb the Lyapunov function and we will see in Section 2, 3 and 4 that this implies that there does not always exist a simple quadratic Lyapunov function ensuring exponential stability
%whatever the boundary conditions are chosen to be,
even when the boundary conditions can be chosen arbitrarly,
 while this phenomenon cannot appear in the absence of source term.\\

From the strict hyperbolicity
%, and the fact that $U$ is connected, 
we can denote by $m$ the integer such that
\begin{equation}
\text{ }\Lambda_{i}>0\text{ and }\forall i\leq m,\text{ }\Lambda_{i}<0,\text{ }\forall i \in[m+1,n].
\end{equation}
We now denote by $\mathbf{u}_{+}$ the vector of components associated to positive eigenvalues $(u_{1},...,u_{m})^{T}$ and similarly $\mathbf{u}_{-}$ refers to  $(u_{m+1},...,u_{n})^{T}$. In the special cases where $m=0$ or $m=n$ $\mathbf{u}$ is equal to $\mathbf{u}_{-}$ or $\mathbf{u}_{+}$ respectively. 

% \textcolor{black}{Let us suppose in addition that $F$ and $Y^{*}$ are $C^{2}$ functions, then $A$ is a $C^{2}$ function and there exists $\eta_{0}>0$ and $E
% %%=(m_{ij})_{(i,j)\in[1,n]}
% \in C^{2}(\mathcal{B}_{\eta_{0}}\times[0,L],\mathcal{M}_{n}(\mathbb{R}))$, where $\mathcal{B}_{\eta_{0}}\subset\mathbb{R}^{n}$ is the ball of radius $\eta_{0}$, such that 
% \begin{gather}
% E(\mathbf{u},x)A(\mathbf{u},x)=\lambda(\mathbf{u},x)E(\mathbf{u},x) \text{ }\forall\text{  }(\mathbf{u},x)\in\mathcal{B}_{\eta_{0}}\times[0,L],
% \label{defm}\\
% \text{and } E(\mathbf{0},x)=Id,
% \label{condId}
% \end{gather} 
% where $\lambda$ is a diagonal matrix, whose diagonal entries are the eigenvalues of $A(\mathbf{u},x)$.}
%in the space of continuous function endowed with the $L^{\infty}$ topology.\\

From now on we will focus on boundary conditions of the form
\begin{equation}
\begin{pmatrix}
\mathbf{u}_{+}(t,0)\\
\mathbf{u}_{-}(t,L)
\end{pmatrix}
= G\begin{pmatrix}
\mathbf{u}_{+}(t,L)\\
\mathbf{u}_{-}(t,0)
\end{pmatrix}.
\label{nonlocal}
\end{equation}
Note that with this boundary conditions the incoming signal is a function of the outgoing signal, which is what is typically expected from a feedback control law and enables the well-posedness of the system (see Theorem \ref{resultatLiRaoWang} later on). 
However the method presented in this article could also be applied to any other boundary conditions of the form (\ref{system12}) that also ensure well-posedness.

We also introduce the consequent first order compatibility conditions for an initial condition $\mathbf{u}^{0}$:
\begin{gather}
\begin{pmatrix}
\mathbf{u}_{+}^{0}(0)\\
\mathbf{u}_{-}^{0}(L)
\end{pmatrix}
= G\begin{pmatrix}
\mathbf{u}_{+}^{0}(L)\\
\mathbf{u}_{-}^{0}(0)
\end{pmatrix},\label{compat1}\\
\begin{split}
&\begin{pmatrix}
\left(A(\mathbf{u}^{0}(0),0)\partial_{x}\mathbf{u}^{0}(0)+B(\mathbf{u}^{0}(0),0)\right)_{+}\\
\left(A(\mathbf{u}^{0}(L),L)\partial_{x}\mathbf{u}^{0}(L)+B(\mathbf{u}^{0}(L),L)\right)_{-}
\end{pmatrix}
=\\ & G'\begin{pmatrix}
\mathbf{u}_{+}^{0}(L)\\
\mathbf{u}_{-}^{0}(0)
\end{pmatrix} \begin{pmatrix}
\left(A(\mathbf{u}^{0}(L),L)\partial_{x}\mathbf{u}^{0}(L)+B(\mathbf{u}^{0}(L),L)\right)_{+}\\
\left(A(\mathbf{u}^{0}(0),0)\partial_{x}\mathbf{u}^{0}(0)+B(\mathbf{u}^{0}(0),0)\right)_{-}
\end{pmatrix}.
\end{split}
\label{compat2}
\end{gather}

Well-posedness of the system (\ref{system21}),(\ref{nonlocal}) for any initial condition $\mathbf{u}^{0}$ that satisfies the compatibility conditions (\ref{compat1}),(\ref{compat2})
is given by Li \cite{LiYu}
%\cite{Tatsien} 
(see also \cite{Qin}), one has the following theorem:
\begin{thm}
For all $T>0$ there exist $C(T)>0$ and $\eta(T)>0$ such that, for every $\mathbf{u_{0}}\in C^{1}([0,L],\mathbb{R}^{n})$ satisfying the compatibility conditions (\ref{compat1}), (\ref{compat2}) and such that $\lvert \mathbf{u}_{0} \rvert_{1}\leq \eta$, the system (\ref{system21})-(\ref{nonlocal}) has a unique solution on $[0,T]\times[0,L]$ with initial condition $\mathbf{u_{0}}$. Moreover one \textcolor{black}{has}:
\begin{equation}
\lvert \mathbf{u}(t,\cdot)\rvert_{1}\leq C_{1}(T) \lvert \mathbf{u}(0,\cdot)\rvert_{1},\text{ }\forall t\in[0,T].
\label{Well-posedness}
\end{equation} 
\label{resultatLiRaoWang}
\end{thm}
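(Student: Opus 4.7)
The plan is to prove the theorem by the method of characteristics combined with a Picard iteration scheme in a suitable closed subset of $C^{1}([0,T]\times[0,L])$, along the classical lines of Li--Yu for quasilinear hyperbolic systems.

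First, I would exploit the strict hyperbolicity to reduce (\ref{system21}) locally to a family of scalar ODEs along characteristics. Since $F(\mathbf{Y}^{*})$ has distinct non-vanishing eigenvalues, by continuity $A(\mathbf{u},x)$ retains this property uniformly for $\mathbf{u}$ in a ball of some radius $\eta_{0}>0$. Denote its eigenvalues by $\lambda_{i}(\mathbf{u},x)$ and the associated left and right eigenvectors by $\ell_{i}$ and $r_{i}$. For each $i$, the $i$-th characteristic curve $s\mapsto \xi_{i}(s;t,x)$ is defined by $d\xi_{i}/ds = \lambda_{i}(\mathbf{u}(s,\xi_{i}),\xi_{i})$ with $\xi_{i}(t)=x$, and the equation $\ell_{i}\cdot(\mathbf{u}_{t}+A\mathbf{u}_{x}+B)=0$ becomes an ODE for $\ell_{i}\cdot\mathbf{u}$ along $\xi_{i}$.

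Next I would set up the iteration. Starting from $\mathbf{u}^{(0)}\equiv\mathbf{u}^{0}$, define $\mathbf{u}^{(k+1)}$ by integrating the characteristic ODEs of $\mathbf{u}^{(k)}$ backward from $(t,x)$ until they either meet $\{s=0\}$, where the data $\mathbf{u}^{0}$ is imposed, or hit a lateral boundary, where the incoming value is prescribed by (\ref{nonlocal}) applied to the already computed outgoing values. The strict hyperbolicity and the sign conventions following (\ref{Lambda}) guarantee that the number of incoming characteristics at $x=0$ (resp. $x=L$) matches the $m$ (resp. $n-m$) relations in (\ref{nonlocal}). A priori $C^{1}$ bounds are then obtained by applying Gr\"onwall's inequality along each characteristic to the ODE for $\ell_{i}\cdot\mathbf{u}^{(k+1)}$, yielding a control on $|\mathbf{u}^{(k+1)}(t,\cdot)|_{0}$. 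Differentiating (\ref{system21}) in $x$ and proceeding analogously — the derivative $\partial_{x}\mathbf{u}$ satisfies a similar hyperbolic system whose lower-order terms are quadratic in $\partial_{x}\mathbf{u}$ — produces a closed differential inequality for $|\partial_{x}\mathbf{u}^{(k+1)}(t,\cdot)|_{0}$ that is again handled by Gr\"onwall. Provided $\eta(T)$ is chosen of order $e^{-CT}$, the ball $\{|\mathbf{u}|_{1}\leq C(T)\eta(T)\}$ is stable under the iteration, and the compatibility conditions (\ref{compat1})--(\ref{compat2}) ensure that the iterates $\mathbf{u}^{(k+1)}$ remain $C^{1}$ at the corners $(0,0)$ and $(0,L)$.

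Finally, I would establish contraction in a weaker norm: on this stable ball the map $\mathbf{u}^{(k)}\mapsto \mathbf{u}^{(k+1)}$ is shown to be a contraction in the $C^{0}$ norm, since the uniform $C^{1}$ bound controls the perturbation of the characteristic curves when $\mathbf{u}^{(k)}$ is varied. This yields a unique fixed point that inherits the $C^{1}$ bound of the iterates, giving (\ref{Well-posedness}). The main obstacle is the interaction between the quasilinear principal part and the nonlinear boundary coupling: when $T$ is large compared to $L/\min_{i}|\Lambda_{i}|$, characteristics bounce between the two boundaries several times, with speeds depending on the solution itself. Keeping track of these reflections, propagating the compatibility conditions through the iteration, and ensuring that $\mathbf{u}^{(k)}$ remains in the open set $U$ on which $F$ is defined, are the delicate technical points; they are exactly what force $\eta(T)$ to shrink as $T$ grows.
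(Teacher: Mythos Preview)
The paper does not prove this theorem at all: it is quoted as a known well-posedness result, attributed to Li--Yu \cite{LiYu} (see also \cite{Qin}), and used as a black box throughout. Your outline is a reasonable high-level sketch of the classical Li--Yu argument (characteristic form, Picard iteration, $C^{0}$ contraction under a uniform $C^{1}$ bound, compatibility conditions to handle the corners), so in spirit you are reproducing the cited reference rather than diverging from the paper.

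One remark on the sketch itself: the step ``contraction in $C^{0}$ yields a fixed point that inherits the $C^{1}$ bound of the iterates'' is the place where real work hides. A $C^{0}$ limit of a sequence bounded in $C^{1}$ is a priori only Lipschitz; to get an honest $C^{1}$ solution one typically shows that the derivatives $\partial_{x}\mathbf{u}^{(k)}$ themselves converge (e.g.\ by proving contraction for the pair $(\mathbf{u}^{(k)},\partial_{x}\mathbf{u}^{(k)})$ in $C^{0}$, or by an Ascoli argument plus uniqueness). If you intend to write this out in full, that is the gap to close.
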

%And moreover,
%\begin{thm}(Li, Rao, Wang 2009)
%For all $T\in\mathbb{R}_{+}^{*}$, let $\mathbf{u}$ be a smooth solution of the system (\ref{system21}),(\ref{nonlocal}), there exists $C_{1}>0$ such that:
%\begin{equation}
%\lvert \mathbf{u}(t,\cdot)\rvert_{1}\leq C_{1}(T) \lvert \mathbf{u}(0,\cdot)\rvert_{1},\text{ }\forall t\in[0,T]
%\label{Well-posedness}
%\end{equation} 
%where $C_{1}$ is independent of $\mathbf{u}$.
%\label{resultatLiRaoWang}
%\end{thm}

\section{Context and previous results}
\paragraph{General hyperbolic system without source term}
The exponential stability of general strictly hyperbolic systems of the form (\ref{system21}) without source term, i.e. $B\equiv0$, has been mainly studied in the linear or non-linear case (see for instance \cite{CoronC1, CorondAndreaBastin, greenberg, slemrod, Tatsien, CoronBastinPI, CoronBV}) under various boundary conditions or boundary controls
%Many different boundary controls have been studied 
(e.g. Proportional-integral control, dead beat control, single boundary control, etc.).
A large part of these studies has
%most of them have 
been conducted using boundary conditions of the form (\ref{nonlocal}).
For such boundary conditions in non-linear systems the exponential stability depends on the 
topology \cite{CoronNguyen} 
and in particular that the stability in 
%$L^{2}$ or 
$H^{2}$ norm does not imply the stability in $C^{1}$ norm. In \cite{CoronNguyen} the authors also gave a sufficient condition for stability in 
the $W^{2,p}$ norm for $p\in[1,+\infty]$:
\begin{equation}
\rho_{p}(G'(0))<1,
\label{rho}
\end{equation} 
where $G$ is given in (\ref{nonlocal}) and the definition of $\rho_{p}$ is
\begin{equation}
\rho_{p}(M)=\inf(\lVert\Delta M \Delta^{-1}\rVert_{p}, \Delta\in D_{n}^{+}),\text{ }1\leq p\leq+\infty
\label{defrho}
\end{equation} 
where $\lVert\cdot\rVert_{p}$ is the usual $p$ norm for matrices and $D_{n}^{+}$ are the diagonal $n\times n$ matrices with positive eigenvalues.
\vspace{\baselineskip}\\
The case of the $C^{1}$ norm for systems with no source term has also been treated in \cite{CoronC1} by Jean-Michel Coron and Georges Bastin by a Lyapunov approach that inspired the first part of this paper. There, they proved the following sufficient condition for exponential stability through a Lyapunov approach:
\begin{equation}
\rho_{\infty}(G'(0))<1.
\end{equation} 
However the general case with a non-zero source term changes several things. As mentioned previously 
%First of all 
it implies that the steady-states $\mathbf{Y}^{*}$ are no longer necessarily uniform and as a direct consequence the matrix 
%$E$ and 
$A$ defined in 
%(\ref{defm}) and 
(\ref{defA}) depend explicitly not only on $\mathbf{u}$ 
but also on $x$. In addition, 
there are some cases where, for any $G$, no basic quadratic $H^{2}$ Lyapunov function can be found (see for instance \cite{Coron1D} and in particular Proposition 5.12) or no basic $C^{1}$ Lyapunov function can be found, as shown later on.
%it is no longer always possible to find constant coefficients in the Lyapunov function, as we will see later on, which implies that an interior condition linked with the size of the domain will appear in addition to the boundary conditions (see condition (\ref{intex}) or Theorem \ref{resultat2} for instance). 

\paragraph{General hyperbolic system with non-zero source term in the $H^{p}$ norm}
For general quasilinear hyperbolic systems with source term, also called inhomogeneous quasilinear hyperbolic systems, the analysis of the exponential stability is much less advanced and actual knowledge in the matter is still partial.
To our knowledge the exponential stability of such systems with non zero and non negligible source term was only treated in the framework of the $H^{p}$ norm for $p\in\mathbb{N}\setminus\{0,1\}$
and in \cite{Coron1D} (in Chapter 6) the authors find a sufficient (but $a$ $priori$ non-necessary) condition: exponential stability of the system (\ref{system21})--(\ref{Well-posedness}) for the $H^{p}$ norm where $p\geq2$ is achieved if there exists $Q\in C^{1}([0,L],D_{n}^{+})$ such that the two following conditions hold:
\begin{itemize}
\item (Interior condition) the matrix
\begin{equation}
-(Q\Lambda)'(x)+Q(x)M(\mathbf{0},x)+M(\mathbf{0},x)^{T}Q(x)^{T}
\label{intex}
 \end{equation} 
 is positive definite for all $x\in[0,L]$,
  \item (Boundary conditions) the matrix
 \begin{equation}
 \left(\begin{smallmatrix}
  \Lambda_{+}(L)Q_{+}(L)&0\\ 0 & -\Lambda_{-}(0)Q_{-}(0)
 \end{smallmatrix}\right)
-K^{T}\left(\begin{smallmatrix}
  \Lambda_{+}(0)Q_{+}(0)&0\\ 0 & -\Lambda_{-}(L)Q_{-}(L)     
      \end{smallmatrix}\right)K\\
      \label{boundex}
      \end{equation}
is positive semi-definite
\end{itemize}
\vspace{\baselineskip}
where $M(\mathbf{0},\cdot)=\frac{\partial B}{\partial \mathbf{u}}(\mathbf{0},\cdot)$ and $K=G'(\mathbf{0})$.\\

It has to be underlined that
with a non-zero source term in there does not always exist a simple quadratic Lyapunov function ensuring exponential stability for the $H^{p}$ norm whatever the boundary conditions are. Thus appears not only a boundary condition (\ref{boundex}) as in the previous paragraph but also an interior condition (\ref{intex}).

This phenomenon
is not specific to non-linear systems but also appears in linear systems: In \cite{CoronBastin22} for instance, the authors study a linear $2\times2$ system and found a necessary and sufficient condition for the existence of $Q$ such that (\ref{intex}) hold.
%A checker J
In general for linear hyperbolic systems the condition (\ref{intex}) also appears although it is only sufficient when $n>2$.
This is the consequence of the non-uniformity of the steady-states combined with non-identically vanishing zero order term even close to the steady states. 
If this phenomenon is not new, we will see however that the interior condition that appears for the $C^{1}$ norm is different from the condition that typically appears when studying Lyapunov functions for $H^{p}$ norms.
\vspace{\baselineskip}\\
Our contribution in this article is to deal with the exponential stability for the $C^{1}$ norm of such general hyperbolic systems with source term. This article intends to give a necessary and sufficient interior condition to the existence of a simple quadratic Lyapunov function ensuring exponential stability in the $C^{1}$ (and actually $C^{p}$) norm of the system and a sufficient condition on the boundary conditions.\\

\paragraph{Useful observations and notations}
Before going any further let us note that by definition of $B$ and as $\mathbf{Y}^{*}$ is a steady-state
\begin{equation}
B(\mathbf{0},x)=N(0)(F(\mathbf{Y}^{*})(\mathbf{Y}^{*}_{x})+D(\mathbf{Y}^{*}))=0.
\end{equation} 
Thus 
%as
if we assume that
\textcolor{black}{$F$ and $Y^{*}$ are $C^{3}$ functions, then, from (\ref{defB}),}
 B is $C^{2}$ and there exists $\eta_{0}>0$ and $M\in C^{1}(\mathcal{B}_{\eta_{0}}\times[0,L],\mathcal{M}_{n}(\mathbb{R}))$, 
where $\mathcal{B}_{\eta_{0}}$ is the ball of radius $\eta_{0}$ in the space of continuous function endowed with the $L^{\infty}$ topology,
such that,
\begin{equation}
\begin{split}
B(\mathbf{u},x)=M(\mathbf{u},x)\mathbf{u},&
\\
\text{and therefore, }\frac{\partial B}{\partial  \mathbf{u}}(\mathbf{0},x)=M(\mathbf{0}&,x).
\label{M2}
\end{split}
\end{equation}
\textcolor{black}{Besides, $A$ is also a $C^{2}$ function and $\eta_{0}>0$ can be chosen small enough such that there exists $E
%%=(m_{ij})_{(i,j)\in[1,n]}
\in C^{2}(\mathcal{B}_{\eta_{0}}\times[0,L],\mathcal{M}_{n}(\mathbb{R}))$, 
%Mwhere $\mathcal{B}_{\eta_{0}}\subset\mathbb{R}^{n}$ is the ball of radius $\eta_{0}$, 
satisfying (see \cite{Coron1D} in particular Lemma 6.7),
\begin{gather}
E(\mathbf{u},x)A(\mathbf{u},x)=\lambda(\mathbf{u},x)E(\mathbf{u},x) \text{ }\forall\text{  }(\mathbf{u},x)\in\mathcal{B}_{\eta_{0}}\times[0,L],
\label{defm}\\
\text{and } E(\mathbf{0},x)=Id,
\label{condId}
\end{gather} 
where $\lambda$ is a diagonal matrix, whose diagonal entries are the eigenvalues of $A(\mathbf{u},x)$.}\\

Also we introduce the following notations:
\begin{defn}
For a $C^{0}$ function $\mathbf{U}=(U_{1},...,U_{n})^{T}$ on $[0,L]$ we define the $C^{0}$ norm $\lvert  \mathbf{U}  \rvert_{0}$ by
\begin{equation}
\lvert \mathbf{U} \rvert_{0}: = \sup_{i}\left(\sup_{[0,L]} (\lvert U_{i}\rvert)\right).
\end{equation}
For a $C^{1}$ function $\mathbf{U}=(U_{1},...,U_{n})^{T}$ on $[0,L]$, we denote similarly the $C^{1}$ norm $\lvert \mathbf{U} \rvert_{1}$ by
\begin{equation}
\lvert \mathbf{U}\rvert_{1} := \lvert \mathbf{U} \rvert_{0}+\lvert \partial_{x} \mathbf{U}\rvert_{0}.
%\lvert \mathbf{u}(t,\cdot) \rvert_{1} := \lvert \mathbf{u}(t,\cdot) \rvert_{0}+\lvert \partial_{x} \mathbf{u}(t,\cdot) \rvert_{0}.
\end{equation} 
\end{defn}
In the following for a $C^{1}$ function $\mathbf{u}$ on $[0,T]\times[0,L]$, we will sometimes note for simplicity $\lvert \mathbf{u}\rvert_{0}$
%for a $C^{1}$ function $\mathbf{u}$ on $[0,L]\times[0,T]$
instead of $\lvert \mathbf{u}(t,\cdot) \rvert_{0}$ and $\lvert \mathbf{u}\rvert_{1}$ instead of $\lvert \mathbf{u}(t,\cdot) \rvert_{1}$.
%\begin{equation}
%\lvert \mathbf{U} \rvert_{0} = \sup_{i}\left(\sup_{[0,L]} (\lvert U_{i}\rvert)\right).
%\end{equation}
%For a $C^{1}$ function $\mathbf{u}$ on $[0,L]\times[0,T]$, we denote similarly the $C^{0}$ norm $\lvert  \mathbf{u}  \rvert_{0}$ and the $C^{1}$ norm $\lvert \mathbf{u} \rvert_{1}$, that is to say:
%\begin{gather}
%\lvert \mathbf{u}(t,\cdot) \rvert_{0} = \sup_{i}\left(\sup_{[0,L]} (\lvert u_{i}(t,\cdot)\rvert)\right),\\
%\lvert \mathbf{u}(t,\cdot) \rvert_{1} = \lvert \mathbf{u}(t,\cdot) \rvert_{0}+\lvert \partial_{x} \mathbf{u}(t,\cdot) \rvert_{0}+\lvert \partial_{t} \mathbf{u}(t,\cdot) \rvert_{0}.
%\end{gather} 
%\end{defn}
%In the following, for simplicity, we will 
%sometimes
%note 
%%for a $C^{1}$ function $\mathbf{u}$ on $[0,L]\times[0,T]$
%$\lvert \mathbf{u}\rvert_{0}$ instead of $\lvert \mathbf{u}(t,\cdot) \rvert_{0}$ and $\lvert \mathbf{u}\rvert_{1}$ instead of $\lvert \mathbf{u}(t,\cdot) \rvert_{1}$.
\vspace{\baselineskip}\\
We recall the definition of the exponential stability for the $C^{1}$ norm:
\begin{defn}
The steady state \textcolor{black}{$\mathbf{u}^{*}=0$} of the system (\ref{system21}),(\ref{nonlocal}) is exponentially stable for the $C^{1}$ norm if there exist $\gamma>0$, $\eta>0$, and $C>0$ 
such that for every $\mathbf{u}^{0}\in C^{1}([0,L])$ satisfaying the compatibility conditions (\ref{compat1}),(\ref{compat2}) and \textcolor{black}{$\lvert \mathbf{u}^{0}\rvert_{1}\leq\eta$}, the Cauchy problem 
(\ref{system21}),(\ref{nonlocal}),($\mathbf{u}(0,x)=\mathbf{u}^{0}$) has a unique $C^{1}$ solution and
\begin{equation}
%\lvert \mathbf{u}(t,\cdot)-\mathbf{u}^{*}\rvert_{1}\leq C e^{-\gamma t}\lvert \mathbf{u^{0}-\mathbf{u}^{*}} \rvert_{1}, \text{ }\forall t\in[0,+\infty[.
\textcolor{black}{\lvert \mathbf{u}(t,\cdot)\rvert_{1}\leq C e^{-\gamma t}\lvert \mathbf{u}^{0} \rvert_{1}, \text{ }\forall t\in[0,+\infty[.}
\end{equation} 
\end{defn}

\begin{rmk}
Given our change of variable $\mathbf{Y}\rightarrow\mathbf{u}$, proving the exponential stability for the $C^{1}$ \textcolor{black}{norm of} the steady state $0$ of the system (\ref{system21}),(\ref{nonlocal}) is equivalent to proving the and to proving the exponential stability for the $C^{1}$ norm of the steady state $\mathbf{Y}^{*}$ of the system (\ref{system11}) and the associated boundary condition.
% saying that the steady state $0$ of the system (\ref{system21}),(\ref{nonlocal}) is exponentially stable
%for the $C^{1}$ norm is equivalent to saying that the steady state $\mathbf{Y}^{*}$ of the system (\ref{system11}) and the associated boundary condition is exponentially stable for the $C^{1}$ norm.
\end{rmk}

%For a function $\mathbf{U}\in C^{1}([0,L])$ we introduce the following notation: $\mathbf{U}_{t}=-A(\mathbf{U},x)\mathbf{U}_{x}+B(\mathbf{U},x)$. This slight abuse of notation is justified by the fact that when $\mathbf{u}\in C^{1}([0,T]\times[0,L])$ is a solution of the system (\ref{system21})--(\ref{nonlocal}) and $\mathbf{U}$ is defined as $x\rightarrow \mathbf{u}(t,x)$ then with this notation $\mathbf{U}_{t}=\mathbf{u}_{t}$. This enables us to define the following notion of \textit{basic $C^{1}$ Lyapunov function}.

\begin{defn}
We call \textit{basic $C^{1}$ Lyapunov function} a function $V$ defined by
%\begin{equation}
%\begin{split}
%V(\mathbf{U})=&\left| f_{1} \sum\limits_{j=1}^{n} m_{1j}U_{j},..., f_{n}\sum\limits_{j=1}^{n} m_{nj}U_{j}\right|_{0}\\
 %&+\left| f_{1} \sum\limits_{j=1}^{n} m_{1j}U_{jt},..., f_{n}\sum\limits_{j=1}^{n} m_{nj}U_{jt}\right|_{0},
%\end{split}
%\label{eqdefC1}
%\end{equation} 
%\begin{equation}
%\begin{split}
%V(\mathbf{U})=&\left| f_{1} \sum\limits_{j=1}^{n} m_{1j}U_{j},..., f_{n}\sum\limits_{j=1}^{n} m_{nj}U_{j}\right|_{0}\\
% &+\left| f_{1} \sum\limits_{j=1}^{n} m_{1j}(A(\mathbf{U},x)\mathbf{U}_{x}+B(\mathbf{U},x))_{j},..., f_{n}\sum\limits_{j=1}^{n} m_{nj}(A(\mathbf{U},x)\mathbf{U}_{x}+B(\mathbf{U},x))_{j}\right|_{0},
%\end{split}
%\label{eqdefC1}
%\end{equation} 
\begin{equation}
\begin{split}
V(\mathbf{U})=&\left| (\textcolor{black}{\sqrt{f_{1}}} U_{1},..., \textcolor{black}{\sqrt{f_{n}}}U_{n})^{T}\right|_{0}\\
+&\left| \left((\textcolor{black}{E(\mathbf{U},x)}(A(\mathbf{U},x)\mathbf{U}_{x}+B(\mathbf{U},x)))_{1}\textcolor{black}{\sqrt{f_{1}}},..., (\textcolor{black}{E(\mathbf{U},x)}(A(\mathbf{U},x)\mathbf{U}_{x}+B(\mathbf{U},x)))_{n}\textcolor{black}{\sqrt{f_{n}}}\right)^{T}\right|_{0},
\end{split}
\label{eqdefC1}
\end{equation} 
%where $(m_{ij})_{(i,j)\in[1,n]}$ are given by (\ref{defm})-(\ref{condId}),
% and where we define $\dot{\mathbf{U}}=-A(\mathbf{U},x)\mathbf{U}_{x}+B(\mathbf{U},x)$,
for some $(f_{1},...f_{n})\in C^{1}\left([0,L];\mathbb{R}^{*}_{+}\right)^{n}$, such that there exist
$\gamma>0$ and $\eta>0$
%, $\eta$ and $T_{1}>0$
%independent from the solution $\mathbf{u}$, and 
such that for any $T>0$ and any solution $\mathbf{u}$ of the system (\ref{system21})--(\ref{nonlocal})
with $\lvert \mathbf{u}^{0}\rvert_{1}\leq\eta$, 
%\begin{equation}
%\frac{dV(\mathbf{u})}{dt}\leq -\gamma V(\mathbf{u}),
%\label{decroissance1}
%\end{equation} 
\textcolor{black}{
\begin{equation}
 V(t)\leq V(t')e^{-\gamma (t-t')},\text{  }\forall\text{  }0\leq t'\leq t\leq T.
 \label{decroissance1}
\end{equation}}
%in a \textcolor{black}{distributional} sense on $(0,T)$.
%\vspace{\baselineskip}\\
Also, in that case, $(f_{1},...,f_{n})
%\in C^{1}([0,L])^{n}
$
are called coefficients inducing a basic $C^{1}$ Lyapunov function.
\vspace{\baselineskip}\\
%Note that when $\mathbf{u}$ is a solution of the system (\ref{system21})--(\ref{nonlocal}) if $U:x\rightarrow u(t,x)$ for some  $t\geq 0$, then $\dot{\mathbf{U}}=\mathbf{u}_{t}$
\label{defC1}
\end{defn}
%\vspace{\baselineskip}
%\begin{rmk}
%The notation $\mathbf{U}_{t}$ for a $C^{1}([0,L])$ function defined earlier is used so that $V$ is actually only a function on $C^{1}([0,L])$ and to underline that therefore, the function $V(\mathbf{u}):t\rightarrow V(\mathbf{u}(t,\cdot))$ does only depend on the state of the system at time $t$.
%\end{rmk}
\begin{rmk}
Note from (\ref{system21}), that when $\mathbf{u}$ is a solution of the system (\ref{system21}), (\ref{nonlocal}), $V(\mathbf{u}(t,\cdot))$ becomes
\begin{equation}
V(\mathbf{u}(t,\cdot))=\left| (\textcolor{black}{\sqrt{f_{1}}} u_{1},..., \textcolor{black}{\sqrt{f_{n}}}u_{n})^{T}\right|_{0}+\left| (\textcolor{black}{E\mathbf{u}_{t}})_{1}\textcolor{black}{\sqrt{f_{1}}},..., (\textcolor{black}{E\mathbf{u}_{t}})_{n}\textcolor{black}{\sqrt{f_{n}}})^{T}\right|_{0},
\label{Vdef2}
\end{equation}
\textcolor{black}{where we denoted $E=E(\mathbf{u}(t,x),x)$ to lighten the notations.} The previous definition (\ref{eqdefC1}) is used so that $V$ is actually defined as function on $C^{1}([0,L])$ only and to underline that therefore, the function $V(\mathbf{u}):t\rightarrow V(\mathbf{u}(t,\cdot))$ does only depend on the state of the system at time $t$.
\textcolor{black}{Looking at \eqref{Vdef2}, one could wonder why we consider the components of $\mathbf{u}$ while we consider the components of $E\mathbf{u}_{t}$ for the derivative. 
The interest of considering $E\mathbf{u}_{t}$  instead of $\mathbf{u}_{t}$ is that $E$ diagonalizes $A$ and therefore when differentiating the Lyapunov function appears
$2(E\mathbf{u}_{t})_{n}(E(\mathbf{u})_{tt})_{n}=-\lambda_{n}(\mathbf{u},x)((E\mathbf{u}_{tx})_{n}^{2})$ and first order derivative terms, and there is no crossed term of second order derivative which would be impossible to bound with the $C^{1}$ norm
(the full computation is done in Appendix \ref{W2derivative}).
Differentiating $u^{2}_{n}$, though, gives $-\lambda_{n}(u_{n}^{2})_{x}-u_{n}((A-\lambda).\mathbf{u}_{x})_{n}$ and zero order derivative terms, and the second term is a cubic perturbation that can be bounded by the cube of the $C^{1}$ norm.
Nevertheless, the proof would work as well with $E\mathbf{u}$ instead of $\mathbf{u}$, but we consider $\mathbf{u}$ to keep the computations as simple as we can in the main proof (Section \ref{s4}). \textcolor{black}{Finally, we use in the definition \eqref{eqdefC1} the weights $\sqrt{f_{i}}$ instead of using directly the weights $f_{i}$ to be coherent with the existing definition of basic quadratic Lyapunov function for the $L^{2}$ norm introduced in \cite{CoronBastin22} (see in particular (34) ) for linear systems and to facilitate a potential comparison.} }

\end{rmk}
\textcolor{black}{
\begin{rmk}
Note also that, in Definition \ref{defC1}, the condition \eqref{decroissance1} is actually equivalent to the condition
\begin{equation}
\frac{dV(\mathbf{u})}{dt}\leq -\gamma V(\mathbf{u}),
\label{decroissance01}
\end{equation} 
in a distributional sense on $(0,T)$, where we say that $d\geq 0$ in a distributional sense on $(0,T)$ with $d\in \mathcal{D}'(0,T)$ when, for any $\phi\in C^{\infty}_{c}((0,T),\mathbb{R}^{+})$,
\begin{equation}
<d,\phi>\text{ }\geq\text{ }0.
\end{equation} 
\end{rmk}
}
%\textcolor{black}{
% %\begin{rmk}
% Looking at \eqref{Vdef2}, one could wonder why the consider the components of $\mathbf{u}$ while we consider the component of $E\mathbf{u}_{t}$ for the derivative. 
% This is done to keep the computation as simple as we can in the main proof (Section \ref{s4}). The interest of considering $E\mathbf{u}_{t}$  instead of $\mathbf{u}_{t}$ is that $E$ diagonalizes $A$ and therefore when differentiating the Lyapunov function appears
% $2(E\mathbf{u}_{t})_{n}(E(\mathbf{u})_{tt})_{n}=-\lambda_{n}(\mathbf{u},x)((E\mathbf{u}_{t})_{n}^{2})_{x}$ and there is no crossed term of second order derivative which would be impossible to bound with the $C^{1}$ norm
% (the full computation is done in Appendix \ref{W2derivative}).
% Differentiating $u^{2}_{n}$, though, gives $-\lambda_{n}(u_{n}^{2})_{x}-u_{n}((A-\lambda).\mathbf{u}_{x})_{n}$ and the second term is a cubic perturbation that can be bounded by the cube of the $C^{1}$ norm.
% However the proof would work as well with $E\mathbf{u}$ instead of $\mathbf{u}$.
% \end{rmk}}

Note that the existence of such basic $C^{1}$ Lyapunov function for a system guaranties the exponential stability of the system for the $C^{1}$ norm. More precisely we have the following proposition:
%in a robust way, characteristic to the Lyapunov approach. We can state:
\begin{prop}
Let a quasilinear hyperbolic system be of the form (\ref{system21}),(\ref{nonlocal}), with $A$ and $B$ of class 
%$C^{2}$
$C^{1}$ such that there exists a basic $C^{1}$ Lyapunov function, %satisfying Definition \ref{defC1}, 
then the system is exponentially stable for the $C^{1}$ norm.
\label{proposition1}
\end{prop}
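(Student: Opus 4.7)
The plan is to show that the basic $C^1$ Lyapunov function $V$ is equivalent to the $C^1$ norm in a neighborhood of $\mathbf{0}$, and then combine this equivalence with the decay \eqref{decroissance1} and the well-posedness Theorem \ref{resultatLiRaoWang} to obtain global exponential decay by an iteration argument.

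Step 1 (norm equivalence). Since $f_i \in C^1([0,L];\mathbb{R}_+^*)$, there exist $0 < a \leq b$ with $a \leq f_i(x) \leq b$ on $[0,L]$, giving $\sqrt{a}\,|\mathbf{U}|_0 \leq |(\sqrt{f_i}U_i)^T|_0 \leq \sqrt{b}\,|\mathbf{U}|_0$. To analyze the second term in \eqref{eqdefC1}, I would Taylor-expand $E(\mathbf{U},x)(A(\mathbf{U},x)\mathbf{U}_x + B(\mathbf{U},x))$ around $\mathbf{U}=\mathbf{0}$, using the key structural facts $E(\mathbf{0},x) = \mathrm{Id}$, $A(\mathbf{0},x) = \Lambda(x)$ with nowhere vanishing diagonal entries, $B(\mathbf{0},x)=0$, and $B(\mathbf{U},x) = M(\mathbf{U},x)\mathbf{U}$ from \eqref{M2}. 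This yields
\begin{equation}
E(\mathbf{U},x)(A(\mathbf{U},x)\mathbf{U}_x + B(\mathbf{U},x)) = \Lambda(x)\mathbf{U}_x + M(\mathbf{0},x)\mathbf{U} + R(\mathbf{U},\mathbf{U}_x,x),
\end{equation}
with $|R|_0 \leq K|\mathbf{U}|_0(|\mathbf{U}_x|_0 + |\mathbf{U}|_0)$ for $|\mathbf{U}|_0$ small. Setting $c = \min_{i,x} |\Lambda_i(x)| > 0$, I deduce the bilateral estimate $c|\mathbf{U}_x|_0 - C'|\mathbf{U}|_0 \leq |E(A\mathbf{U}_x + B)|_0 \leq C(|\mathbf{U}_x|_0 + |\mathbf{U}|_0)$ for $|\mathbf{U}|_1$ sufficiently small. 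Combined with the first term, an appropriate linear combination produces constants $c_1,c_2,\eta_1>0$ such that
\begin{equation}
c_1 |\mathbf{U}|_1 \leq V(\mathbf{U}) \leq c_2 |\mathbf{U}|_1, \quad \text{whenever } |\mathbf{U}|_1 \leq \eta_1.
\label{equivplan}
\end{equation}

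Step 2 (iteration to global decay). Fix $T>0$ large enough that $(c_2/c_1)e^{-\gamma T} \leq 1/2$, and let $\eta(T)$, $C_1(T)$ be as in Theorem \ref{resultatLiRaoWang}. I choose $\eta := \min\{\eta_1/C_1(T),\,\eta(T),\,\eta_{\mathrm{Lyap}}\}$, where $\eta_{\mathrm{Lyap}}$ is the threshold from Definition \ref{defC1}. For $|\mathbf{u}^0|_1 \leq \eta$ satisfying the compatibility conditions, Theorem \ref{resultatLiRaoWang} gives a solution on $[0,T]$ with $|\mathbf{u}(t)|_1 \leq C_1(T)\eta \leq \eta_1$, so \eqref{equivplan} applies pointwise in $t$. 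Then \eqref{decroissance1} combined with \eqref{equivplan} yields
\begin{equation}
|\mathbf{u}(t)|_1 \leq \tfrac{1}{c_1}V(\mathbf{u}(t,\cdot)) \leq \tfrac{1}{c_1} V(\mathbf{u}^0) e^{-\gamma t} \leq \tfrac{c_2}{c_1}|\mathbf{u}^0|_1 e^{-\gamma t}, \quad t\in[0,T].
\end{equation}
In particular $|\mathbf{u}(T,\cdot)|_1 \leq \tfrac{1}{2}|\mathbf{u}^0|_1 \leq \eta$, so the restarted Cauchy problem (with initial datum $\mathbf{u}(T,\cdot)$, whose compatibility conditions hold automatically because $\mathbf{u}$ is a $C^1$ solution) satisfies the same hypotheses. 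Iterating, I obtain $|\mathbf{u}(kT,\cdot)|_1 \leq 2^{-k}|\mathbf{u}^0|_1$ for every $k \in \mathbb{N}$, and for $t=kT+s$ with $s\in[0,T)$,
\begin{equation}
|\mathbf{u}(t,\cdot)|_1 \leq \tfrac{c_2}{c_1}e^{-\gamma s}\,2^{-k}|\mathbf{u}^0|_1 \leq \tilde{C}\,e^{-\gamma' t}|\mathbf{u}^0|_1
\end{equation}
with $\gamma' := \min(\gamma,\,\ln 2/T)>0$ and $\tilde C := 2c_2/c_1$, which is the desired exponential stability in $C^1$ norm.

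The main obstacle is Step 1: one has to carefully exploit the conditions $E(\mathbf{0},x)=\mathrm{Id}$, $\Lambda_i(x)\neq 0$, and $B(\mathbf{0},x)=0$ to extract a coercive lower bound on $|E(A\mathbf{U}_x+B)|_0$ in terms of $|\mathbf{U}_x|_0$ modulo a controlled $|\mathbf{U}|_0$ remainder, since without the non-vanishing eigenvalue assumption the second piece of $V$ would not control $|\mathbf{U}_x|_0$. The iteration argument in Step 2 is then essentially a routine bootstrap, the only subtlety being that the compatibility conditions propagate in time along $C^1$ solutions.
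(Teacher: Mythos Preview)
Your proof is correct and follows essentially the same two-step strategy as the paper: first establish that $V$ is equivalent to $|\cdot|_1$ on a $C^1$-neighborhood of $\mathbf{0}$, then combine the decay \eqref{decroissance1} with well-posedness and iterate on intervals of length $T$ chosen so that $(c_2/c_1)e^{-\gamma T}\leq 1/2$. The only cosmetic difference is in Step~1: the paper passes through the time derivative, writing the second term of $V$ as $|(\sqrt{f_i}(E\partial_t\mathbf{u})_i)|_0$ for solutions and then relating $|\partial_t\mathbf{u}|_0$ to $|\partial_x\mathbf{u}|_0$ via the equation $\partial_t\mathbf{u}=-A\mathbf{u}_x-B$, whereas you expand $E(A\mathbf{U}_x+B)$ directly from \eqref{eqdefC1}; since these are literally the same expression, the arguments coincide.
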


\begin{proof}[Proof of proposition \ref{proposition1}]
From Theorem \ref{resultatLiRaoWang}, let $T>0$ and $\mathbf{u}_{0}\in C^{1}([0,L],\mathbb{R}^{n})$ satisfying the compatibility conditions (\ref{compat1}) and such that $\lvert \mathbf{u}_{0}\rvert_{1}\textcolor{black}{\leq\min(\eta(T),\eta_{0}/C_{1}(T))}$, where $\eta(T)$ \textcolor{black}{and $C_{1}(T)$ are} given by Theorem \ref{resultatLiRaoWang} \textcolor{black}{and $\eta_{0}$ is given by \eqref{M2}--\eqref{condId}. 
From Theorem \ref{resultatLiRaoWang}} there exists a unique solution $\mathbf{u}\in C^{1}([0,T]\times[0,L])$. Suppose that $V$ is a basic $C^{1}$ Lyapunov function, induced by $(f_{1},...f_{n})$ and $\gamma$ and $\eta_{1}$
%, $\eta$ and $T_{1}$
are the constants associated.
From its definition $V(\mathbf{u}(t,\cdot))$ is closely related to $\lvert \mathbf{u}(t,\cdot) \rvert_{1}$, indeed, using that for all $i\in\{1,n\}$, $f_{i}$ are positive and bounded on $[0,L]$, it is easy to see that there exists a constant $c_{2}>0$ such that
\textcolor{black}{
\begin{equation}
\frac{1}{c_{2}}(\lvert \mathbf{u}(t,\cdot)\rvert_{0}+\lvert \textcolor{black}{E\partial_{t}\mathbf{u}(t,\cdot)}\rvert_{0}) \leq V(\mathbf{u}(t,\cdot))\leq c_{2}(\lvert \mathbf{u}(t,\cdot)\rvert_{0}+\lvert \textcolor{black}{E\partial_{t}\mathbf{u}(t,\cdot)}\rvert_{0}).
\end{equation}
But as, from (\ref{Well-posedness}) and the assumption on $\lvert \mathbf{u}_{0}\rvert_{1}$, $|\mathbf{u}(t,\cdot)|_{1}\leq\eta_{0}$ for any $t\in[0,T]$. Thus from \eqref{defm}--\eqref{condId} there exists a constant $c_{1}$ depending only on $\eta_{0}$ and the system such that
\begin{equation}
\frac{1}{c_{1}}|\partial_{t}\mathbf{u}(t,\cdot)|_{0}\leq |E\partial_{t}\mathbf{u}(t,\cdot)|_{0}\leq c_{1}|\partial_{t}\mathbf{u}(t,\cdot)|_{0},
\end{equation} 
thus,} there exists $c_{0}>0$ such that
\begin{equation}
\frac{1}{c_{0}}(\lvert \mathbf{u}(t,\cdot)\rvert_{0}+\lvert \partial_{t}\mathbf{u}(t,\cdot)\rvert_{0}) \leq V(\mathbf{u}(t,\cdot))\leq c_{0}(\lvert \mathbf{u}(t,\cdot)\rvert_{0}+\lvert \partial_{t}\mathbf{u}(t,\cdot)\rvert_{0}).
\end{equation}
But observe that, as $\mathbf{u}$ is a solution of (\ref{system21}), there exists $\eta_{a}>0$ such that for $\lvert \mathbf{u}(t,\cdot) \rvert_{0}<\eta_{a}$
\begin{equation}
\lvert \partial_{t} \mathbf{u}(t,\cdot) \rvert_{0}\leq 2\sup_{i}\left(\lvert \Lambda_{i}\rvert_{0}\right)\lvert \partial_{x} \mathbf{u}(t,\cdot)\rvert_{0}+2\sup_{i,j}\left(\lvert M_{ij}(\mathbf{0},\cdot)\rvert_{0}\right)\lvert \mathbf{u}(t,\cdot)\rvert_{0},\\
\end{equation}
and similarly
\begin{equation}
\lvert \partial_{x} \mathbf{u}(t,\cdot) \rvert_{0}\leq \frac{2}{\inf_{i,x\in[0,L]}\left(\Lambda_{i}(x)\right)}\left(\lvert\partial_{t} \mathbf{u}(t,\cdot)\rvert_{0}+\sup_{i,j}\left(\lvert M_{ij}(\mathbf{0},\cdot)\rvert_{0}\right)\lvert \mathbf{u}(t,\cdot)\rvert_{0}\right),\\
\end{equation}
which implies that there exists $c>0$ constant such that for $\lvert \mathbf{u}(t,\cdot) \rvert_{0}<\eta_{a}$
\begin{equation}
\frac{1}{c}\lvert \mathbf{u}(t,\cdot) \rvert_{1} \leq V(\mathbf{u}) \leq c\lvert \mathbf{u} (t,\cdot) \rvert_{1}.
\label{encadrement0}
\end{equation} 
%A checker, voir si en appendix.

Let $T\in\mathbb{R}^{*}_{+}$, with $T>0$
%$T>T_{1}$
 and $T$ large enough such that $c^{2}e^{-\gamma T}<\frac{1}{2}$. From (\ref{decroissance1}), for all solution $\mathbf{u}$ such that $\lvert \mathbf{u}^{0}\rvert_{1}<\min(\eta(T),\eta_{1}, \eta_{a}/C(T))$ where $C(T)$ is defined in (\ref{Well-posedness}),
\begin{equation}
V(\mathbf{u},T)\leq V(\mathbf{u},0)e^{-\gamma T}.
\end{equation} 

Now, using (\ref{encadrement0}) we get
\begin{equation}
\lvert \mathbf{u}(T,\cdot) \rvert_{1} \leq \lvert \mathbf{u} (0,\cdot) \rvert_{1} c^{2} e^{-\gamma T},
\end{equation} 
And from the hypothesis on $T$
\begin{equation}
\lvert \mathbf{u}(T,\cdot) \rvert_{1} \leq \frac{1}{2}\lvert \mathbf{u}(0,\cdot) \rvert_{1},
\end{equation} 
and this imply that $\mathbf{u}$ is defined on $[0,+\infty)$ and that we can find $C$ and $\gamma_{1}$ such that 
\begin{equation}
\lvert \mathbf{u}(t,0)-\mathbf{u}^{*}\rvert_{1}\leq C e^{-\gamma t}\lvert \mathbf{u^{0}-\mathbf{u}^{*}} \rvert_{1}, \text{ }\forall t\in[0,+\infty[,
\end{equation}
which gives the exponential stability and concludes the proof.
%And this gives the exponential stability for the $C^{1}$ norm and concludes the proof.
\end{proof}

\section{Main results}

The aim of this article is to show the following results:

\begin{thm}
Let a quasilinear hyperbolic system be of the form (\ref{system21}), (\ref{nonlocal}), with $A$ and $B$ of class 
%$C^{2}$
$C^{1}$, $\Lambda$ defined as in (\ref{Lambda}) and $M$ as in (\ref{M2}). Let assume that the two following properties hold
\begin{enumerate}
\item (Interior condition) the system
\begin{equation}
%f_{i}'=-\frac{2}{\Lambda_{i}}\left(\sum\limits_{k=1,k\neq i}^{n}\lvert M_{ik}(0,x)\rvert \frac{f_{i}^{3/2}}{\sqrt{f_{k}}} - M_{ii}(0,x)f_{i} \right)
\Lambda_{i}f_{i}'\leq -2\left(-M_{ii}(0,x)f_{i} + \sum\limits_{k=1,k\neq i}^{n}\lvert M_{ik}(0,x)\rvert \frac{f_{i}^{3/2}}{\sqrt{f_{k}}} \right),
\label{cond111}
\end{equation} 
admits a solution $(f_{1},...,f_{n})$ on $[0,L]$ such that for all $i\in[1,n]$, $f_{i}>0$,

\item (Boundary conditions) there exists a diagonal matrix $\Delta$ with positive coefficients such that
\begin{equation}
%\rho_{\infty}(G'(0))^{2}<\frac{\inf_{i}\left(\frac{f_{i}(d_{i})}{\Delta_{i}^{2}}\right)}{\sup_{i}\left(\frac{f_{i}(L-d_{i})}{\Delta_{i}^{2}}\right)},
%\sup_{i}(\sum\limits_{i=0}^{n}\lvert K_{ij}\rvert \frac{\Delta_{i}}{\Delta_{j}})<\frac{\inf_{i}\left(\frac{f_{i}(d_{i})}{\Delta_{i}^{2}}\right)}{\sup_{i}\left(\frac{f_{i}(L-d_{i})}{\Delta_{i}^{2}}\right)},
\lVert \Delta G'(0)\Delta^{-1}\rVert_{\infty}<\frac{\inf_{i}\left(\frac{f_{i}(d_{i})}{\Delta_{i}^{2}}\right)}{\sup_{i}\left(\frac{f_{i}(L-d_{i})}{\Delta_{i}^{2}}\right)},
\label{condauxbords}
\end{equation} 
where 
%$\Delta$ is given by the definition of $\rho_{\infty}$ in (\ref{defrho}) and where 
%$d_{i}$ is the output point, i.e. 
$d_{i}=L$ if $\Lambda_{i}>0$ and $d_{i}=0$ otherwise.

\end{enumerate}
Then there exists a basic $C^{1}$ Lyapunov function for the system (\ref{system21}), (\ref{nonlocal}).
\label{resultat1}
\end{thm}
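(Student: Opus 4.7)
The plan is to show directly that the functional $V$ defined in (\ref{eqdefC1}) satisfies (\ref{decroissance1}) along sufficiently small $C^{1}$ solutions. On a solution of (\ref{system21}), (\ref{nonlocal}) write $V(\mathbf{u}(t,\cdot))=W(t)+S(t)$, where $W(t):=\sup_{i,x}\sqrt{f_i(x)}\,|u_i(t,x)|$ and $S(t):=\sup_{i,x}\sqrt{f_i(x)}\,|(E(\mathbf{u},x)\mathbf{u}_t)_i(t,x)|$. The strategy is to bound the upper Dini derivatives of $W$ and $S$ via Danskin's theorem: if the supremum defining $W$ (resp.\ $S$) is attained at $(i_0,x_0)$, then $D^{+}W(t)$ (resp.\ $D^{+}S(t)$) equals the pointwise time derivative of $\sqrt{f_{i_0}}\,|u_{i_0}|$ (resp.\ $\sqrt{f_{i_0}}\,|(E\mathbf{u}_t)_{i_0}|$) at $(t,x_0)$, and I treat interior argmaxima and boundary argmaxima separately.

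For $W$ at an interior argmax $x_0\in(0,L)$, the extremality $\partial_x(\sqrt{f_{i_0}}\,u_{i_0})(t,x_0)=0$ fixes $u_{i_0,x}(t,x_0)=-\tfrac{f_{i_0}'(x_0)}{2f_{i_0}(x_0)}u_{i_0}(t,x_0)$, and inserting this into the $i_0$-th row of (\ref{system21}) together with $A=\Lambda+O(|\mathbf{u}|)$ and the expansion $B(\mathbf{u},x)=M(\mathbf{0},x)\mathbf{u}+O(|\mathbf{u}|^{2})$ from (\ref{M2}) yields
\[
D^{+}W(t)\leq\Big[\frac{\Lambda_{i_0}f_{i_0}'}{2f_{i_0}}-M_{i_0 i_0}(\mathbf{0},x_0)+\sum_{k\neq i_0}|M_{i_0 k}(\mathbf{0},x_0)|\sqrt{\tfrac{f_{i_0}(x_0)}{f_k(x_0)}}\Big]W(t)+C_1|\mathbf{u}|_1^{2}.
\]
By (\ref{cond111}) the bracket is $\leq 0$, and since both (\ref{cond111}) and (\ref{condauxbords}) are open, a slight perturbation of the $f_i$ makes the bracket $\leq -\gamma_1<0$ while still verifying (\ref{condauxbords}). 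At an outgoing boundary argmax ($x_0=d_{i_0}$) the one-sided sign of $\partial_x(\sqrt{f_{i_0}}u_{i_0})$ combined with the sign of $\Lambda_{i_0}$ makes the transport term favourable, and the same estimate holds. If instead $x_0$ is an incoming boundary ($x_0=L-d_{i_0}$), the linearisation of (\ref{nonlocal}) around $\mathbf{0}$ gives $u_{i_0}(t,x_0)=\sum_j K_{i_0 j}u_j(t,d_j)+O(|\mathbf{u}|_0^{2})$, whence
\[
W(t)\leq \Big(\sum_j |K_{i_0 j}|\sqrt{\tfrac{f_{i_0}(L-d_{i_0})}{f_j(d_j)}}\Big)W(t)+O(|\mathbf{u}|_0^{2}).
\]
Rewriting each ratio as $(\Delta_{i_0}/\Delta_j)\sqrt{(f_{i_0}(L-d_{i_0})/\Delta_{i_0}^{2})/(f_j(d_j)/\Delta_j^{2})}$ and combining $\sum_j(\Delta_{i_0}/\Delta_j)|K_{i_0 j}|\leq\|\Delta G'(\mathbf{0})\Delta^{-1}\|_\infty$ with (\ref{condauxbords}) yields a contraction factor strictly below $1$, so an incoming-boundary argmax forces $W(t)$ to be of quadratic order, which (for $|\mathbf{u}|_1$ small enough) rules out this case and leaves only the interior/outgoing estimate $D^{+}W\leq -\gamma_1 W+C|\mathbf{u}|_1^{2}$.

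For $S$, set $\mathbf{y}:=E(\mathbf{u},x)\mathbf{u}_t$. Differentiating (\ref{system21}) in $t$ yields $\mathbf{u}_{tt}+A\mathbf{u}_{tx}+M(\mathbf{0},x)\mathbf{u}_t=\mathcal{N}_1$ with $\mathcal{N}_1$ quadratic in $(\mathbf{u},\mathbf{u}_x,\mathbf{u}_t)$; left-multiplying by $E$, using $EA=\lambda E$ from (\ref{defm}) and $E(\mathbf{0},x)=\mathrm{Id}$ from (\ref{condId}) (so that $\partial_x E$ and $\partial_t E$ vanish at $\mathbf{u}=\mathbf{0}$ and every commutator of $E$ with $\partial_t$ produces only quadratic errors) converts the equation into
\[
\mathbf{y}_t+\lambda(\mathbf{u},x)\mathbf{y}_x+M(\mathbf{0},x)\mathbf{y}=\mathcal{N}_2,\qquad \mathcal{N}_2=O(|\mathbf{u}|_1|\mathbf{u}_t|_0),
\]
which is (to leading order) the same linear system as for $\mathbf{u}$. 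Differentiating (\ref{nonlocal}) in $t$ and using $E(\mathbf{0},x)=\mathrm{Id}$ gives the analogous linearised boundary relation $\mathbf{y}^{\mathrm{in}}(t)=G'(\mathbf{0})\mathbf{y}^{\mathrm{out}}(t)+\mathrm{NL}$. The Dini analysis carried out for $W$ then applies verbatim to $S$, giving $D^{+}S\leq -\gamma_2 S+C'(|\mathbf{u}|_1^{2}+|\mathbf{u}|_1|\mathbf{u}_t|_0)$.

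Summing the two bounds and invoking the equivalence $V\sim|\mathbf{u}|_1$ established in the proof of Proposition \ref{proposition1} produces $D^{+}V\leq -\gamma V+KV^{2}$; choosing $\eta$ small enough that $KV\leq\gamma/2$ along the trajectory yields $D^{+}V\leq -(\gamma/2)V$, hence (\ref{decroissance1}). I expect the main technical obstacles to be (i) the bookkeeping of nonlinear remainders when commuting $E(\mathbf{u},x)$ past $\partial_t$ in the equation for $\mathbf{y}$, verifying that every corrective term is genuinely quadratic (which hinges crucially on $E(\mathbf{0},x)=\mathrm{Id}$), and (ii) the algebraic step that turns the $\Delta$-rescaled condition (\ref{condauxbords}) into the pointwise contraction $\sum_j |K_{i_0 j}|\sqrt{f_{i_0}(L-d_{i_0})/f_j(d_j)}<1$ used in the boundary-maximum case for both $W$ and $S$.
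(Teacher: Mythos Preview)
Your approach is genuinely different from the paper's and is essentially sound, so a brief comparison is warranted.

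\textbf{What the paper does.} The paper does \emph{not} differentiate $V$ directly. Instead it introduces the weighted $L^{2p}$ functionals
\[
W_{1,p}=\Big(\int_0^L\sum_i f_i(x)^p u_i^{2p}e^{-2p\mu s_i x}\,dx\Big)^{1/2p},\qquad
W_{2,p}=\Big(\int_0^L\sum_i f_i(x)^p (E\mathbf{u}_t)_i^{2p}e^{-2p\mu s_i x}\,dx\Big)^{1/2p},
\]
differentiates them along $C^2$ solutions, integrates by parts to split off a boundary term $I_2$ and an interior term $I_3$, uses a purely algebraic lemma (Lemma~\ref{lemma}) to show that the interior condition (\ref{cond111}) forces $I_3\ge0$ for all large $p$, and uses (\ref{condauxbords}) together with the extra exponential factor $e^{-2p\mu s_i x}$ to make $I_2>0$. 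This yields $\tfrac{d}{dt}(W_{1,p}+W_{2,p})\le -\tfrac{\mu\alpha_0}{2}(W_{1,p}+W_{2,p})+C(W_{1,p}+W_{2,p})|\mathbf{u}|_1$ uniformly in $p$, and letting $p\to\infty$ recovers the decay of $V$ (with weights $\sqrt{f_i}\,e^{-\mu s_i x}$, not just $\sqrt{f_i}$). The $L^{2p}$ device buys two things: the integration by parts cleanly separates interior from boundary without any case analysis on argmax location, and the exponential weight $e^{-\mu s_i x}$ supplies the strict decay margin automatically.

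\textbf{What you do differently.} You bypass the $L^{2p}$ limit entirely and attack $V$ via upper Dini derivatives and Danskin's formula, splitting according to whether the argmax lies in the interior, at an outgoing endpoint, or at an incoming endpoint. This is more elementary and closer in spirit to classical comparison-principle arguments. Your interior and outgoing-endpoint estimates are correct and reproduce exactly the bracket controlled by (\ref{cond111}); your derivation of the diagonal equation for $\mathbf{y}=E\mathbf{u}_t$ matches the paper's Appendix~\ref{W2derivative}.

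\textbf{Two points to tighten.} First, (\ref{cond111}) is a non-strict inequality, so calling it ``open'' is imprecise; the paper obtains the strict margin by perturbing to $f_{i,\sigma}$ via an ODE with an added positive constant and invoking continuous dependence --- you should do the same rather than appeal to openness. Second, your boundary step hinges on the contraction $\sum_j|K_{i_0j}|\sqrt{f_{i_0}(L-d_{i_0})/f_j(d_j)}<1$. From (\ref{condauxbords}) one gets this quantity $\le \|\Delta K\Delta^{-1}\|_\infty\cdot r^{-1/2}$ with $r=\inf_i(f_i(d_i)/\Delta_i^2)/\sup_i(f_i(L-d_i)/\Delta_i^2)$, which is $<1$ only if $\|\Delta K\Delta^{-1}\|_\infty<r^{1/2}$. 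The paper's own boundary estimate in fact uses $\theta^2<r$ (see the line ``from (\ref{condauxbords}) we have $\theta^2<\ldots$'' in the proof), and it secures the needed slack by the extra weights $e^{-\mu s_i x}$; in your framework this amounts to replacing $f_i$ by $f_ie^{-2\mu s_i x}$ for small $\mu$, which preserves a strict version of (\ref{cond111}) and makes the contraction genuine. With that adjustment your route goes through.
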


\begin{rmk}
Note that when $M\equiv0$ we recover the result found in \cite{CoronC1} in the absence of source term: the interior condition is always verified 
by any positive constant functions $(f_{1},...,f_{n})$ and when choosing $f_{i}=\Delta_{i}^{2}$ the boundary condition reduces to the existence of $\Delta\in D_{+}^{n}$ such that 
$\lVert \Delta G'(0)\Delta^{-1}\rVert_{\infty}<1$ which is equivalent to $\rho_{\infty}(G'(0))<1$.\\

Note also that the existence of a solution $(f_{1},...f_{n})$ with $f_{i}>0$ on $[0,L]$ for all $i\in\{1,...,n\}$ for the system
\begin{equation}
f_{i}'=-\frac{2}{\Lambda_{i}}\left(- M_{ii}(0,x)f_{i} +\sum\limits_{k=1,k\neq i}^{n}\lvert M_{ik}(0,x)\rvert \frac{f_{i}^{3/2}}{\sqrt{f_{k}}} \right)
%\Lambda_{i}f_{i}\leq-2\left(\sum\limits_{k=1,k\neq i}^{n}\lvert M_{ik}(0,x)\rvert \frac{f_{i}^{3/2}}{\sqrt{f_{k}}} - M_{ii}(0,x)f_{i} \right)
\end{equation} 
is also a sufficient interior condition as it obviously implies the existence of a solution with positive components for (\ref{cond111}).
\end{rmk}
\vspace{\baselineskip}
Moreover, we show in the following Theorem that condition (\ref{cond111}) is also necessary in order to ensure the existence of a basic $C^{1}$ Lyapunov function.
\begin{thm}
Let a quasilinear hyperbolic system be of the form (\ref{system21}) with $A$ and $B$ of class $C^{3}$, there exists a control of the form (\ref{nonlocal}) such that there exists a basic $C^{1}$ Lyapunov function for the system (\ref{system21}),(\ref{nonlocal}) if and only if
\begin{equation}
%-\Lambda_{i}f_{i}'\geq 2\left(\sum\limits_{k=1,k\neq i}^{n}\lvert M_{ik}(0,x)\rvert \frac{f_{i}^{3/2}}{\sqrt{f_{k}}} - M_{ii}(0,x)f_{i} \right)
\Lambda_{i}f_{i}'\leq -2\left(\sum\limits_{k=1,k\neq i}^{n}\lvert M_{ik}(0,x)\rvert \frac{f_{i}^{3/2}}{\sqrt{f_{k}}} - M_{ii}(0,x)f_{i} \right),
\label{cond122}
\end{equation} 
admits a solution $(f_{1},...,f_{n})$ on $[0,L]$ such that for all $i\in[1,n]$, $f_{i}>0$.
\label{resultat2}
\end{thm}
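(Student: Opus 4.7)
The equivalence is proved in two directions. For the sufficient direction ($\Leftarrow$), given a positive solution $(f_1,\ldots,f_n)$ of \eqref{cond122}, pick any boundary map $G$ satisfying $G(\mathbf{0}) = \mathbf{0}$ and $G'(\mathbf{0}) = 0$ (for instance, the linear Dirichlet feedback $G \equiv 0$ on a neighborhood of $\mathbf{0}$, giving the boundary condition $\mathbf{u}_+(t,0) = 0$ and $\mathbf{u}_-(t,L) = 0$). Then for any $\Delta \in D_n^+$ one has $\|\Delta G'(\mathbf{0}) \Delta^{-1}\|_\infty = 0$, which is strictly less than the positive right-hand side of \eqref{condauxbords}. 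Both hypotheses of Theorem \ref{resultat1} are therefore met and a basic $C^1$ Lyapunov function exists for the resulting system.

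For the necessary direction ($\Rightarrow$), suppose $V$ is a basic $C^1$ Lyapunov function with coefficients $(f_1,\ldots,f_n)$, decay rate $\gamma > 0$, and smallness threshold $\eta$, for the system under some control $G$. The strategy is to probe $V$ with a one-parameter family of localized initial perturbations that simultaneously saturate the weighted-max constraint defining $V_1$ at a chosen point $x_0 \in (0,L)$ and align the signs of the couplings to produce the worst possible growth rate along the $i_0$-th characteristic. Concretely, fix $i_0 \in \{1,\ldots,n\}$ and $x_0 \in (0,L)$ (the endpoints will follow by continuity of $f_i$), define $\mathbf{a}\in\mathbb{R}^n$ by $a_{i_0}=1$ and $a_k = -\sgn(M_{i_0 k}(\mathbf{0},x_0))\sqrt{f_{i_0}(x_0)/f_k(x_0)}$ for $k\ne i_0$, and choose a bump $\chi\in C^\infty_c(\mathbb{R})$ with $\chi(0)=1$, $\chi'(0)=0$, and support in $(-\delta,\delta)$ with $\delta<\min(x_0,L-x_0)$. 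For small $\varepsilon > 0$, consider the initial data $\mathbf{u}^0_\varepsilon(x) = \varepsilon\,\chi(x - x_0)\mathbf{a}$; the compatibility conditions \eqref{compat1}--\eqref{compat2} hold trivially since $\mathbf{u}^0_\varepsilon$ vanishes near both endpoints, irrespective of $G$.

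Denote by $\mathbf{u}_\varepsilon$ the resulting solution (which exists on some fixed $[0,T]\times[0,L]$ for all small $\varepsilon$ by Theorem \ref{resultatLiRaoWang}) and by $x_{i_0}(t)$ the $i_0$-th characteristic through $x_0$. Using \eqref{system21}, $B(\mathbf{u},x) = M(\mathbf{u},x)\mathbf{u}$, and the fact that $\partial_x\mathbf{u}^0_\varepsilon(x_0) = 0$, a direct computation gives
\[
\frac{d}{dt}\bigg|_{t=0}\!\sqrt{f_{i_0}(x_{i_0}(t))}\,u_{\varepsilon,i_0}(t,x_{i_0}(t)) = \varepsilon\sqrt{f_{i_0}(x_0)}\!\left[\frac{\Lambda_{i_0}f_{i_0}'}{2f_{i_0}} - M_{i_0 i_0} + \sum_{k\ne i_0}|M_{i_0 k}|\sqrt{\tfrac{f_{i_0}}{f_k}}\right]_{\!\!x_0}\!\! + O(\varepsilon^2),
\]
where the sign choice for $\mathbf{a}$ has turned the coupling sum into the absolute-value expression. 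Since the first component $V_1(\mathbf{u}_\varepsilon(t,\cdot)) \geq \sqrt{f_{i_0}(x_{i_0}(t))}|u_{\varepsilon,i_0}(t,x_{i_0}(t))|$ and $V_1(\mathbf{u}^0_\varepsilon) = \varepsilon\sqrt{f_{i_0}(x_0)}$ is attained at $(i_0,x_0)$, combining with the decay \eqref{decroissance1}, dividing by $\varepsilon$, and sending $\varepsilon \to 0^+$ yields \eqref{cond122} pointwise at $x_0$.

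The main technical difficulty lies in the non-smooth, two-component structure of $V = V_1 + V_2$: the supremum defining $V_1$ is attained simultaneously at $(k,x_0)$ for every $k$ with $a_k \ne 0$, and the derivative component $V_2$ contributes at the same order $O(\varepsilon)$ (with $\chi'(0) = 0$ one has $\partial_t\mathbf{u}^0_\varepsilon(x_0) = -B(\varepsilon\mathbf{a},x_0) = O(\varepsilon)$). Extracting the desired bound on $V_1$ alone from the joint decay $V(t) \le V(0)e^{-\gamma t}$ therefore requires a standard envelope / Dini-derivative argument for suprema of smooth functions, together with careful bookkeeping showing that the $V_2$ contribution at $t = 0$ cannot absorb the growth rate produced by the $V_1$ trajectory along the $i_0$-characteristic. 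This envelope step is the substantive technical obstacle; once handled, the pointwise inequality at $x_0$ is immediate, and varying $i_0$ and $x_0$ yields \eqref{cond122} on all of $[0,L]$.
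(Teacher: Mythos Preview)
Your sufficient direction is exactly the paper's: take $G\equiv0$ and invoke Theorem \ref{resultat1}.

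In the necessary direction, however, there is a genuine gap. You correctly identify the obstacle---the two-component structure $V=V_{1}+V_{2}$ and the fact that with your choice of $\mathbf{a}$ the supremum in $V_{1}$ is attained simultaneously at every index $k$---but you do not resolve it. The decay hypothesis \eqref{decroissance1} constrains only the sum $V_{1}+V_{2}$, so the lower bound you derive on the growth of $\sqrt{f_{i_{0}}}|u_{i_{0}}|$ along one characteristic gives no contradiction by itself: $V_{2}$ is $O(\varepsilon)$ (same order as $V_{1}$), and its Dini derivative at $t=0$ involves $\partial_{tt}\mathbf{u}(0,x_{0})$, which through $A\partial_{xx}\mathbf{u}^{0}$ brings in an uncontrolled $\varepsilon\chi''(0)$ term of the same order. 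The ``envelope / Dini-derivative argument'' and the claim that ``$V_{2}$ cannot absorb the growth'' are asserted, not proved, and with your construction there is no mechanism making either one tractable.

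The paper's proof takes a different route precisely to avoid this. It uses initial data of the form $u_{i}(0,x)=\dfrac{u_{i}^{0}}{m}\chi(x)\dfrac{e^{-m(x-x_{0})}}{\lambda_{i}\sqrt{f_{i}}}$ with a large parameter $m$ and a carefully designed bump $\chi=\phi(m(x-x_{0}))$. The exponential factor forces $|\partial_{t}\mathbf{u}(0,\cdot)|\sim m|\mathbf{u}(0,\cdot)|$, so the \emph{derivative} component $V_{2}$ dominates, not $V_{1}$. A specific $\psi_{0}$ (constructed in the appendix) guarantees that the weighted maximum in $V_{2}$ is attained at a single point $x_{1}$ near $x_{0}$ and at the single index $i_{0}$; by the implicit function theorem this makes $V_{2}$ genuinely $C^{1}$ in $t$ near $0$. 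One then computes $\dfrac{dV_{2}}{dt}(0)$ exactly and finds it positive under the negation of \eqref{cond122}, while $\dfrac{dV_{1}}{dt}(0)=O(1/m)$ is shown to be negligible. Sending $m\to\infty$ yields the contradiction. The parameter $m$ is the missing ingredient in your approach: it separates the scales of $V_{1}$ and $V_{2}$ and forces uniqueness of the maximizer, the two issues you flagged but left open.
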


\begin{rmk}
%Note that Theorem \ref{resultat2} underlines the precision of (\ref{cond111}) by showing that it is a necessary condition.
Note that Theorem \ref{resultat2} illustrates the sharpness of (\ref{cond111}) by showing that it is a necessary condition.
This is not trivial as, to our knowledge, there is no similar condition for the $H^{p}$ norm when $n>2$ yet.
% (when $n=2$ a sufficient and necessary condition for the H^{2} norm can be found in \cite{CoronBastin22})
%A checker 1
%the similar condition for the $H^{p}$ norm given in (\ref{intex}) is $a$ $priori$ only sufficient to our knowledge.
Note also that we have not imposed anything on the initial values of the $(f_{1},...,f_{n})$
but we see from Theorem \ref{resultat1} and (\ref{condauxbords}) that the more liberty we give them, the more restrictive the condition on the boundary (\ref{condauxbords}) might become.
\end{rmk}
The proof of these two results is given in the next section. 
\vspace{\baselineskip}
\section{$\mathbf{C^{1}}$ Lyapunov stability of $n\times n$ quasilinear hyperbolic system}

In this Section we shall prove Theorem \ref{resultat1} and Theorem \ref{resultat2}. We will first start by proving the following Lemma \textcolor{black}{which will be useful 
for finding the interior condition in the proof of Theorem \ref{resultat1} and for proving Theorem \ref{resultat2}}:

\begin{lem}
Let $(a_{i},b_{ij})_{(i,j)\in\llbracket1,n\rrbracket^{2}}\in C([0,L],\mathbb{R})^{n}\times C([0,L],\mathbb{R})^{n^{2}}$,
\vspace{\baselineskip}\\
If
\begin{equation}
\begin{split}
(i)\text{ }\exists p_{1}\in\mathbb{N}^{*}:
\sum\limits_{i=1}^{n} \left(a_{i}(x)y_{i}^{2p} + \sum\limits_{j=1}^{n} b_{ij}(x)y_{i}^{2p-1}y_{j}\right) > 0,\text{ }\forall p>p_{1},\forall y\in\mathbb{R}^{n}\setminus \{0\},\forall x\in[0,L],
\end{split}
\label{lemma1}
\end{equation} 
then
\begin{equation}
(ii)\text{ }a_{i}(x)\geq \sum\limits_{j=1,j\neq i}^{n}\lvert b_{ij}(x)\rvert-b_{ii}(x),\text{ }\forall i\in[1,n],\forall x\in[0,L].
\end{equation} 
And if
\begin{equation}
(iii)\text{ }a_{i}(x)> \sum\limits_{j=1,j\neq i}^{n}\lvert b_{ij}(x)\rvert-b_{ii}(x),\forall i\in[1,n], \forall x\in[0,L],
\label{hypo3}
\end{equation} 
then $(i)$ holds.
\label{lemma}
\end{lem}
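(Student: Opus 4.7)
My plan is to split the expression appearing in (\ref{lemma1}) into its pure diagonal part and its off-diagonal cross-term part, writing
\begin{equation*}
S_p(y,x) := \sum_{i=1}^n \bigl(a_i(x)+b_{ii}(x)\bigr)\, y_i^{2p} + \sum_{i=1}^n \sum_{j\neq i} b_{ij}(x)\, y_i^{2p-1} y_j.
\end{equation*}
In this form, both $(ii)$ and $(iii)$ read as weak, respectively strict, generalized diagonal-dominance-type conditions comparing $a_i+b_{ii}$ to the off-diagonal row sum $\sum_{j\neq i}|b_{ij}|$. The whole proof then rests on the observation that for large $p$, a component $y_i$ with $|y_i|<\max_j|y_j|$ is strongly damped by the factors $|y_i|^{2p-1}$ and $|y_i|^{2p}$, so that $S_p$ is essentially controlled row by row.

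For $(iii)\Rightarrow(i)$, I would apply Young's inequality with conjugate exponents $\frac{2p}{2p-1}$ and $2p$, which bounds every off-diagonal cross term via $|y_i|^{2p-1}|y_j|\leq \tfrac{2p-1}{2p}|y_i|^{2p}+\tfrac{1}{2p}|y_j|^{2p}$. After summing the inequality over the $(i,j)$ with $j\neq i$ and relabeling the second sum, the whole quantity becomes bounded below by
\begin{equation*}
S_p(y,x) \geq \sum_{i=1}^n |y_i|^{2p}\left[(a_i+b_{ii})(x) - \tfrac{2p-1}{2p}\sum_{j\neq i}|b_{ij}(x)| - \tfrac{1}{2p}\sum_{j\neq i}|b_{ji}(x)|\right].
\end{equation*}
Continuity of the $a_i,b_{ij}$ on the compact set $[0,L]$ together with $(iii)$ produces a uniform $\delta>0$ with $(a_i+b_{ii})(x)-\sum_{j\neq i}|b_{ij}(x)|\geq \delta$; choosing $p_1$ so large that $\frac{1}{2p}\sum_{j\neq i}|b_{ji}(x)|\leq \delta/2$ uniformly in $x$ for every $p>p_1$ then makes the bracket $\geq \delta/2$, whence $S_p(y,x)\geq \frac{\delta}{2}\sum_i|y_i|^{2p}>0$ whenever $y\neq 0$, which is exactly $(i)$.

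For $(i)\Rightarrow(ii)$, I would argue by contrapositive. If $(ii)$ fails at some $i_0$ and $x_0\in[0,L]$, so that $(a_{i_0}+b_{i_0 i_0})(x_0) < \sum_{j\neq i_0}|b_{i_0 j}(x_0)|$, I would test $S_p$ at $x_0$ against the vector $y$ defined by $y_{i_0}=1$ and $y_j = -\epsilon\,\sgn\bigl(b_{i_0 j}(x_0)\bigr)$ for $j\neq i_0$, with a parameter $\epsilon\in(0,1)$ to be tuned. Direct inspection then shows that the $i=i_0$ diagonal term contributes exactly $(a_{i_0}+b_{i_0 i_0})(x_0)$, the $i=i_0$ off-diagonal cross terms contribute exactly $-\epsilon\sum_{j\neq i_0}|b_{i_0 j}(x_0)|$, and every remaining term carries a factor $\epsilon^{2p}$ or $\epsilon^{2p-1}$, hence tends to $0$ as $p\to\infty$. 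Consequently $S_p(y,x_0)$ converges to $(a_{i_0}+b_{i_0 i_0})(x_0) - \epsilon\sum_{j\neq i_0}|b_{i_0 j}(x_0)|$, which becomes strictly negative for $\epsilon$ chosen close enough to $1$, contradicting $(i)$.

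The main obstacle lies in this last construction: the test vector must satisfy three competing requirements — the components $y_j$ for $j\neq i_0$ must obey $|y_j|<1$ so that their $2p$-th powers can be absorbed, their signs must be chosen opposite to those of $b_{i_0 j}(x_0)$ so as to maximize the negative cross-contribution, and the parameter $\epsilon$ must be pushed arbitrarily close to $1$ so as to recover the full row sum $\sum_{j\neq i_0}|b_{i_0 j}|$. This unavoidable gap between $\epsilon$ and $1$ is precisely why the conclusion $(ii)$ only carries a weak inequality, whereas the hypothesis $(iii)$ must be strict for $(i)$ to follow.
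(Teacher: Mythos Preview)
Your proof is correct. For $(i)\Rightarrow(ii)$ your argument is essentially identical to the paper's: both plug in the test vector $y_{i_0}=1$, $y_j=-\epsilon\,\sgn(b_{i_0 j}(x_0))$ with $\epsilon\in(0,1)$, let $p\to\infty$ to kill the contributions from indices $i\neq i_0$, and then push $\epsilon\to 1$; the only cosmetic difference is that you phrase it as a contrapositive while the paper writes it as a direct limit yielding the weak inequality.

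For $(iii)\Rightarrow(i)$ your route genuinely differs from the paper's. The paper normalizes by the largest coordinate $|y_{i_1}|=\max_i|y_i|$, isolates the $i_1$-row contribution as $\geq d_{i_1}^{(0)}>0$, and then controls the remaining rows via the explicit one-variable bound $\max_{z\in[0,1]}(1-z)z^{2p-1}=O(1/p)$. You instead apply Young's inequality $|y_i|^{2p-1}|y_j|\leq\tfrac{2p-1}{2p}|y_i|^{2p}+\tfrac{1}{2p}|y_j|^{2p}$ globally and symmetrize, which immediately produces a row-by-row lower bound with an $O(1/p)$ defect coming from the \emph{column} sums $\sum_{j\neq i}|b_{ji}|$. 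Your approach is shorter and avoids singling out a dominant coordinate; the paper's approach has the minor advantage of making the positive margin $d_{i_1}^{(0)}$ explicit rather than depending on the uniform $\delta$ extracted from compactness, but both yield the same threshold behaviour in $p$.
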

\begin{proof}[Proof of Lemma \ref{lemma}]
We start with $(i)\Rightarrow(ii)$.
Let $x\in[0,L]$, let $\textcolor{black}{i_{1}}\in[1,n]$, assuming $(i)$ is true for all $y\in\mathbb{R}^{n}\setminus \{0\}$, we take $m\in\mathbb{N}^{*}$, and define $y_{\textcolor{black}{i_{1}}}:=1$, $y_{j}:=-\sgn(b_{\textcolor{black}{i_{1}}j})m/(m+1)$ for $j\neq \textcolor{black}{i_{1}}$.
Then as (\ref{lemma1}) is true \textcolor{black}{there exists $p_{1}\in\mathbb{N}^{*}$ such that
\begin{equation}
\begin{split}
&\sum\limits_{i=1,i\neq i_{1}}^{n} \left(a_{i}(x)y_{i}^{2p} + \sum\limits_{j=1}^{n} b_{ij}(x)y_{i}^{2p-1}y_{j}\right)\\
&+a_{i_{1}}(x)+b_{i_{1}i_{1}}+\sum\limits_{j=1,j\neq i_{1}}^{n} b_{i_{1}j}(x)y_{j}
> 0,\text{ }\forall p>p_{1},\forall x\in[0,L].
\end{split}
\end{equation} 
Note that for any $i\neq i_{1}$, $\lim_{p\rightarrow +\infty}|y_{i}|^{2p}=0$. Thus}, by letting $p\rightarrow + \infty$ one gets
\begin{equation}
a_{\textcolor{black}{i_{1}}}(x)+b_{\textcolor{black}{i_{1}}\textcolor{black}{i_{1}}}(x) \geq \frac{m}{m+1} \sum\limits_{j=1,j\neq \textcolor{black}{i_{1}}}^{n} \lvert b_{\textcolor{black}{i_{1}}j}(x) \rvert,\text{ }\forall x\in[0,L].
\end{equation} 
Hence, as it is true for all $m\in\mathbb{N}^{*}$, letting $m\rightarrow +\infty$
\begin{equation}
a_{\textcolor{black}{i_{1}}}(x)+b_{\textcolor{black}{i_{1}}\textcolor{black}{i_{1}}}(x) \geq \sum\limits_{j=1,j\neq \textcolor{black}{i_{1}}}^{n} \lvert b_{\textcolor{black}{i_{1}}j}(x) \rvert,\text{ }\forall x\in[0,L].
\end{equation} 
\textcolor{black}{This can be done for any $i_{1}\in[1,n]$, which concludes}
% that
$(i)\Rightarrow(ii)$.
\vspace{\baselineskip}\\
Now let us prove that $(iii)\Rightarrow(i)$. First of all observe that we can suppose without loss of generality that $\forall i\in[1,n]$, $b_{ii}:=0$: one just has to redefine $a_{i}:=a_{i}+b_{ii}$. 
Then by (\ref{hypo3}), $a_{i} > \sum\limits_{j=1}^{n} \lvert b_{ij} \rvert$, $\forall i\in[1,n]$, then let us define:
\begin{equation}
d_{i}(x):=a_{i}(x) - \sum\limits_{k=1}^{n} \lvert b_{ik}(x) \rvert,
\label{di}
\end{equation} 
then $d_{i}$ is $C^{0}$ and positive on $[0,L]$. We denote by
\begin{equation}
 d_{i}^{(0)}:=\inf_{[0,L]}(d_{i})=\min_{[0,L]}(d_{i})>0.
\end{equation} 
Now, let $y\in\mathbb{R}^{n}\setminus\{0\}$,
% there exists $l\in[1,n]$ such that $y_{l}\neq0$. Then
we can select $i_{1}$ such that
\begin{equation}
 \lvert y_{i_{1}}\rvert =\max_{i\in[1,n]}(\lvert y_{i} \rvert),
\end{equation} 
thus $y_{i_{1}}\neq 0$ and proving (\ref{lemma1}) is equivalent to proving that there exists $p_{1}\in\mathbb{N}^{*}$ such that for all $p>p_{1}$,
\begin{equation}
\sum\limits_{i=1}^{n} \left(a_{i}(x)\left|\frac{y_{i}}{y_{i_{1}}}\right|^{2p} + \sum\limits_{k=1}^{n}  b_{ik}(x) \left(\frac{y_{i}}{y_{i_{1}}}\right)^{2p-1}\frac{y_{k}}{y_{i_{1}}}\right)> 0,\text{ }\forall x\in[0,L].
\label{tp1}
\end{equation} 
Denoting $z_{i}=y_{i}/y_{i_{1}}$, (\ref{tp1}) becomes
\begin{equation}
I:=\sum\limits_{i=1}^{n} \left(a_{i}z_{i}^{2p} + \sum\limits_{k=1}^{n}  b_{ik} z_{i}^{2p-1}z_{k}\right)>0,\text{ on }[0,L].
\end{equation} 
Using (\ref{di}) we know that
\begin{equation}
I=\sum\limits_{i=1}^{n} d_{i}z_{i}^{2p} + \sum\limits_{k=1}^{n} \lvert b_{ik} \rvert z_{i}^{2p} + \sum\limits_{k=1}^{n}  b_{ik} z_{i}^{2p-1}z_{k}.
\end{equation} 
By definition for $i=i_{1}$, $ \lvert z_{i_{1}}\rvert=1$, and for $i\neq i_{1}$, $\lvert z_{k} \rvert\leq1$, therefore
\begin{equation}
d_{i_{1}}z_{i_{1}}^{2p} + \sum\limits_{k=1}^{n} \lvert b_{i_{1}k} \rvert z_{i_{1}}^{2p} + \sum\limits_{k=1}^{n}  b_{i_{1}k} z_{i_{1}}^{2p-1}z_{k}\geq d_{i_{1}}\geq d_{i_{1}}^{(0)}.
\end{equation} 
Therefore
\begin{equation}
\begin{split}
 I&\geq d_{i_{1}}^{(0)} +\sum\limits_{i=1, i\neq i_{1}}^{n}\left( d_{i}z_{i}^{2p} + \sum\limits_{k=1}^{n} \lvert b_{ik} \rvert \lvert z_{i}\rvert^{2p} - \sum\limits_{k=1}^{n}  \lvert b_{ik}\rvert \lvert z_{i}\rvert^{2p-1}\right),\\
 &= d_{i_{1}}^{(0)} +\sum\limits_{i=1, i\neq i_{1}}^{n} \left(d_{i}z_{i}^{2p} - \sum\limits_{k=1}^{n} \lvert b_{ik} \rvert (1-\lvert z_{i}\rvert) \lvert z_{i}\rvert^{2p-1}\right).
 \end{split}
\end{equation} 
We introduce
\begin{equation}
 g:z\mapsto g(z)= -(1-z) z^{2p-1},
\end{equation} 
We know that g is $C^{1}$ on [0,1] and admits a minimum on $[0,1]$ at $z=1-\frac{1}{2p}$, as one can check that
\begin{equation}
 g'(z)= (2pz-(2p-1)) z^{2p-2}.
\end{equation} 
Therefore
\begin{equation}
 I\geq d_{i_{1}}^{(0)} - \frac{1}{2p} \sum\limits_{i=1,i\neq i_{1}}^{n} \sum\limits_{k=1}^{n} \lvert b_{ik}(x) \rvert,
\end{equation} 
and this is true for all $x\in[0,L]$.
Let us point out that there exists $p_{1}>0$ such that
\begin{equation}
\frac{1}{2p}\sum\limits_{i=1,i\neq i_{1}}^{n} \sum\limits_{k=1}^{n} \lvert b_{ik} \rvert_{0} < d_{i_{1}}^{(0)},\text{ }\forall p>p_{1}.
\end{equation} 
Here $p_{1}$ is a constant and does not depend on $x$.
Hence we can conclude that $I>0,\text{ }\forall p>p_{1},\forall x\in[0,L],\forall y\in\mathbb{R}^n$. Therefore (\ref{lemma1}) holds.
\end{proof}
\vspace{\baselineskip}
Now let us prove Theorem \ref{resultat1}.
\begin{proof}[Proof of Theorem \ref{resultat1}]
Let $T\in\mathbb{R}_{+}^{*}$. Let assume that $A$ and $B$ are of class $C^{2}$, and
% Let
let
 $\mathbf{u}$ be a 
%smooth 
$C^{2}$ solution of system (\ref{system21}),(\ref{nonlocal}) such that $\lvert\mathbf{u}^{0}\rvert_{1}\leq \varepsilon$. Such solution exists for $\varepsilon$ small enough and $\mathbf{u}_{0}\in C^{2}([0,L],\mathbb{R}^{n})$ which verifies the compatiblity conditions (\ref{compat1}) 
%(see Appendix)
(see \cite{Coron1D} in particular Theorem 4.21). We suppose here a $C^{2}$ regularity for technical reason but the final estimate will not depend on the $C^{2}$ norm and will be also true by density for $A$ and $B$ of class $C^{1}$ and for $\mathbf{u}$ a $C^{1}$ solution.
%A checker 1 convergence de la suite u_{n}.
 Recall that $\lambda_{i}$ are the eigenvalues of $A$ as defined in (\ref{defA}). 
% while $E=(m_{ij})_{(i,j)\in[1,n]}$ is a matrix that diagonalizes $A$ and is defined in (\ref{defm}) together with $\eta_{0}$. 
We denote $s_{i}:=\sgn(\lambda_{i}(\mathbf{u},x))$ which only depends on $i$ from the hypothesis of non-vanishing eigenvalues and the continuity of $A$.
%on $\mathcal{B}_{\varepsilon}\times[0,L]$. 
%a checker 1 préciser peut-être l'hypothese dont on parle.
We define:
\begin{equation}
% W_{1,p}=\left(\int_{0}^{1} \sum\limits_{i=1}^{n} f_{i}(x)^{p} \left(\sum\limits_{j=1}^{n} m_{ij}(\mathbf{u},x)u_{j}\right)^{2p} e^{-2p\mu s_{i}} x dx \right)^{1/2p},
  W_{1,p}:=\left(\int_{0}^{L} \sum\limits_{i=1}^{n} f_{i}(x)^{p} u_{i}^{2p} e^{-2p\mu s_{i} x} dx \right)^{1/2p},
 \label{W1}
\end{equation} 
with $p\in\mathbb{N}^{*}$, and $f_{i}>0$ on $[0,L]$ to be determined. Clearly $W_{1,p}>0$ for $\mathbf{u}\neq0$, and $W_{1,p}=0$ when $\mathbf{u}\equiv0$. If we differentiate $W_{1,p}$ with respect to time along the $C^{2}$ trajectories, we have
\begin{equation}
%\begin{split}
%\frac{dW_{1,p}}{dt}=& \frac{W_{1,p}^{1-2p}}{2p}\int_{0}^{1} \sum\limits_{i=1}^{n} f_{i}(x)^{p} \left(\sum\limits_{j=1}^{n} m_{ij}(\mathbf{u},x)u_{j}\right)^{2p-1}\left[\sum\limits_{j=1}^{n} m_{ij}(\mathbf{u},x)\left(-\sum\limits_{k=1}^{n}a_{jk}(\mathbf{u},x)u_{kx}\right.\right.
%\\ & \left.\left.-\sum\limits_{k=1}^{n}M_{jk}(\mathbf{u},x)u_{k}\right)+\sum\limits_{j=1}^{n} \partial_{t} (m_{ij}(\mathbf{u},x))u_{j}\right] e^{-2p\mu s_{i}x} dx,
%\end{split}
\begin{split}
\frac{dW_{1,p}}{dt}=& W_{1,p}^{1-2p}\int_{0}^{L} \sum\limits_{i=1}^{n} f_{i}(x)^{p} u_{i}^{2p-1}\left[-\sum\limits_{k=1}^{n}a_{ik}(\mathbf{u},x)u_{kx}\right.
\\ & \left.-\sum\limits_{k=1}^{n}M_{ik}(\mathbf{u},x)u_{k}\right] e^{-2p\mu s_{i}x} dx,
\end{split}
\end{equation} 
where $(a_{ij})_{(i,j)\in[1,n]^2}=A$ and $M$ is defined in (\ref{M2}). We know that the $a_{ij}$ are $C^{2}$ and from (\ref{M2}) that $a_{ij}(0,\cdot)=\delta_{i,j}\Lambda_{i}(\cdot)$. Here $\delta_{i,j}$ stands for the Kronecker delta. Hence
\begin{equation}
a_{ij}(\mathbf{u},\cdot)=\delta_{i,j}\Lambda_{i}(\cdot)+V_{ij}.\mathbf{u},
\end{equation}
where $V_{ij}$ are $C^{1}$.
Therefore using integration by parts 
%and (\ref{defm})
\begin{equation}
%\begin{split}
% \frac{dW_{1p}}{dt}= & -\frac{W_{1,p}^{1-2p}}{2p}\left[\sum\limits_{i=1}^{n}\lambda_{i} f_{i}(x)^{p} (\sum\limits_{j=1}^{n} m_{ij}u_{j})^{2p} e^{-2p\mu s_{i}x}\right]_{0}^{L} 
% \\  &-W_{1,p}^{1-2p}\int_{0}^{L}\sum\limits_{i=1}^{n}  f_{i}(x)^{p} (\sum\limits_{j=1}^{n} m_{ij}u_{j})^{2p-1} \left[\left(\sum\limits_{j=1}^{n}\sum\limits_{k=1}^{n}m_{ij}M_{jk}u_{k}\right)\right.
% \\  &-\left. \sum\limits_{j=1}^{n} \frac{\partial m_{ij}}{\partial  \mathbf{u}}.\mathbf{u}_{t} u_{j} \right] e^{-2p\mu s_{i}x} dx 
% \\  &+\frac{W_{1,p}^{1-2p}}{2}\int_{0}^{L}\sum\limits_{i=1}^{n} \left(\lambda_{i}(\mathbf{u},x)  f_{i}(x)^{p-1}f'_{i} (\sum\limits_{j=1}^{n} m_{ij}u_{j})^{2p}\right.
% \\  &\left.+\frac{d}{dx}\frac{(\lambda_{i}(\mathbf{u},x))}{p} f_{i}(x)^{p}(\sum\limits_{j=1}^{n} m_{ij}u_{j})^{2p}\right. 
% \\  &\left.+2\lambda_{i}   f_{i}^{p} (\sum\limits_{j=1}^{n} m_{ij}u_{j})^{2p-1}\sum\limits_{j=1}^{n} \left[\frac{\partial m_{ij}}{\partial  \mathbf{u}}.\mathbf{u}_{x}+\partial_{x}m_{ij}(\mathbf{u},x)\right]u_{j}\right)e^{-2p\mu s_{i}x} dx
% \\  &-\mu W_{1,p}^{1-2p} \int_{0}^{L}\sum\limits_{i=1}^{n} \lvert\lambda_{i}\rvert   f_{i}^{p}(\sum\limits_{j=1}^{n} m_{ij}u_{j})^{2p}e^{-2p\mu s_{i}x} dx.
% \end{split}
\begin{split}
 \frac{dW_{1,p}}{dt}= & -\frac{W_{1,p}^{1-2p}}{2p}\left[\sum\limits_{i=1}^{n}\lambda_{i} f_{i}(x)^{p} u_{i}^{2p} e^{-2p\mu s_{i}x}\right]_{0}^{L} 
 \\  &-W_{1,p}^{1-2p}\int_{0}^{L}\sum\limits_{i=1}^{n}  f_{i}(x)^{p} u_{i}^{2p-1} \left[\left(\sum\limits_{k=1}^{n}M_{ik}u_{k}\right)\right.
 +\left.\sum\limits_{k=1}^{n}(V_{ik}(\mathbf{u},x).\mathbf{u})u_{kx} \right] e^{-2p\mu s_{i}x} dx 
 \\  &+\frac{W_{1,p}^{1-2p}}{2}\int_{0}^{L}\sum\limits_{i=1}^{n} \left(\lambda_{i}(\mathbf{u},x)  f_{i}(x)^{p-1}f'_{i} u_{i}^{2p}
+\frac{d}{dx}\frac{(\lambda_{i}(\mathbf{u},x))}{p} f_{i}(x)^{p}u_{i}^{2p}\right) e^{-2p\mu s_{i}x} dx 
 \\  &-\mu W_{1,p}^{1-2p} \int_{0}^{L}\sum\limits_{i=1}^{n} \lvert\lambda_{i}\rvert   f_{i}^{p}u_{i}^{2p}e^{-2p\mu s_{i}x} dx.
 \end{split}
 \label{68}
\end{equation} 
We denote
\begin{equation}
%\mathrm{I}_{2}:=\frac{W_{1,p}^{1-2p}}{2p}\left[\sum\limits_{i=1}^{n}\lambda_{i}   f_{i}(x)^{p} (\sum\limits_{j=1}^{n} m_{ij}u_{j})^{2p} e^{-2p\mu x\sgn(\lambda_{i})}\right]_{0}^{L},
\mathrm{I}_{2}:=\frac{W_{1,p}^{1-2p}}{2p}\left[\sum\limits_{i=1}^{n}\lambda_{i}   f_{i}(x)^{p} u_{i}^{2p} e^{-2p\mu s_{i}  x }\right]_{0}^{L},
\label{I2}
\end{equation} 
and 
\begin{equation}
%\begin{split}
%\mathrm{I}_{3}:=& W_{1,p}^{1-2p}\left(\int_{0}^{L} \sum\limits_{i=1}^{n}   f_{i}(x)^{p} (\sum\limits_{j=1}^{n} m_{ij}u_{j})^{2p-1} (\sum\limits_{j=1}^{n}\sum\limits_{k=1}^{n}m_{ij}M_{jk}u_{k}) e^{-2p\mu s_{i}x} dx \right)
%\\ & -\frac{W_{1,p}^{1-2p}}{2}\int_{0}^{L}\sum\limits_{i=1}^{n} \lambda_{i}(\mathbf{u},x)   f_{i}(x)^{p-1}f'_{i} (\sum\limits_{j=1}^{n} m_{ij}u_{j})^{2p}e^{-2p\mu x\sgn(\lambda_{i})} dx.
%\label{I3}
%\end{split}
\begin{split}
\mathrm{I}_{3}:=& W_{1,p}^{1-2p}\int_{0}^{L} \sum\limits_{i=1}^{n}   f_{i}(x)^{p} u_{i}^{2p-1} \left(\sum\limits_{k=1}^{n}M_{ik}u_{k}\right) e^{-2p\mu s_{i}x} dx
\\ & -\frac{W_{1,p}^{1-2p}}{2}\int_{0}^{L}\sum\limits_{i=1}^{n} \lambda_{i}(\mathbf{u},x)   f_{i}(x)^{p-1}f'_{i} u_{i}^{2p}e^{-2p\mu s_{i} x} dx.
\label{I3}
\end{split}
\end{equation} 
%We suppose now that
We supposed that
 $\lvert\mathbf{u}^{0}\rvert_{1}\leq \varepsilon$, where $\varepsilon>0$ can be chosen arbitrarily small but, of course, independent of $p$. 
From (\ref{Well-posedness}) and denoting $\eta=C_{1}(T)\varepsilon$ we have: $\lvert \mathbf{u}\rvert_{0}\leq\eta$. 
Choosing $\varepsilon$ sufficiently small is thus equivalent to choosing $\eta$ sufficiently small, so we will rather choose $\eta$ in the following and this choice of $\eta$ will always be independent of $p$. 
%This justifies the fact that we chose $\lvert \mathbf{u}\rvert_{0}\leq\eta_{0}$ previously. 
Besides, observe that there exists $\eta_{1}>0$ sufficiently small 
such that for all $\mathbf{u}_{0}\in C^{0}([0,L],\mathbb{R}^{n})$ such that $\lvert \mathbf{u}\rvert_{0}\leq\eta_{1}$
\begin{equation}
\min_{x\in[0,L]}\left(\min_{i\in[1,n]}\left(\lvert \lambda_{i}\left(\mathbf{u},x\right)\rvert\right)\right)\geq \min_{x\in[0,L]} \left(\min_{i\in[1,n]}\left(\frac{\lvert \Lambda_{i}(x)\rvert}{2}\right)\right).
\label{lambda1}
\end{equation} 
Recall that $\Lambda=\lambda(0,\cdot)$ and is defined in (\ref{Lambda}). As $[0,L]$ is a closed segment, and the $\left|\Lambda_{i}\right|$ are strictly positive continuous functions we can define the positive constant $\alpha_{0}:=\min_{x\in[0,L]} \left(\min_{i\in[1,n]}\left(\lvert \Lambda_{i}(x)\rvert/2\right)\right)>0$. We suppose from now on that $\eta<
%\min(\eta_{1},\eta_{0})$
\eta_{1}$. 
Therefore from (\ref{68}), (\ref{I2}), (\ref{I3}) and (\ref{lambda1})
\begin{equation}
\begin{split}
 \frac{dW_{1,p}}{dt}\leq&-I_{2}-\mu \alpha_{0} W_{1,p}-I_{3} 
 \\ & -W_{1,p}^{1-2p}\int_{0}^{L}\sum\limits_{i=1}^{n}  f_{i}^{p} u_{i}^{2p-1}\left(\sum\limits_{j=1}^{n}(V_{ik}(\mathbf{u},x).\mathbf{u} )u_{kx}\right)e^{-2p\mu s_{i} x }\\
 & +\frac{W_{1,p}^{1-2p}}{2p}\int_{0}^{L}\sum\limits_{i=1}^{n} \left(\frac{\partial\lambda_{i}}{\partial  \mathbf{u} }.\mathbf{u}_{x}+\partial_{x}\lambda_{i}\right)   f_{i}(x)^{p}u_{i}^{2p}e^{-2p\mu s_{i} x} dx.
 \label{decomp1}
\end{split}
\end{equation}
We now estimate the two last terms, starting by the last one. The $\lambda_{i}$ are $C^{2}$ and in particular $C^{1}$ in $\mathbf{u}$ therefore
\begin{equation}
%\begin{split}
%\frac{W_{1,p}^{1-2p}}{2p}&\int_{0}^{L}\sum\limits_{i=1}^{n} (\frac{\partial\lambda}{\partial  \mathbf{u} }.\mathbf{u}_{x}+\partial_{x}\lambda_{i})   f_{i}(x)^{p}(\sum\limits_{j=1}^{n} m_{ij}u_{j})^{2p}e^{-2p\mu x\sgn(\lambda_{i})} dx
% \\ &\leq \frac{C_{1}}{2p} W_{1,p}+\frac{C_{2}}{2p}W_{1,p}\lvert \mathbf{u} \rvert_{1},
% \end{split}
\begin{split}
\frac{W_{1,p}^{1-2p}}{2p}&\int_{0}^{L}\sum\limits_{i=1}^{n} (\frac{\partial\lambda}{\partial  \mathbf{u} }.\mathbf{u}_{x}+\partial_{x}\lambda_{i})   f_{i}(x)^{p}u_{i}^{2p}e^{-2p\mu s_{i} x} dx
 \\ &\leq \frac{C_{1}}{2p} W_{1,p}+\frac{C_{2}}{2p}W_{1,p}\lvert \mathbf{u} \rvert_{1},
 \end{split}
\end{equation} 
where $C_{1}$ and $C_{2}$ are constants that depend on $\eta$ and the system but are independent from $p$ and $\mathbf{u}$ provided that $\lvert\mathbf{u}\rvert_{1}<\eta$. Besides we have
\begin{equation}
W_{1,p}^{1-2p}\int_{0}^{L}\sum\limits_{i=1}^{n}  f_{i}^{p} u_{i}^{2p-1}(\sum\limits_{j=1}^{n}(V_{ik}(\mathbf{u},x).\mathbf{u}) u_{kx})e^{-2p\mu x s_{i}} dx\leq C_{3} W_{1,p}\lvert \mathbf{u}\rvert_{1}.
\end{equation}
where $C_{3}$ is a constant that does not depend on on $p$ and $\mathbf{u}$.
%A checker 1 vérifier.
%Besides for all $(i,j)\in[1,n]$, $m_{ij}$ is $C^{2}$ by our hypothesis. As $\eta<\min(\eta_{0},\eta_{1})$, we have: $\partial_{x} m_{ij}(0,x)=0$ on $[0,L]$ from (\ref{condId}).
%Therefore $\partial_{x} m_{ij}(\mathbf{u},x)=\mathbf{u}.V_{ij}(\mathbf{u},x)$ where $V_{ij}$ is $C^{1}$.  As $\frac{\partial m_{ij}}{\partial  \mathbf{u}}$ is $C^{0}$, we have similarly as previously:
%\begin{equation}
%\begin{split}
%W_{1,p}^{1-2p}&\int_{0}^{L}\sum\limits_{i=1}^{n}\lambda_{i}   f_{i}^{p} (\sum\limits_{j=1}^{n} m_{ij}u_{j})^{2p-1}(\sum\limits_{j=1}^{n} \left[\frac{\partial m_{ij}}{\partial  \mathbf{u}}.\mathbf{u}_{x}+\partial_{x}m_{ij}\right]u_{j})e^{-2p\mu x \frac{\lambda_{i}}{\lvert\lambda_{i}\rvert}} 
%\\ &\leq C_{3}W_{1,p}\lvert\mathbf{u}\rvert_{1}+C_{4}W_{1,p}\lvert\mathbf{u}\rvert_{0},
%\end{split}
%\end{equation} 
%where $C_{3}$ and $C_{4}$ are constants that do not depend on $p$ and $\mathbf{u}$. Note that we used here that:
%\begin{equation}
% \sum\limits_{j=1}^{n} \lvert u_{j} \rvert \leq C \sum\limits_{i=1}^{n} \left| \sum\limits_{j=1}^{n} m_{ij} u_{j} \right| ,
%\end{equation} 
%where $C$ is a constant that does not depend on $\mathbf{u}$ (and obsviously not on $p$ either). This comes from the fact that $(m_{ij})_{i\in[1,n]}$ is invertible and its inverse has a finite norm.
Therefore
% as $\lvert\mathbf{u}\rvert_{0}\leq \lvert\mathbf{u}\rvert_{1}$ by definition, 
(\ref{decomp1}) can be written as
\begin{equation}
%\frac{dW_{1,p}}{dt}\leq-I_{2}-I_{3}-(\mu \alpha_{0}- \frac{C_{1}}{2p}) W_{1,p} +(\frac{C_{2}}{2p}+C_{3}+C_{4})W_{1,p}\lvert\mathbf{u}\rvert_{1}.
\frac{dW_{1,p}}{dt}\leq-I_{2}-I_{3}-(\mu \alpha_{0}- \frac{C_{1}}{2p}) W_{1,p} +(\frac{C_{2}}{2p}+C_{3})W_{1,p}\lvert\mathbf{u}\rvert_{1}.
\label{decomp2}
\end{equation} 
As $\alpha_{0}>0$, it is easy to see that there exists $p_{1}\in \mathbb{N}^{*}$ such that $\forall p\geq p_{1}$
\begin{equation}
 \frac{dW_{1,p}}{dt}\leq-I_{2}-I_{3}-\frac{\mu \alpha_{0}}{2} W_{1,p} +C_{4}W_{1,p}\lvert\mathbf{u}\rvert_{1}.
\label{decomp3}
\end{equation} 
Here $p_{1}$ depends only on $\alpha_{0}$ and $\eta$, while $C_{4}$ does not depend on $p$ and $\mathbf{u}$. Before going any further, we see here that if we can manage to prove that $I_{2}>0$ and $I_{3}\text{ }\textcolor{black}{\geq}\text{ }0$ we may be able to conclude to the existence of a Lyapunov function that looks like a $L^{2p}$ norm where $p$ can be as large as we want and therefore we start to see the forecoming basic $C^{1}$ Lyapunov function. We are now left with studying $I_{2}$ and $I_{3}$ which will correspond respectively to the boundary condition and the interior condition we mentioned in Section 2 and in Theorem \ref{resultat1}.
\vspace{\baselineskip}\\
Let us first deal with $I_{3}$:
\begin{equation}
%\begin{split}
%\mathrm{I}_{3}=&W_{1,p}^{1-2p}\int_{0}^{L} \sum\limits_{i=1}^{n} \left( f_{i}^{p}(\sum\limits_{j=1}^{n} m_{ij}u_{j})^{2p-1}(\sum\limits_{j=1}^{n}\sum\limits_{k=1}^{n}m_{ij}M_{jk}u_{k})\right. 
%\\ & \left.-\frac{\lambda_{i} f'_{i}}{2} f_{i}^{p-1} (\sum\limits_{j=1}^{n} m_{ij}u_{j})^{2p}\right)e^{-2p\mu x\sgn(\lambda_{i})} dx.
%\end{split}
\begin{split}
\mathrm{I}_{3}=&W_{1,p}^{1-2p}\int_{0}^{L} \sum\limits_{i=1}^{n} \left( f_{i}^{p}u_{i}^{2p-1}\left(\sum\limits_{k=1}^{n}M_{ik}u_{k}\right)-\frac{\lambda_{i} f'_{i}}{2} f_{i}^{p-1}u_{i}^{2p}\right)e^{-2p\mu s_{i} x} dx.
\end{split}
\end{equation} 
%We know that for $\mathbf{u}\in\mathcal{B}_{\eta_{0}}$ and $x\in [0,L]$, $E(\mathbf{u},x)$ is invertible. Therefore
%\begin{equation}
 %(\sum\limits_{j=1}^{n}\sum\limits_{k=1}^{n}m_{ij}M_{jk}u_{k})_{i\in [1,n]} = EM\mathbf{u}=EME^{-1}(E\mathbf{u}).
%\end{equation} 
%So defining $R=EME^{-1}$, we have:
%\begin{equation}
%\sum\limits_{j=1}^{n}\sum\limits_{k=1}^{n}m_{ij}M_{jk}u_{k}=\sum\limits_{k=1}^{n}R_{ik}\left(\sum\limits_{j=1}^{n}m_{kj}u_{j}\right).
%\end{equation} 
%Therefore, keeping in mind that $\lvert \mathbf{u}\rvert_{0}\leq\eta<\eta_{0}$ so that R is well-defined, $I_{3}$ reduces to
%\begin{equation}
%\begin{split}
%\mathrm{I}_{3}=&W_{1,p}^{1-2p}\int_{0}^{L} \sum\limits_{i=1}^{n} \left( f_{i}^{p}(\sum\limits_{j=1}^{n} m_{ij}u_{j})^{2p-1}\left(\sum\limits_{k=1}^{n}R_{ik}\left(\sum\limits_{j=1}^{n}m_{kj}u_{j}\right)\right)\right.
%\\ & \left.-\frac{\lambda_{i} f'_{i}}{2} f_{i}^{p-1} (\sum\limits_{j=1}^{n} m_{ij}u_{j})^{2p}\right)e^{-2p\mu x\sgn(\lambda_{i})} dx.
%\end{split}
%\end{equation} 
%So setting $y_{i}:=\sqrt{f_{i} }\sum\limits_{j=1}^{n}m_{ij}u_{j}$,
%\begin{equation}
%\mathrm{I}_{3}=W_{1,p}^{1-2p}\int_{0}^{L} \sum\limits_{i=1}^{n} \left(\sum\limits_{k=1}^{n}R_{ik}\frac{\sqrt{f_{i} }}{\sqrt{f_{k}}}y_{k}y_{i}^{2p-1} -\frac{\lambda_{i} f'_{i}}{2f_{i}}y_{i}^{2p}\right)e^{-2p\mu x\sgn(\lambda_{i})} dx.
%\end{equation} 
%
%Now
Let suppose that the system (\ref{cond111}) admits a positive solution $(g_{1},...g_{n})$ on $[0,L]$, which is the interior condition. 
Then we can write this as
\begin{equation}
-\Lambda_{i}g_{i}'= 2\left(\sum\limits_{k=1,k\neq i}^{n}\lvert M_{ik}(0,x)\rvert \frac{g_{i}^{3/2}}{\sqrt{g_{k}}} - M_{ii}(0,x)g_{i} \right)+h_{i},
\label{gi}
\end{equation}
where $h_{i}$ are non-negative functions. By continuity (see for instance \cite{parametre}, in particular Theorem 2.1 in Chapter 5) there exists $\sigma_{1}>0$ such that for all $\sigma\in[0,\sigma_{1}]$ there exists a unique solution to
\begin{equation}
\begin{split}
-\Lambda_{i}f_{i}'= &2\left(\sum\limits_{k=1,k\neq i}^{n}\lvert M_{ik}(0,x)\rvert \frac{f_{i}^{3/2}}{\sqrt{f_{k}}} - M_{ii}(0,x)f_{i} \right)+h_{i}+\sigma,\\
f_{i}(0)=&g_{i}(0).
\end{split}
\end{equation}
We denote $(f_{1,\sigma},...f_{n,\sigma})$ this solution, which is continuous with $\sigma$. Therefore there exists $\sigma_{2}\in(0,\sigma_{1}]$ such that for all $i\in[1,n]$, and all  $\sigma\in(0,\sigma_{2}]$, $f_{i,\sigma}>0$, on $[0,L]$ and
\begin{equation}
\begin{split}
&-\Lambda_{i}f_{i,\sigma}' > 2\left(\sum\limits_{k=1,k\neq i}^{n}\lvert M_{ik}(0,x)\rvert \frac{f_{i,\sigma}^{3/2}}{\sqrt{f_{k,\sigma}}} - M_{ii}(0,x)f_{i,\sigma} \right).
\end{split}
\end{equation}
We choose now $f_{i}:=f_{i,\sigma}$ where $\sigma\in(0,\sigma_{2}]$.
%there exist by continuity $\sigma>0$ and $(f_{i})_{i\in[1,n]}$ positive solutions on $[0,L]$ of the system
%\begin{equation}
%f_{i}'=-\frac{2}{\Lambda_{i}}\left(- M_{ii}(0,x)f_{i} + \sum\limits_{k=1,k\neq i}^{n}\lvert M_{ik}(0,x)\rvert \frac{f_{i}^{3/2}}{\sqrt{f_{k}}} \right)-\frac{1}{\Lambda_{i}}\sigma.
%\label{sigma}
%\end{equation}
%Using that:
%\begin{equation}
%R(0,x)=E(0,x)M(0,x)E^{-1}(0,x)=Id M(0,x) Id=M(0,x),
%\end{equation} 
%we have:
%\begin{equation}
%-\Lambda_{i} \frac{f'_{i}}{f_{i}}>2\sum\limits_{k=1,k\neq i}^{n}\lvert R_{ik}(0,x)\rvert\sqrt{\frac{f_{i}}{f_{k}}} - 2 R_{ii}(0,x).
%\label{cond101}
%\end{equation} 
As 
%$R$
$M$ and $\lambda$ are continuous in $\mathbf{u}$, there exists $\eta_{2}>0$ such that for $\lvert \mathbf{u}\rvert_{0}<\eta_{2}$
\begin{equation}
%-\lambda_{i}(\mathbf{u},x) \frac{f'_{i}}{f_{i}}>2\sum\limits_{k=1,k\neq i}^{n}\lvert R_{ik}(\mathbf{u},x)\rvert\sqrt{\frac{f_{i}}{f_{k}}} - 2 R_{ii}(\mathbf{u},x).
-\lambda_{i}(\mathbf{u},x) \frac{f'_{i}}{f_{i}}>2\sum\limits_{k=1,k\neq i}^{n}\lvert M_{ik}(\mathbf{u},x)\rvert\sqrt{\frac{f_{i}}{f_{k}}} - 2 M_{ii}(\mathbf{u},x).
\label{cond10}
\end{equation} 
Therefore from Lemma \ref{lemma}
\begin{equation}
%\sum\limits_{i=1}^{n}\left( -\frac{\lambda_{i} f'_{i}}{2f_{i}}y_{i}^{2p} + \sum\limits_{j=1}^{n}R_{ik}\frac{\sqrt{f_{i} }}{\sqrt{f_{k}}}y_{k}y_{i}^{2p-1}\right)>0,\text{ }\forall y=(y_{i})_{i\in[1,n]}\in\mathbb{R}^{n}\setminus\{0\},  
\sum\limits_{i=1}^{n}\left( -\frac{\lambda_{i} f'_{i}}{2f_{i}}y_{i}^{2p} + \sum\limits_{j=1}^{n}M_{ik}\frac{\sqrt{f_{i} }}{\sqrt{f_{k}}}y_{k}y_{i}^{2p-1}\right)>0,\text{ }\forall\text{ } \textcolor{black}{\mathbf{y}}=(y_{i})_{i\in[1,n]}\in\mathbb{R}^{n}\setminus\{0\},  
\end{equation} 
\textcolor{black}{applying this for $(y_{i})_{i\in[1,n]}=\textcolor{black}{(\sqrt{f_{i}}u_{i}))_{i\in[1,n]}}$, it} implies that
\begin{equation}
%W_{1,p}^{1-2p}\int_{0}^{L} \sum\limits_{i=1}^{n} \left(\sum\limits_{k=1}^{n}R_{ik}\frac{\sqrt{f_{i} }}{\sqrt{f_{k}}}y_{k}y_{i}^{2p-1} -\frac{\lambda_{i} f'_{i}}{2f_{i}}y_{i}^{2p}\right)dx>0.
W_{1,p}^{1-2p}\int_{0}^{L} \sum\limits_{i=1}^{n} \left(-\textcolor{black}{\frac{\lambda_{i} f'_{i}}{2}f_{i}^{p-1}}u_{i}^{2p}+\sum\limits_{k=1}^{n}M_{ik}u_{k}\textcolor{black}{f_{i}^{p}}u_{i}^{2p-1} \right)dx\text{ }\textcolor{black}{\geq}\text{ }0.
\end{equation}
Therefore by continuity, there exists a $\mu_{1}>0$ such that $\forall\mu\in[0,\mu_{1}]$
\begin{equation}
%\mathrm{I}_{3}=W_{1,p}^{1-2p}\int_{0}^{L} \sum\limits_{i=1}^{n} \left(\sum\limits_{k=1}^{n}R_{ik}\frac{\sqrt{f_{i} }}{\sqrt{f_{k}}}y_{k}y_{i}^{2p-1} -\frac{\lambda_{i} f'_{i}}{2f_{i}}y_{i}^{2p}\right)e^{-2p\mu x\sgn(\lambda_{i})} dx>0.
\mathrm{I}_{3}=W_{1,p}^{1-2p}\int_{0}^{L} \sum\limits_{i=1}^{n} \left(-\textcolor{black}{\frac{\lambda_{i} f'_{i}}{2}f_{i}^{p-1}}u_{i}^{2p}+\textcolor{black}{f_{i}^{p}}u_{i}^{2p-1}\left(\sum\limits_{k=1}^{n}M_{ik}u_{k}\right) \right)e^{-2p\mu  s_{i} x} dx>0.
\end{equation} 

Now let us deal with $I_{2}$, which will lead to the boundary condition. 
Recall that
\begin{equation}
%\begin{split}
%\mathrm{I}_{2}=&\frac{W_{1,p}^{1-2p}}{2p}\left[\sum\limits_{i=1}^{n}\lambda_{i}(\mathbf{u}(t,L),L)   f_{i}(L)^{p} (\sum\limits_{j=1}^{n} m_{ij}(\mathbf{u}(t,L),L)u_{j}(t,L))^{2p} e^{-2p\mu L\sgn(\lambda_{i})}\right.
%\\ & \left.-\sum\limits_{i=1}^{n}\lambda_{i}(\mathbf{u}(t,0),0)   f_{i}(0)^{p} (\sum\limits_{j=1}^{n} m_{ij}(\mathbf{u}(t,0),0)u_{j}(t,0))^{2p}\right].
%\end{split}
\begin{split}
\mathrm{I}_{2}=&\frac{W_{1,p}^{1-2p}}{2p}\left[\sum\limits_{i=1}^{n}\lambda_{i}(\mathbf{u}(t,L),L)   f_{i}(L)^{p} u_{i}^{2p}(t,L) e^{-2p\mu s_{i} L}\right.
\\ & \left.-\sum\limits_{i=1}^{n}\lambda_{i}(\mathbf{u}(t,0),0)   f_{i}(0)^{p} u_{i}^{2p}(t,0)\right].
\end{split}
\end{equation} 
%Let us denote by $m\in[1,n]$ the integer such that for all $i\in[1,m]$ (when $m\neq0$), $\Lambda_{i}>0$ and for all $i\in[m+1,n]$ (when $m\neq n$), $\Lambda_{i}<0$. 
%We have:
%\begin{equation}
%%\begin{split}
%%\mathrm{I}_{2}=&\frac{W_{1,p}^{1-2p}}{2p}\left(\left[\sum\limits_{i=1}^{m}\lvert\lambda_{i}(\mathbf{u}(t,x),x)\rvert   f_{i}(x)^{p} (\sum\limits_{j=1}^{n} m_{ij}(\mathbf{u}(t,x),x)u_{j}(t,x))^{2p} e^{-2p\mu x}\right]_{0}^{L}\right.
%%\\ &\left. -\left[\sum\limits_{i=m+1}^{n}\lvert\lambda_{i}(\mathbf{u}(t,x),x)\rvert   f_{i}(x)^{p} (\sum\limits_{j=1}^{n} m_{ij}(\mathbf{u}(t,x),x)u_{j}(t,x))^{2p} e^{2p\mu x}\right]_{0}^{L}\right).
%%\end{split} 
%\begin{split}
%\mathrm{I}_{2}=&\frac{W_{1,p}^{1-2p}}{2p}\left(\left[\sum\limits_{i=1}^{m}\lvert\lambda_{i}(\mathbf{u}(t,x),x)\rvert   f_{i}(x)^{p} u_{j}^{2p}(t,x) e^{-2p\mu x}\right]_{0}^{L}\right.
%\\ &\left. -\left[\sum\limits_{i=m+1}^{n}\lvert\lambda_{i}(\mathbf{u}(t,x),x)\rvert   f_{i}(x)^{p} u_{i}^{2p}(t,x) e^{2p\mu x}\right]_{0}^{L}\right).
%\end{split} 
%\end{equation} 
%As the $m_{ij}$ are $C^1$, we have from (\ref{condId})
%\begin{eqnarray}
% m_{ij}(\mathbf{u}(t,L),L)=\delta_{i,j}+O(\lvert \mathbf{u}(t,L)\rvert),\\
% m_{ij}(\mathbf{u}(t,0),0)=\delta_{i,j}+O(\lvert \mathbf{u}(t,0)\rvert).
%\end{eqnarray} 
%Here $\delta_{i,j}$ stands for the Kronecker delta and the estimate in $O$ is taken close to $0$. Hence:
%\begin{equation}
%(\sum\limits_{j=1}^{n} m_{ij}(\mathbf{u}(t,0),0)u_{j}(t,0))^{2p}=(u_{i}(t,0))^{2p}+O(\lvert \mathbf{u}(t,0) \rvert^{2p+1}),\text{ }\forall i\in[1,n].
%\end{equation}
Recall that $m$ is the integer such that $\Lambda_{i}>0,$ for all $i\leq m $ and $\Lambda_{i}<0,$ for all $ i>m$, we have
\begin{equation}
%\begin{split}
%\mathrm{I}_{2}=&\frac{W_{1,p}^{1-2p}}{2p}\left(\sum\limits_{i=1}^{m}\lvert\lambda_{i}(\mathbf{u}(t,L),L)\rvert   f_{i}(L)^{p} (u_{i}(t,L))^{2p} e^{-2p\mu L}\right.
%\\&\left.-\sum\limits_{i=1}^{m}\lvert\lambda_{i}(\mathbf{u}(t,0),0)\rvert   f_{i}(0)^{p} (u_{i}(t,0))^{2p} \right.\\
%&\left. -\sum\limits_{i=m+1}^{n}\lvert\lambda_{i}(\mathbf{u}(t,L),L)\rvert   f_{i}(L)^{p} (u_{i}(t,L))^{2p} e^{2p\mu L}\right.\\
%&\left.+\sum\limits_{i=m+1}^{n}\lvert\lambda_{i}(\mathbf{u}(t,0),0)\rvert   f_{i}(0)^{p} (u_{i}(t,0))^{2p}\right) +O(\lvert \mathbf{u}(t,L) \rvert^{2p+1})+O(\lvert \mathbf{u}(t,0) \rvert^{2p+1}),
%\end{split}
\begin{split}
\mathrm{I}_{2}=&\frac{W_{1,p}^{1-2p}}{2p}\left(\sum\limits_{i=1}^{m}\lvert\lambda_{i}(\mathbf{u}(t,L),L)\rvert   f_{i}(L)^{p} u_{i}^{2p}(t,L) e^{-2p\mu L}\right.
\\&\left.-\sum\limits_{i=1}^{m}\lvert\lambda_{i}(\mathbf{u}(t,0),0)\rvert   f_{i}(0)^{p} u_{i}^{2p}(t,0) \right.\\
&\left. -\sum\limits_{i=m+1}^{n}\lvert\lambda_{i}(\mathbf{u}(t,L),L)\rvert   f_{i}(L)^{p} u_{i}^{2p}(t,L) e^{2p\mu L}\right.\\
&\left.+\sum\limits_{i=m+1}^{n}\lvert\lambda_{i}(\mathbf{u}(t,0),0)\rvert   f_{i}(0)^{p} u_{i}^{2p}(t,0)\right),
\end{split}
\end{equation} 
We denote $K:=G'(0)$ and we know that under assumption (\ref{condauxbords}) there exists $\Delta=\left(\Delta_{1},...,\Delta_{n}\right)^{T}\in(\mathbb{R}_{+}^{*})^{n}$ such that 
\begin{equation}
\theta:=\sup_{\lVert \xi \rVert_{\infty}\leq1}(\sup_{i}(\lvert \sum\limits_{j=1}^{n}(\Delta_{i} K_{ij}\Delta_{j}^{-1})\xi_{j} \rvert))<\frac{\inf_{i}\left(\frac{g_{i}(d_{i})}{\Delta_{i}^{2}}\right)}{\sup_{i}\left(\frac{g_{i}(L-d_{i})}{\Delta_{i}^{2}}\right)}.
\end{equation} 
where $(g_{i})_{i\in[1,n]}$ denote the positive solution of (\ref{cond111}) introduced previously in (\ref{gi}). Note that we have in fact $\theta=\sup_{i}(\sum\limits_{i=0}^{n}\lvert K_{ij}\rvert \frac{\Delta_{i}}{\Delta_{j}})$.
Let:
\begin{gather}
 \xi_{i}=\Delta_{i}u_{i}(t,L)\text{ for }i\in[1,m],\\
 \xi_{i}=\Delta_{i}u_{i}(t,0)\text{ for }i\in[m+1,n].
\end{gather} 
From (\ref{nonlocal}) and using the fact that $G$ is $C^1$, we have
\begin{equation}
\begin{pmatrix}
u_{+}(t,0)\\
u_{-}(t,L)
\end{pmatrix}
= K\begin{pmatrix}
u_{+}(t,L)\\
u_{-}(t,0)
\end{pmatrix}
%+O\left(\left| \begin{pmatrix}
%u_{+}(t,L)\\
%u_{-}(t,0)
%+o\left(\lvert u_{+}(t,L)\rvert+\lvert u_{-}(t,0)\rvert\right),
+o\left(\left| \begin{pmatrix}
u_{+}(t,L)\\
u_{-}(t,0)
\end{pmatrix}
\right|\right),
%\end{pmatrix}
%\right|^{2}\right),
\end{equation} 
where $o(x)$ refers to a function such that $o(x)/\lvert x \rvert$ tends to $0$ when $\lvert\mathbf{u}\rvert_{0}$ tends to $0$.
Thus we get
\begin{equation}
\begin{split}
\mathrm{I}_{2}=&\frac{W_{1,p}^{1-2p}}{2p}\left(\sum\limits_{i=1}^{m} \lambda_{i}(\mathbf{u}(t,L),L) \frac{f_{i}(L)^{p}}{\Delta_{i}^{2p}} (u_{i}(t,L)\Delta_{i})^{2p} e^{-2p\mu L}\right.
\\ &\left. +\sum\limits_{i=m+1}^{n} \lvert \lambda_{i}(\mathbf{u}(t,0),0)\rvert \frac{f_{i}(0)^{p}}{\Delta_{i}^{2p}}(u_{i}(t,0)\Delta_{i})^{2p}\right.
\\ &\left.-\sum\limits_{i=1}^{m}\lambda_{i}(\mathbf{u}(t,0),0) \frac{f_{i}(0)^{p}}{\Delta_{i}^{2p}} (\sum\limits_{k=1}^{n}K_{ik}\xi_{k}(t)\frac{\Delta_{i}}{\Delta_{k}}\textcolor{black}{+o(\mathbf{\xi})})^{2p}\right.
\\ &\left.-\sum\limits_{i=m+1}^{n}\left|\lambda_{i}(\mathbf{u}(t,L),L)\right| \frac{f_{i}(L)^{p}}{\Delta_{i}^{2p}} (\sum\limits_{k=1}^{n}K_{ik}\xi_{k}(t)\frac{\Delta_{i}}{\Delta_{k}}\textcolor{black}{+o(\mathbf{\xi})})^{2p}e^{2p\mu L}\right)
%\\ &+O((\lvert \mathbf{u}(t,0)\rvert+\lvert\mathbf{u}(t,L)\rvert)^{2}(\lvert \mathbf{u}(t,0)\rvert^{2p-1}+\lvert\mathbf{u}(t,L)\rvert^{2p-1})).
%\\ &\left.+o\left((\lvert \mathbf{u}(t,0)\rvert+\lvert\mathbf{u}(t,L)\rvert)(\lvert \mathbf{u}(t,0)\rvert^{2p-1}+\lvert\mathbf{u}(t,L)\rvert^{2p-1})\right)\right).
\end{split}
\end{equation}
As the $\lambda_{i}$ are $C^{1}$ in $\mathbf{u}$ we have
\begin{equation}
\begin{split}
\mathrm{I}_{2}=&\frac{W_{1,p}^{1-2p}}{2p}\left(\sum\limits_{i=1}^{m} (\Lambda_{i}(L)\textcolor{black}{+O(\mathbf{\xi})}) \frac{f_{i}(L)^{p}}{\Delta_{i}^{2p}} (u_{i}(t,L)\Delta_{i})^{2p} e^{-2p\mu L}\right.
\\ &\left. +\sum\limits_{i=m+1}^{n} \lvert (\Lambda_{i}(0)\textcolor{black}{+O(\mathbf{\xi})})\rvert \frac{f_{i}(0)^{p}}{\Delta_{i}^{2p}}(u_{i}(t,0)\Delta_{i})^{2p}\right.
\\ &\left.-\sum\limits_{i=1}^{m}(\Lambda_{i}(0)\textcolor{black}{+O(\mathbf{\xi})})\frac{f_{i}(0)^{p}}{\Delta_{i}^{2p}} (\sum\limits_{k=1}^{n}K_{ik}\xi_{k}(t)\frac{\Delta_{i}}{\Delta_{k}}\textcolor{black}{+o(\mathbf{\xi})})^{2p}\right.
\\ &\left.-\sum\limits_{i=m+1}^{n}\lvert (\Lambda_{i}(L)\textcolor{black}{+O(\mathbf{\xi})})\rvert \frac{f_{i}(L)^{p}}{\Delta_{i}^{2p}} (\sum\limits_{k=1}^{n}K_{ik}\xi_{k}(t)\frac{\Delta_{i}}{\Delta_{k}}\textcolor{black}{+o(\mathbf{\xi})})^{2p}e^{2p\mu L}\right)
%\\ &+O((\lvert \mathbf{u}(t,0)\rvert+\lvert\mathbf{u}(t,L)\rvert)^{2}(\lvert \mathbf{u}(t,0)\rvert^{2p-1}+\lvert\mathbf{u}(t,L)\rvert^{2p-1})).
%\\ &\left.+o\left((\lvert \mathbf{u}(t,0)\rvert+\lvert\mathbf{u}(t,L)\rvert)(\lvert \mathbf{u}(t,0)\rvert^{2p-1}+\lvert\mathbf{u}(t,L)\rvert^{2p-1})\right)\right).
\end{split}
\end{equation}
\textcolor{black}{where $O(x)$ refers to a function such that $O(x)/|x|$ is bounded when $\lvert\mathbf{u}\rvert_{0}$ tends to $0$.} Now let $t\in[0,T]$, there exists $i_{0}$ such that $\max_{i}(\xi_{i}^{2}(t))=\xi_{i_{0}}^{2}$, to simplify the notations we introduce $d_{i}$ such that $d_{i}=L$ for $i\leq m$ and $d_{i}=0$ for $i\geq m+1$.
Then \textcolor{black}{there exists a constant $C>0$ independant of $\mathbf{u}$ and $p$ such that}
\begin{equation}
\begin{split}
 \mathrm{I}_{2}\geq& \frac{W_{1,p}^{1-2p}}{2p}((\lvert\Lambda_{i_{0}}(d_{i_{0}})\rvert\textcolor{black}{-C|\xi_{i_{0}}|}) \frac{f_{i_{0}}^{p}(d_{i_{0}})}{\Delta_{i_{0}}^{2p}} \xi_{i_{0}}^{2p}(t) e^{-2p\mu d_{i_{0}}}
 \\ & -\sum\limits_{i=1}^{n} (\lvert\Lambda_{i}(L-d_{i})\rvert\textcolor{black}{+C|\xi_{i_{0}}|})\frac{f_{i}^{p}(L-d_{i})}{\Delta_{i}^{2p}} (\theta \textcolor{black}{+l(\xi_{i_{0}})})^{2p}\xi_{i_{0}}^{2p}e^{2p\mu (L-d_{i})})
 %\\ &+O((\lvert \mathbf{u}(t,0)\rvert+\lvert\mathbf{u}(t,L)\rvert)^{2}(\lvert \mathbf{u}(t,0)\rvert^{2p-1}+\lvert\mathbf{u}(t,L)\rvert^{2p-1})),
% \\ &+o\left((\lvert \mathbf{u}(t,0)\rvert+\lvert\mathbf{u}(t,L)\rvert)(\lvert \mathbf{u}(t,0)\rvert^{2p-1}+\lvert\mathbf{u}(t,L)\rvert^{2p-1})\right)\textcolor{black}{)},
 \end{split}
\end{equation}  
\textcolor{black}{where $l$ is a continuous and positive function which satisfies $l(0)=0$.}
thus
\begin{equation}
\begin{split}
 \mathrm{I}_{2}\geq& \frac{W_{1,p}^{1-2p}}{2p}((\lvert\Lambda_{i_{0}}(d_{i_{0}})\rvert\textcolor{black}{-C|\xi_{i_{0}}|}) \frac{f_{i_{0}}^{p}(d_{i_{0}})}{\Delta_{i_{0}}^{2p}} \xi_{i_{0}}^{2p}(t) e^{-2p\mu d_{i_{0}}}
 \\ &  -n \sup_{i\in[1,n]}\left((\lvert\Lambda_{i}(L-d_{i})\rvert\textcolor{black}{+C|\xi_{i_{0}}|})\frac{f_{i}^{p}(L-d_{i})}{\Delta_{i}^{2p}}e^{2p\mu (L- d_{i})}\right)(\theta\textcolor{black}{+l(\xi_{i_{0}})})^{2p}\xi_{i_{0}}^{2p})
% & +O((\lvert \mathbf{u}(t,0)\rvert+\lvert\mathbf{u}(t,L)\rvert)^{2}(\lvert \mathbf{u}(t,0)\rvert^{2p-1}+\lvert\mathbf{u}(t,L)\rvert^{2p-1})).
%  &+o\left((\lvert \mathbf{u}(t,0)\rvert+\lvert\mathbf{u}(t,L)\rvert)(\lvert \mathbf{u}(t,0)\rvert^{2p-1}+\lvert\mathbf{u}(t,L)\rvert^{2p-1})\right)\textcolor{black}{)}.
 \end{split}
 \label{I211}
\end{equation}  
%Let now suppose that the hypothesis
Now, from (\ref{condauxbords})
% is verified, i.e. that
 we have
\begin{equation}
\theta^{2}<\frac{\inf_{i}\left(\frac{g_{i}(d_{i})}{\Delta_{i}^{2}}\right)}{\sup_{i}\left(\frac{g_{i}(L-d_{i})}{\Delta_{i}^{2}}\right)},
\end{equation}
where $(g_{i})_{i\in[1,n]}$ still denote the positive solution of (\ref{cond111}). Remark that we set earlier $f_{i}:=f_{i,\sigma}$ where $\sigma\in(0,\sigma_{2}]$ and can be chosen arbitrary small, and recall that the functions $f_{i,\sigma}$ are continuous in $\sigma$ on this neighbourhood of $0$. Therefore there exists $\sigma\in(0,\sigma_{2}]$ such that
\begin{equation}
\theta^{2}<\frac{\inf_{i}\left(\frac{f_{i}(d_{i})}{\Delta_{i}^{2}}\right)}{\sup_{i}\left(\frac{f_{i}(L-d_{i})}{\Delta_{i}^{2}}\right)}.
\end{equation}
But as the inequality is strict, there exist by continuity \textcolor{black}{$\eta_{3}\in(0,\eta_{2})$}, $p_{3}>0$ and $\mu_{3}$ such that \textcolor{black}{for all $\lvert \mathbf{u}\rvert_{0}<\eta_{3}$ and $p>p_{3}$}
\begin{equation}
\left(\theta\textcolor{black}{+l(\xi_{i0})}\right)^{2}<\left(\frac{\inf_{i}\lvert\Lambda_{i}(d_{i})\rvert\textcolor{black}{-C|\xi_{i_{0}}|}}{n \left(\sup_{i}\lvert\Lambda_{i}(L-d_{i})\rvert\textcolor{black}{+C|\xi_{i_{0}}|}\right)}\right)^{1/p}\frac{\inf_{i}\left(\frac{f_{i}(d_{i})}{\Delta_{i}^{2}}\right)}{\sup_{i}\left(\frac{f_{i}(L-d_{i})}{\Delta_{i}^{2}}\right)}e^{-4\mu L},\text{ }\forall \mu\in[0,\mu_{3}], \forall p\geq p_{3}.
 \label{I201}
\end{equation} 
Therefore from (\ref{I201}) and (\ref{I211}) 
%there exists $\eta_{3}>0$ such that 
 $\mathrm{I}_{2}>0$.
%We choose in addition $\eta_{3}$ smaller that $\eta_{2}$. 
We can conclude that there exist $p_{4}$ and $\mu>0$ 
\begin{equation}
\frac{dW_{1,p}}{dt}\leq-\frac{\mu \alpha_{0}}{2} W_{1,p} +C_{6}W_{1,p}\lvert\mathbf{u}\rvert_{1},\text{ }\forall p\geq p_{4}.
\label{IW1}
\end{equation} 
\vspace{\baselineskip}\\
We now have our first estimate and we have seen appear both an interior condition and a boundary condition that explains the conditions that appear in Theorem \ref{resultat1}. Yet there remains a potentially non-negative term in $\lvert\mathbf{u}\rvert_{1}$ and the function we considered in (\ref{W1}) does not have the form of a basic $C^{1}$ Lyapunov function.
The last step is now to convert $W_{1,p}$ in a basic $C^{1}$ Lyapunov function.
Defining
\begin{equation}
% W_{2,p}=\left(\int_{0}^{1} \sum\limits_{i=1}^{n}   f_{i}(x)^{p} (\sum\limits_{j=1}^{n} m_{ij}(\mathbf{u_{t}})_{j})^{2p} e^{-2p\mu x\sgn(\lambda_{i})} dx \right)^{1/2p},
 W_{2,p}=\left(\int_{0}^{L} \sum\limits_{i=1}^{n}   f_{i}(x)^{p}\textcolor{black}{(E\mathbf{u_{t}})_{i}^{2p}} e^{-2p\mu s_{i}x} dx \right)^{1/2p},
 \label{W2}
\end{equation} 
where \textcolor{black}{$E=E(\mathbf{u}(t,x),x)$ is given by 
%\eqref{defE}
\eqref{defm},} and proceeding the same way and observing that, for $C^{2}$ solutions,
\begin{equation}
 \mathbf{u}_{tt}+A(\mathbf{u},x)\mathbf{u}_{tx}+\left[\frac{\partial A}{\partial  \mathbf{u}}(\mathbf{u},x).\mathbf{u}_{t}\right]\mathbf{u}_{x}+ \frac{\partial B}{\partial  \mathbf{u}}(\mathbf{u},x)\mathbf{u}_{t}=0,
\end{equation} 
where $\partial A/\partial \mathbf{u}.\mathbf{u}_{t}$ refers to the matrix with coefficients $\sum\limits_{k=1}^{n}\partial A_{ij}/\partial \mathbf{u}_{k}(\mathbf{u},x).\partial_{t}\mathbf{u}_{k}(t,x)$, 
%and where $diag((y_{i})_{i\in[1,n]})$ denote the diagonal matrix with diagonal coefficients $(y_{i})_{i\in[1,n]}$,
we can obtain similarly
\begin{equation}
\frac{dW_{2,p}}{dt}\leq-\frac{\mu \alpha_{0}}{2} W_{2,p} +C_{7}W_{2,p}\lvert\mathbf{u}\rvert_{1}.
\end{equation} 
In order to avoid overloading this article, the proof -which is very similar to the proof of (\ref{IW1})-  is given in the Appendix (see \ref{W2derivative}).

Now let us define $W_{p}:=W_{1,p}+W_{2,p}$, there exists $\eta_{4}>0$ (independent of $p$), $\mu>0$, $C$ (independent of $p$ and $\mathbf{u}$), and $p_{5}$ such that, with $\lvert \mathbf{u} \rvert_{1}<\eta_{4}$,
\begin{equation}
\frac{dW_{p}}{dt}\leq-\frac{\mu \alpha_{0}}{2} W_{p} +C W_{p}\lvert \mathbf{u} \rvert_{1},\text{ }\forall p\geq p_{5}.
\label{IWp}
\end{equation}
Here we see that this estimate does not depend on the $C^{2}$ norm of the solution $\mathbf{u}$ 
and of the $C^{2}$ norms of $A$ and $B$
and is therefore also true by density for solutions that are only of class $C^{1}$ and for $A$ and $B$ also only $C^{1}$. To be fully rigourous, this statement assumes the well-posedness of the system (\ref{system21}), (\ref{nonlocal}), ($\mathbf{u}=\mathbf{u}^{0}$) in $W^{1,\infty}$ when $u_{0}\in W^{1,\infty}([0,L])$, but such well posedness is true (see \cite{LiYu}). 
% $\mathbf{u}\in C^{1}([0,T]\times[0,L])$.
%A checker 1
We choose such $\eta,\mu,p_{5}$, and we define our basic $C^{1}$ Lyapunov function candidate
\begin{equation}
\begin{split}
 V&:=\lvert \textcolor{black}{\sqrt{f_{1}}}\textcolor{black}{u_{1}} e^{-\mu x\frac{\lambda_{1}}{\lvert\lambda_{1}\rvert}},...,  \textcolor{black}{\sqrt{f_{n}}} \textcolor{black}{u_{n}} e^{-\mu x\frac{\lambda_{n}}{\lvert\lambda_{n}\rvert}}\rvert_{0}\\
 &+\lvert \sqrt{f_{1}} \textcolor{black}{(E\mathbf{u_{t}})_{1}} e^{-\mu x\frac{\lambda_{1}}{\lvert\lambda_{1}\rvert}},..., \sqrt{f_{n}} \textcolor{black}{(E\mathbf{u_{t}})_{n}} e^{-\mu x\frac{\lambda_{n}}{\lvert\lambda_{n}\rvert}}\rvert_{0}.
 \end{split}
\end{equation} 
Similarly to the method used in \cite{CoronC1} we can first choose $\eta_{5}<\min(\eta_{1},\eta_{2},\eta_{3},\eta_{4})$ such that for all $\eta<\eta_{5}$ 
\begin{equation}
 \lvert \mathbf{u} \rvert_{1}<\frac{\mu\alpha_{0}}{4 C}.
 \label{eta1}
\end{equation} 
\begin{rmk}
Recall that $\lvert \mathbf{u}\rvert_{1}\leq \eta$ and that for convenience we are choosing $\eta$ the bound on $\lvert \mathbf{u} \rvert_{1}$ instead of choosing 
$\varepsilon$, the bound on $\lvert \mathbf{u}^{0} \rvert_{1}$, but from (\ref{Well-posedness}) it is equivalent. Hence the previous only means choosing $\varepsilon_{2}>0$ small enough, and such that for all $\varepsilon<\varepsilon_{2}$
\begin{equation}
\lvert \mathbf{u}(0,\cdot) \rvert_{1}<\frac{\mu\alpha_{0}}{4 C_{1}(T) C},
\end{equation} 
where $C_{1}(T)$ is the constant defined in (\ref{Well-posedness}).
\end{rmk}
\vspace{\baselineskip}
Therefore from (\ref{IWp}) and (\ref{eta1})
\begin{equation}
\frac{dW_{p}}{dt}\leq-\frac{\mu \alpha_{0}}{4} W_{p}(t),\text{ }\forall p\geq p_{5}.
\end{equation} 
\textcolor{black}{
Thus, using Gronwall Lemma, one has, for any $p\geq p_{5}$ and any $0\leq t'\leq t\leq T$,
\begin{equation}
W_{p}(t)\leq W_{p}(t')e^{-\frac{\mu \alpha_{0}}{4}(t-t')}.
\end{equation} 
Then, by definitions of $W_{p}$ and $V$
\begin{gather}
\lim_{p\to +\infty} W_{p}(t)=V^{\textcolor{black}{2}}(t),\text{ }\forall t\in[0,T],
\label{convergence}
\end{gather}
%and $W_{p}$ converges weakly to $V$ in the weak$^{*}$ topology of $\sigma(L^{\infty}(0,T),L^{1}(0,T))$.
%%and as $W_{p}\rightharpoonup V$ 
%%, therefore:
%%$Achecker$
Therefore
\begin{equation}
V(t)\leq V(t')e^{-\frac{\mu \alpha_{0}}{\textcolor{black}{8}}(t-t')},\text{  }\forall\text{  }0\leq t'\leq t\leq T.
\end{equation} 
%\begin{equation}
%\frac{dV}{dt}\leq -\frac{\mu \alpha_{0}}{4} V,
%\end{equation} 
%in a distributional sense on $(0,T)$.
}
Therefore $V$ is a basic $C^{1}$ Lyapunov function with the associated constants $\gamma=\frac{\mu \alpha_{0}}{\textcolor{black}{8}}$ and $\eta=\eta_{5}$.
\end{proof}

\paragraph{Proof of Theorem \ref{resultat2}}
\begin{proof}[Proof]
The sufficient way is simply proven by 
using
%looking at the proof of
 Theorem \ref{resultat1} with $G\equiv0$ for instance. 
We are left with proving the necessary way. Let us suppose that there exists a basic $C^{1}$ Lyapunov function $V$ induced by coefficients $(f_{1},...f_{n})$ and $\gamma$ and $\eta_{1}$
the constants associated such that $V$ is a Lyapunov function for all $\mathbf{u}$ smooth solution that satisfies the compatibility conditions and such that $\lvert \mathbf{u}\rvert_{0}<\eta_{1}$.
Suppose now by contradiction that the system (\ref{cond122}) does not admit a solution $(g_{1},...,g_{n})$ on $[0,L]$ such that for all $i\in[1,n]$, $g_{i}>0$. 
%Then if for all $i\in[1,n]$ and all $x\in[0,L]$ we had:
%\begin{equation}
%-\Lambda_{i}(x)f'_{i}(x)\geq 2\sum\limits_{k=1,k\neq i}^{n}\lvert M_{ik}(0,x)\rvert\frac{f_{i}^{3/2}(x)}{\sqrt{f_{k}(x)}} - 2 M_{ii}(0,x)f_{i}(x),
%\label{lemma1cont00}
%\end{equation} 
%then by comparison there would exist a solution $(f_{1},...,f_{n})$ on $[0,L]$ to (\ref{cond111}) such that for all $i\in[0,L]$, $f_{i}>0$ and we would have contradiction.
%Therefore 
Then there exist $x_{0}\in[0,L]$ and $i_{0}\in[1,n]$ such that
\begin{equation}
-\Lambda_{i_{0}}(x_{0})f'_{i_{0}}(x_{0})<2\sum\limits_{k=1,k\neq i_{0}}^{n}\lvert M_{i_{0}k}(0,x_{0})\rvert\frac{f_{i_{0}}^{3/2}(x_{0})}{\sqrt{f_{k}(x_{0})}} - 2 M_{i_{0}i_{0}}(0,x_{0})f_{i_{0}}(x_{0}),
\label{lemma1cont0}
\end{equation} 
as, if not, $(f_{1},...f_{n})$ would be a solution on $[0,L]$ to (\ref{cond122}) with $f_{i}>0$, for all $i\in[1,n]$. We can rewrite (\ref{lemma1cont0}) simply as
\begin{equation}
-\sum\limits_{k=1,k\neq i_{0}}^{n}\left|M_{i_{0}k}(0,x_{0})\right|\frac{\sqrt{f_{i_{0}}(x_{0})}}{\sqrt{f_{k}(x_{0})}} -\frac{\Lambda_{i_{0}}(x_{0}) f'_{i_{0}}(x_{0})}{2f_{i_{0}}(x_{0})}+M_{i_{0}i_{0}}(0,x_{0})<0.
\label{lemma1cont}
\end{equation} 

For simplicity we can assume without losing any generality that $i_{0}=1$. By continuity there exists $\varepsilon>0$ such that (\ref{lemma1cont}) is true on $[x_{0}-\varepsilon,x_{0}+\varepsilon]\cap[0,L]$. 
%The intersection here is only needed to deal with the case $x_{0}=0$ or $L$ but we can actually 
We actually can suppose without loss of generality that $x_{0}\in(0,L)$ and that $[x_{0}-\varepsilon,x_{0}+\varepsilon]\subset(0,L)$. 
%Let assume first that $A$ is $C^{3}$ and $B$ is $C^{2}$. This is again needed for technical reason but the result will be also true for $A$ and $B$ only of class $C^{1}$ by density.
% as if $x_{0}=0$ or $L$ then by continuity the previous is also true for an $x_{0}\in(0,L)$ close to the boundary. So we will assume that $x_{0}\in(0,L)$ and $[x_{0}-\varepsilon,x_{0}+\varepsilon]\subset[0,L]$ in the following. 
\vspace{\baselineskip}\\
Then we take $u_{1}^{0}\in(-\eta_{2},\eta_{2})$ positive, where $\eta_{2}$ is a positive constant arbitrary so far,
and define the vector $\mathbf{u}^{0}$ by
\begin{gather}
u_{i}^{0}:=-u_{1}^{0}\left(1-\frac{1}{k}\right)\sgn(M_{1i}(0,x_{0})), \text{ }\forall i\neq1,
\label{y01}
\end{gather}
where 
%$x_{s}$ will be defined later on, 
$k\in\mathbb{N}^{*}$ is arbitrary and $\sgn(0)=0$.
As the system is strictly hyperbolic, $\min(\lvert\lambda_{i}(x_{0})\rvert)$ is achieved at most for two $i\in[1,n]$. 
If so, we denote $i_{0}$ and $i_{1}$ the corresponding index, and if $i_{0}\neq 1$ and $i_{1}\neq 1$ we can redefine $u_{i_{1}}^{0}$ by
\begin{equation}
u_{i_{0}}^{0}:=-u_{1}^{0}\left(1-\frac{1}{k_{2}}\right)\sgn(M_{1i_{0}}(0,x_{0})),
\label{redefine}
\end{equation}
where $k_{2}\in \mathbb{N}^{*}$ with $k_{2}>k$. The goal of this redefinition is that in both cases we can choose $k$ large enough so that 
\begin{equation}
(i\neq i_{0})\Rightarrow \left|\frac{u_{i}^{0}}{\lambda_{i}(x_{0})}\right|<\left|\frac{u_{i_{0}}^{0}}{\lambda_{i_{0}}(x_{0})}\right|.
\label{ineqi0}
\end{equation} 
%there exists 
%a unique $i\in[1,n]$ such that
%\begin{equation}
%\left|\frac{u_{i_{0}}^{0}}{\lambda_{i_{0}}(x_{0})}\right|=\max\left|\frac{u_{i}^{0}}{\lambda_{i}(x_{0})}\right|.
%\label{ineqi0}
%\end{equation} 
We now define the initial condition by
\begin{gather}
u_{i}(0,x):=\frac{u_{i}^{0}}{m}\chi(x)\frac{e^{-m(x-x_{0})-c}}{\lambda_{i}(x)\sqrt{ f_{i}(x)}},
\label{y1}
\end{gather} 
where $\chi:[0,L]\rightarrow \mathbb{R}$ is a $C^{\infty}$ function with compact support in $(0,L)$ to be determined, such that $\lvert \chi\rvert_{0}$ is independent of $m\in\mathbb{N}^{*}$ which will be set large enough and $c$ is a constant independent from $m$, also to be determined.
In order to simplify the notations we will suppose here that $\lambda_{1}>0$,
%, although it is not necessary
otherwise one only needs to replace $e^{-m(x-x_{0})-c}$ by $e^{-\sgn(\lambda_{1})(m(x-x_{0})+c)}$ to obtain the same result. Note here that the compatibility conditions are satisfied for this initial condition as the function and its derivatives vanish on the boundaries. From (\ref{y01}) and (\ref{y1}), we can choose $\eta_{2}$ small enough and independent of $m$ such that $\lvert \mathbf{u}(0,\cdot) \rvert_{1}<\eta_{1}$. Well-posedness of the system guaranties the existence and uniqueness of a solution $y$ to the system (\ref{system21}),(\ref{nonlocal}) with such initial condition (see Theorem \ref{resultatLiRaoWang}). For simplicity we will conduct the proof assuming that the system is linear, (i.e. $\lambda_{i}(\mathbf{u},\cdot)=\Lambda_{i}$,  $a_{ij}(\mathbf{u},\cdot)=\delta_{ij}\Lambda_{i}(\cdot)$, \textcolor{black}{$E(\mathbf{u},\cdot)=Id$,} and $M(\mathbf{u},\cdot)=M(0,\cdot)$) 
%and $m_{ij}=\delta_{ij}$
 although it is also not needed and is only to simplify the computations. A way to transform the proof for non-linear system is given in the Appendix (see \ref{adapting}).

Before going any further and selecting $\chi$, we shall first give the idea and explain our strategy.
We want to select $\chi$ such that
$\lvert \sqrt{f_{1}} \partial_{t}u_{1}(0,\cdot),..., \sqrt{f_{n}} \partial_{t}u_{n}(0,\cdot)  \rvert_{0}$ is achieved for  $i=1$ and $x=x_{1}$ close to $x_{0}$ and only for such $i$ and $x_{1}$. 
We also want $d/dt\lvert \sqrt{f_{1}} u_{1}(0,\cdot),..., \sqrt{f_{n}} u_{n}(0,\cdot)  \rvert_{0}(0)$ to exist and to be $O\left(d/dt\left(\sqrt{f_{1}(x_{0})} \partial_{t}u_{1}(0,x_{0})/m\right)\right)$ such that 
$dV/dt(0)$ will exist and its sign will be given by the sign of $\sqrt{ f_{1}(x_{0})} \partial_{tt}^{2}u_{1}(0,x_{0})$. Then we will show that this sign is positive.
% (which will not be zero). 
\vspace{\baselineskip}\\
Now let us select $\chi$ in order to achieve these goals. 
Rephrasing our first objective, we want that for all $i\neq 1$
\begin{equation}
\sqrt{f_{1}(x_{1})}\left|\lambda_{1}(x_{1})\partial_{x}u_{1}(0,x_{1})+\sum\limits_{j=1}^{n}M_{1j}u_{j}(0,x_{1})\right|>\sup_{x\in[0,L]}\left(\sqrt{f_{i}(x)}\left|\lambda_{i}(x)\partial_{x}u_{i}(0,x)+\sum\limits_{j=1}^{n}M_{ij}u_{j}(0,x)\right|\right),
\end{equation} 
while the maximum of $\sqrt{f_{1}}\lvert\lambda_{1}\partial_{x}u_{1}(0,\cdot)+\sum\limits_{j=1}^{n}M_{1j}u_{j}(0,\cdot)\rvert$ is achieved only in $x_{1}$, close to $x_{0}$.\\
We search $\chi$ under the form
\begin{equation}
\chi=\phi(m(x-x_{0})),
\label{phi}
\end{equation} 
where $\phi$ is a positive $C^{\infty}$ function with compact support. 
And we search $\chi$ such that all the $\left|\sqrt{f_{i}}\partial_{t} u_{i}(0,\cdot)\right|$ admit their maximum at a single point in a small neighbourhood of $x_{0}$. In that case note that from (\ref{y01}) 
we would indeed get that for $m$ large enough $\lvert \sqrt{ f_{1}} \partial_{t}u_{1}(0,\cdot),..., \sqrt{f_{n}} \partial_{t}u_{n}(0,\cdot)  \rvert_{0}$ is attained for  $i=1$ only 
and at a single point close to $x_{0}$. This will be shown rigorously later (see (\ref{atteint})). 
Now let us look at $\sqrt{f_{i}}\partial_{t} u_{i}(0,\cdot)$
\begin{equation}
\begin{split}
\sqrt{f_{i}}\partial_{t} u_{i}(0,x)=& -u_{i}^{0}e^{-m(x-x_{0})-c}\left[-\chi(x)+\frac{\chi'(x)}{m} +\frac{\chi(x)\lambda_{i}\sqrt{f_{i}}}{m}\left(\frac{1}{\lambda_{i}\sqrt{f_{i}}}\right)'\right.\\ 
&\left.+\frac{1}{m}\sum\limits_{j=1}^{n} M_{ij}\left(\frac{u_{j}^{0}}{u_{i}^{0}}\right)\left(\sqrt{\frac{f_{i}}{f_{j}}}\right)\frac{1}{\lambda_{j}}\chi(x)\right].
\end{split}
\label{quantite}
\end{equation} 
Using (\ref{phi}) and a change of variable $y=m(x-x_{0})$, (\ref{quantite}) becomes
\begin{equation}
\begin{split}
\sqrt{f_{i}}\partial_{t} u_{i}(0,x)=& -u_{i}^{0}e^{-y-c}\left[\vphantom{\sum\limits_{0}^{n}}-\phi(y)+\phi'(y)\right.\\
&\left.+\left(\frac{g_{i}(\frac{y}{m}+x_{0})}{m}+\sum\limits_{j=0}^{n}\frac{f_{ij}(\frac{y}{m}+x_{0})}{m}\right)\phi(y)\right],
\end{split}
\label{quantite2}
\end{equation} 
where $g_{i}$ and $f_{ij}$ are $C^{2}$ bounded functions on $[0,L]$ independent of $m$. 
This comes from the fact that $A$ and $B$ are of class $C^{3}$. This hypothesis, that does not appear in Theorem \ref{resultat1}, is used to apply the implicit function theorem later on (see (\ref{encadrement}) and (\ref{V2})).
Theorem \ref{resultat2} might also be proven with lower hypothesis on the regularity $A$ and $B$, however in most physical case $A$ and $B$ are $C^{3}$ even when the solutions of the system are much less regular.
We can see that the coefficients of 
%the previous equation
the equation (\ref{quantite2}) in $\phi$ and $\phi'$ depend on $m$ and are close to be constant for large $m$. 
One can show that there exists a function $\psi_{0}$ such that $\psi_{0}\in C_{c}^{3}((-1,1))$, such that $\lvert (\psi_{0}(y)-\psi_{0}'(y))e^{-y} \rvert$ has a unique maximum on $[-1,1]$ which is $1$, and such that the second derivative of $\lvert (\psi_{0}(y)-\psi_{0}'(y))e^{-y} \rvert$ does not vanish in this point, i.e. there exists a unique $y_{1}\in(-1,1)$ such that
\begin{gather}
\left|\psi_{0}(y)-\psi_{0}'(y)\right|e^{-y}<1=\left|\psi_{0}(y_{1})-\psi_{0}'(y_{1})\right|e^{-y_{1}},\text{ }\forall y\in[-1,1]\setminus \{y_{1}\},
\label{nondegenerate0}\\
%\text{ and } 
\left(\left|\psi_{0}-\psi_{0}'\right|e^{-Id}\right)''(y_{1})\neq 0.
\label{nondegenerate}
\end{gather} 
The existence of this function $\psi_{0}$ is shown in the Appendix (see \ref{psi0existence}).
We set $\phi:y\rightarrow \psi_{0}(y+y_{1})$ and $c=y_{1}$. Therefore
\begin{equation}
e^{-y-c}\left[-\phi(y)+\phi'(y)\right]=(-\psi_{0}(y+y_{1})+\psi_{0}'(y+y_{1}))e^{-(y+y_{1})},
\end{equation} 
which has a maximum absolute value for $y=0$ with value equal to $1$.
%We know that the maxima of $\sqrt{f_{i}} \partial_{t}u_{i}(0,\cdot)$ are continuous with respect to the coefficients of (\ref{quantite2}) that are $O(\frac{1}{m})$ in the $C^{1}$ norm.
%A checker 1 régularité des lambda_i
Hence, there exists $m_{1}>0$ such that for all $m>m_{1}$ and all $i\in[1,n]$
\begin{gather}
\exists!x_{i}\in[x_{0}-\varepsilon,x_{0}+\varepsilon]: \lvert\sqrt{f_{i}(x_{i})} \partial_{t}u_{i}(0,x_{i})\rvert=\sup_{[0,L]}(\lvert \sqrt{f_{i}} \partial_{t}u_{i}(0,\cdot)\rvert),
\label{encadrement}\\
%\text{and }
-u_{i}^{0}-\frac{C_{i}}{m}\lvert u_{i}^{0}\rvert\leq \sqrt{f_{i}(x_{i})} \partial_{t}u_{i}(0,x_{i})\leq-u_{i}^{0}+\frac{C_{i}}{m}\lvert u_{i}^{0}\rvert,
\label{estimation}
\end{gather} 
where $C_{i}$ are constants that do not depend on $m$. The unicity in (\ref{encadrement}) comes from the condition (\ref{nondegenerate}) which ensures that the maximum stays unique when the function is slightly perturbated. We can actually replace $C_{i}$ by $C=\max_{i}(C_{i})>0$. Therefore, there exists $m_{2}>m_{1}$ such that for all $m>m_{2}$
and $i\in[2,n]$
\begin{equation}
\sup_{[0,L]}(\lvert \sqrt{f_{i}} \partial_{t}u_{i}(0,\cdot)\rvert)\leq(1-\frac{1}{k})\left(1+\frac{C}{m}\right)u_{1}^{0}<u_{1}^{0}\left(1-\frac{C}{m}\right)\leq\sup_{[0,L]}(\lvert \sqrt{f_{1}} \partial_{t}u_{1}(0,\cdot)\rvert).
\label{atteint}
\end{equation} 
Hence, as we announced earlier,
\begin{equation}
\lvert \sqrt{f_{1}} \partial_{t}u_{1}(0,\cdot),..., \sqrt{f_{n}} \partial_{t}u_{n}(0,\cdot)  \rvert_{0}=\lvert \sqrt{f_{i}} \partial_{t}u_{i}(0,x)\rvert \Longleftrightarrow i=1,x=x_{1}.
\label{imply1}
\end{equation} 
Hence, as $u_{1}^{0}>0$ and from (\ref{encadrement}) and (\ref{estimation}),
\begin{equation}
\lvert \sqrt{f_{1}} \partial_{t}u_{1}(0,\cdot),..., \sqrt{f_{n}} \partial_{t}u_{n}(0,\cdot)  \rvert_{0}=-\sqrt{ f_{1}(x_{1})} \partial_{t}u_{1}(0,x_{1}).
\label{inequality}
\end{equation} 
Therefore, 
%from the continuous differentiability of $\mathbf{u}$, 
as the maximum is unique and the inequality of (\ref{atteint}) is strict, and from (\ref{nondegenerate}) and the implicit function theorem, provided that $m$ is large enough there exist $t_{1}>0$ and $x_{a}\in C^{1}([0,t_{1}];[0,L])$ such that
\begin{equation}
\begin{split}
&\lvert \sqrt{ f_{1}} \partial_{t}u_{1}(t,\cdot),..., \sqrt{ f_{n}} \partial_{t}u_{n}(t,\cdot)  \rvert_{0}=-\sqrt{ f_{1}(x_{a}(t))} \partial_{t}u_{1}(t,x_{a}(t)),\text{ }\forall t\in[0,t_{1}],\\
%&\text{and }
& x_{a}(0)=x_{1}.
\end{split}
\label{V2}
\end{equation} 
%And similarly
We seek now to obtain a similar relation for $\lvert \sqrt{ f_{1}} u_{1}(t,\cdot),..., \sqrt{ f_{n}} u_{n}(t,\cdot)  \rvert_{0}$.
One can show that it is possible to find $\psi_{0}$ that satisfies the previous hypothesis (\ref{nondegenerate0}) and (\ref{nondegenerate}) and such that in addition, 
there exists  $y_{2}\in[-1,1]$ such that
\begin{gather}
\left|\psi_{0}(y)\right|e^{-y}<\left|\psi_{0}(y_{2})\right|e^{-y_{2}},\text{ }\forall y\in[-1,1]\setminus \{y_{2}\},
\label{nondegenerate1}\\
\lvert \psi_{0}(y_{2})- \psi_{0}''(y_{2}) \rvert > 0,
%c_{0}
\label{ym0}
\end{gather}
%where $c_{0}$ is a positive constant that does not depend on $m$ 
and such that there exists $m_{3}>0$ such that for all $m>m_{3}$, 
if  $\sup_{y\in[-1,1]}(\psi_{0}(y+y_{1})\frac{e^{-(y+y_{1})}}{\lambda_{i}(\frac{y}{m}+x_{0})})$ is achieved in $y_{m}\in[-1,1]$, then
 \begin{equation}
\lvert \psi_{0}(y_{m}+y_{1})- \psi_{0}''(y_{m}+y_{1}) \rvert > c_{1},
\label{ym}
 \end{equation}
%\begin{gather}
%\lvert \psi_{0}(y_{2})- \psi_{0}''(y_{2}) \rvert > c_{1},
%\label{ym}
%\end{gather} 
where $c_{1}$ is a positive constant that does not depend on $m$. The example of $\psi_{0}$ provided in the Appendix is suitable. 
Thus with $h_{i}(l,y)=\frac{u^{0}_{i}}{\lambda_{i}(yl+x_{0})}\phi(y)e^{-y-y_{1}}$ one has:
\begin{equation}
\partial_{y} h_{i}(0,y_{2}-y_{1})=0.
\end{equation} 
Note that from (\ref{nondegenerate1}), $\psi_{0}(y_{2})=\psi_{0}'(y_{2})$, thus from (\ref{ym0})
\begin{equation}
\left|\partial_{yy} h_{i}(0,y_{2}-y_{1})\right|>0.
\end{equation} 
Therefore from the implicit function theorem, there exists $m_{4}>m_{3}$ such that for all $m>m_{4}$ 
and each $i\in[1,n]$ there exists a unique $y_{i}\in[-1-y_{1},1-y_{1}]$ such that
\begin{gather}
\partial_{y} h_{i}\left(\frac{1}{m},y_{i}\right)=0,\\
\left|y_{i}-(y_{2}-y_{1})\right|\leq\frac{C_{a}}{m},
\label{encadrementyi}
\end{gather}
where $C_{a}$ is a constant independent of $m$.
%A checker 11 de quoi dépend C même si ça ne dépend pas de m.
From (\ref{ineqi0})
%there exists a unique $i_{0}\in[1,n]$ such that
%\begin{equation}
%\max\limits_{i\in[1,n]}(\lvert \frac{u_{i}^{0}}{\lambda_{i}(x_{0})} \rvert)=\frac{u_{i_{0}}^{0}}{\lambda_{i_{0}}(x_{0})}>0.
%\end{equation} 
%Therefore 
there exists $m_{5}>m_{4}$
%and $k$ independant of $m$ 
such that for all $m>m_{5}$,
\begin{equation}
\left| \frac{u_{i}^{0}}{\lambda_{i}\left(\frac{y_{i}}{m}+x_{0}\right)} \right| C_{b} <  \left|\frac{u_{i_{0}}^{0}}{\lambda_{i_{0}}\left(\frac{y_{i_{0}}}{m}+x_{0}\right)}\right|,\text{ }\forall\text{ }i\neq i_{0},
\end{equation} 
where $C_{b}>1$ is a constant independent of $m$. From (\ref{encadrementyi}), we have for any $i\in[1,n]$
\begin{equation}
\left|\frac{\phi(y_{i_{0}})e^{-y_{i_{0}}}}{\phi(y_{i})e^{-y_{i}}}\right|\geq 1-\frac{C_{r}}{m},
\end{equation} 
where $C_{r}$ is a constant independent of $m$. Therefore there exists $m_{6}>m_{5}$ such that for all $m>m_{6}$
\begin{equation}
\left| \frac{u_{i}^{0}}{\lambda_{i}\left(\frac{y_{i}}{m}+x_{0}\right)}\phi(y_{i})e^{-y_{i}} \right| \frac{(1+C_{b})}{2}< \left|\frac{u_{i_{0}}^{0}}{\lambda_{i_{0}}\left(\frac{y_{i_{0}}}{m}+x_{0}\right)}\phi(y_{i_{0}})e^{-y_{i_{0}}}\right|, \text{ }\forall\text{ }i\neq i_{0}.
\label{ineq221}
\end{equation} 
This means that for all $m>m_{6}$ there exists a unique $i_{0} \in[1,n]$ and a unique $x_{a_{0}}\in[x_{0}-\varepsilon,x_{0}+\varepsilon]$ such that
\begin{gather}
\lvert\sqrt{f_{i_{0}}(x_{a_{0}})} u_{i_{0}}(0,x_{a_{0}})\rvert=\sup\limits_{i\in[1,n],x\in[0,L]}\lvert \sqrt{f_{i}} u_{i}(0,\cdot)\rvert.
\label{encadrementz}
%\\
%\text{and }
%-\frac{C_{i}}{m}\lvert u_{i}^{0}\rvert\leq \sqrt{f_{i}(x_{i})} u_{i}(0,x_{i})\leq \frac{C_{i}}{m}\lvert u_{i}^{0}\rvert,
%\label{estimation}
\end{gather} 
%\begin{equation}
% \lvert\sqrt{f_{i}(z_{i})} u_{i}(0,z_{i})\rvert=\sup_{[0,L]}(\lvert \sqrt{f_{i}} u_{i}(0,\cdot)\rvert),
%\end{equation} 
Now if we denote $g(t,x):=\partial_{x}(\sqrt{f_{i_{0}}(x)} u_{i_{0}}(t,x)\sgn(u_{i_{0}}(0,x_{a_{0}})))$, one has that
\begin{equation}
g(0,x_{a_{0}})=0,
\end{equation}
hence
\begin{equation}
\frac{-\lambda_{i_{0}}'(x_{a_{0}})}{m\lambda_{i_{0}}(x_{a_{0}})}\chi(x_{a_{0}})+\frac{\chi'(x_{a_{0}})}{m}=\chi(x_{a_{0}}).
\label{chiderivee}
\end{equation}
Therefore
\begin{equation}
\begin{split}
\partial_{x}g(0,x_{a_{0}})=&-\sgn(\lambda_{i_{0}})\frac{\lvert u_{i_{0}}^{0}\rvert}{m}e^{-m(x_{a_{0}}-x_{0})-y_{1}}\left(\left(\frac{1}{\lambda_{i_{0}}}\right)''(x_{a_{0}})\chi(x_{a_{0}})+\chi''(x_{a_{0}})\frac{1}{\lambda_{i_{0}}(x_{a_{0}})}\right.\\
&\left.+2\chi'(x_{a_{0}})\left(\frac{1}{\lambda_{i_{0}}}\right)'(x_{a_{0}})-m\left(\frac{\chi}{\lambda_{i_{0}}}\right)'(x_{a_{0}})\right.\\
&\left.-m\left(\left(\frac{1}{\lambda_{i_{0}}}\right)'(x_{a_{0}})\chi(x_{a_{0}})-m\frac{\chi(x_{a_{0}})}{\lambda_{i_{0}}(x_{a_{0}})}+\chi'(x_{a_{0}})\frac{1}{\lambda_{i_{0}}(x_{a_{0}})}\right)\right).
\end{split}
\end{equation}
Defining $c_{i0}:=-\sgn(\lambda_{i_{0}})\lvert u_{i_{0}}^{0}\rvert$ which is a non-zero constant, we have from (\ref{phi}) and the definition of $\phi$
\begin{equation}
\partial_{x}g(0,x_{a_{0}})=c_{i0}m\frac{e^{-y_{i_{0}}-y_{1}}}{\lambda_{i_{0}}\left(\frac{y_{i_{0}}}{m}+x_{0}\right)}\left(\psi''_{0}(y_{i_{0}}+y_{1})-2\psi'_{0}(y_{i_{0}}+y_{1})+\psi_{0}(y_{i_{0}}+y_{1})+O\left(\frac{1}{m^{2}}\right)+O\left(\frac{1}{m}\right)\right).
\end{equation}
Observe that, by definition, $y_{i_{0}}$ maximises $\left|\psi_{0}(y+y_{1})\frac{e^{-y-y_{1}}}{\lambda_{i_{0}}(\frac{y}{m}+x_{0})}\right|$, therefore we have from (\ref{ym}) and (\ref{chiderivee}) 
\begin{equation}
\begin{split}
\lvert \partial_{x}g(0,x_{a_{0}}) \rvert=&\lvert c_{a_{0}}\rvert m\left|\frac{e^{-y_{i_{0}}-y_{1}}}{\lambda_{i_{0}}\left(\frac{y_{i_{0}}}{m}+x_{0}\right)}\left(\psi''_{0}(y_{i_{0}}+y_{1})-\psi_{0}(y_{i_{0}}+y_{1})+O\left(\frac{1}{m}\right)\right)\right|,\\
=&\lvert c_{i_{0}}\rvert m\left|\frac{e^{-y_{i_{0}}-y_{1}}}{\lambda_{i_{0}}(\frac{y_{i_{0}}}{m}+x_{0})}\right|\left(c_{1}+O\left(\frac{1}{m}\right)\right).
\end{split}
%u_{i}^{0}(\chi(x)\frac{e^{-m(x-x_{0})-c}}{\lambda_{i}})=e^{-m(x-x_{0})-c}((\frac{1}{\lambda})'\chi-m\frac{\chi}{\lambda}+\chi'(x)\frac{1}{\lambda})
%=e^{-m(x-x_{0})-c}((\frac{1}{\lambda})''\chi+\chi''(x)\frac{1}{\lambda}+2\chi'(x)(\frac{1}{\lambda})'-m(\frac{\chi}{\lambda})'-m((\frac{1}{\lambda})'\chi-m\frac{\chi}{\lambda}+\chi'(x)\frac{1}{\lambda}))
\end{equation}
Hence, as the inequality (\ref{ineq221}) is strict and from the implicit function theorem, there exists $m_{7}>m_{6}$ such that for all $m>m_{7}$, $x_{b}\in C^{1}([0,t_{2}];[0,L])$ and $i_{0}\in[1,n]$ such that
\begin{equation}
\begin{split}
\lvert \sqrt{ f_{1}} u_{1}(t,\cdot),..., \sqrt{ f_{n}} u_{n}(t,\cdot)  \rvert_{0}=\sqrt{f_{i_{0}}(x_{b}(t))} u_{i_{0}}(t,x_{b}(t))\sgn(u_{i_{0}}(0,x_{a_{0}})),\text{ }\forall t\in[0,t_{2}],
\text{       }x_{b}(0)=x_{a_{0}}.
 \label{V1}
\end{split}
\end{equation} 
Hence $V$ is $C^{1}$ on $[0,t_{3})$ where $t_{5}=\min(t_{1},t_{2})>0$ and, denoting $s_{a_{0}}:=\sgn(u_{i_{0}}(0,x_{a_{0}}))$, we have from the definition of $V$, (\ref{V2}) and (\ref{V1}) 
\begin{equation}
\begin{split}
\frac{dV}{dt}(0)&=-\sqrt{ f_{1}(x_{1})}\partial_{tt}u_{1}(0,x_{1})- \frac{\partial}{\partial x}(\sqrt{ f_{1}}\partial_{t}u_{1}(0,\cdot))(x_{1})\frac{d x_{a}}{dt}(0)\\
&+s_{a_{0}}\left(\sqrt{ f_{i_{0}}(x_{a_{0}})} \partial_{t} u_{i_{0}}(t,x_{a_{0}})+\frac{\partial}{\partial x}\left(\sqrt{ f_{i_{0}}} u_{i_{0}}(0,\cdot)\right)(x_{a_{0}})\frac{d x_{b}}{dt}(0)\right).
\end{split}
\end{equation}
But now observe that for a fixed $m$, $x_{a_{0}}$ is an interior maximum thus %$x_{a_{0}}\neq0$ and $x_{a_{0}}\neq L$ as $\chi$ vanishes at these points. Therefore $x_{a_{0}}$ is an interior extremum of $\sqrt{ f_{i_{0}}}u_{i_{0}}(0,\cdot)$ which implies that
\begin{equation}
\frac{d}{dx}(\sqrt{ f_{i_{0}}}u_{i_{0}}(0,\cdot))(x_{a_{0}})=0.
\label{interiormax}
\end{equation} 
Also as $\frac{d}{dx}(\sqrt{f_{1}}\partial_{t} u_{1}(0,\cdot))(x_{1})=0$, we have 
\begin{equation}
\frac{dV}{dt}(0)=-\sqrt{ f_{1}(x_{1})}\partial_{tt}^{2}u_{1}(0,x_{1})+s_{a_{0}}\sqrt{ f_{i_{0}}(x_{a})} \partial_{t} u_{i_{0}}(t,x_{a_{0}}).
\label{dVt}
\end{equation} 
Besides as $\phi$ has compact support in $[-1-y_{1},1-y_{1}]$, we have
\begin{equation}
\left|e^{m(x-x_{0})+y_{1}}\chi(x)\right|\leq e^{1}\lVert\chi\rVert_{\infty},
%\left|e^{m(x-x_{0})+y_{1}}\chi(x)\right|\leq e^{1}\lVert\chi\rVert_{\infty},
\label{uniformaly1}
\end{equation} 
and the right-hand side does not depend on $m$, thus
\begin{equation}
\lim_{m\rightarrow +\infty}\left|\frac{e^{m(x-x_{0})+y_{1}}}{m}\chi(x)\right|=0,
\label{decay1}
\end{equation}
uniformally on $[0,L]$ and therefore in particular for $x_{a_{0}}$ (even though $x_{a_{0}}$ might depend on $m$). We denote 
\begin{equation}
V_{2}:=-\sqrt{ f_{1}(x_{a}(t))}\partial_{t}u_{1}(t,x_{a}(t)).          
\end{equation} 
Using (\ref{y1}) and $\frac{d}{dx}(\sqrt{f_{1}}\partial_{t} u_{1}(0,\cdot))(x_{1})=0$,
we have
\begin{equation}
\begin{split}
\frac{dV_{2}}{dt}(0)&=-\sqrt{f_{1}(x_{1})}\partial_{tt}^{2}u_{1}(0,x_{1})\\
 &=-\sqrt{f_{1}(x_{1})}\partial_{t}(-\lambda_{1}\partial_{x} u_{1}(\cdot,x_{1})-\sum\limits_{j=1}^{n}M_{1j}u_{j}(\cdot,x_{1}))(0)\\
 &=-\sqrt{f_{1}(x_{1})}(-\lambda_{1}\partial_{x} (\partial_{t} u_{1}(0,x_{1}))-\sum\limits_{j=1}^{n}M_{1j}\partial_{t}u_{j}(0,x_{1}))\\
 &=-\sqrt{f_{1}(x_{1})}(\lambda_{1}\frac{(\sqrt{f_{1}})'}{\sqrt{f_{1}}}\partial_{t}u_{1}(0,x_{1})-\sum\limits_{j=1}^{n}M_{1j}\partial_{t}u_{j}(0,x_{1}))\\
 &=-\sqrt{f_{1}(x_{1})}(\frac{\lambda_{1}f_{1}'}{2f_{1}}\partial_{t}u_{1}(0,x_{1})-\sum\limits_{j=1}^{n}M_{1j}\partial_{t}u_{j}(0,x_{1})).
 \end{split}
\end{equation} 
And from (\ref{quantite}) and (\ref{estimation})
\begin{equation}
\begin{split}
\frac{dV_{2}}{dt}(0)=&u_{1}^{0}\left(\frac{\lambda_{1}f_{1}'}{2f_{1}}\left(1+O\left(\frac{1}{m}\right)\right)\right.\\
&\left.-\sum\limits_{j=1}^{n}M_{1j}(0,x_{1})\frac{u_{j}^{0}}{u_{1}^{0}}\frac{\sqrt{f_{1}(x_{1})}}{\sqrt{f_{j}(x_{1})}}\left(1+O\left(\frac{1}{m}\right)+\frac{\sqrt{f_{j}}(x_{j})\partial_{t}u_{j}(x_{j})-\sqrt{f_{j}}(x_{1})\partial_{t}u_{j}(x_{1})}{u_{j}^{0}}\right)\right).
\end{split}
\label{99}
\end{equation}
%Here and in the following, the $O$ are taken in the $L^{\infty}$ norm.
%refers to the dependancy in $m$ of the considered term, the only variable of these function being $m$. 
%mean that there exists C>0 independant of $m$ such that \lvert O\left(1/m\right) \rvert \leq \frac{C}{m}
%A checker 1
We know that if $M_{1j}(0,x_{0})\neq0$, then there exists $m_{8}\in\mathbb{N}^{*}$ such that for all $m>m_{8}$, $\sgn(M_{1j}(0,x_{0}))=\sgn(M_{1j}(0,x_{1}))$. We denote by $\mathcal{N}$ the
% subset of $\{1,...,n\}$ such that for all $j\in\mathcal{N}$, $M_{1j}(0,x_{0})=0$  
subset of $j\in\{1,...,n\}$ such that $M_{1j}(0,x_{0})=0$.  
Therefore from (\ref{99}) and (\ref{y01})
\begin{equation}
\begin{split}
\frac{dV_{2}}{dt}(0)=&u_{1}^{0}\left(\left[\frac{\lambda_{1}f_{1}'}{2f_{1}}-M_{11}(0,x_{1})+\sum\limits_{j=2, j\in \mathcal{N}^{c}}^{n}\left|M_{1j}(0,x_{1})\right|\left(1-\frac{1}{k}\right)\frac{\sqrt{f_{1}}}{\sqrt{f_{j}}}\right]\right.\\
&\left.+O\left(\frac{1}{m}\right)+\sum\limits_{j=0}^{n}C_{j}\left(\frac{\sqrt{f_{j}}(x_{j})\partial_{t}u_{j}(x_{j})-\sqrt{f_{j}}(x_{1})\partial_{t}u_{j}(x_{1})}{u_{j}^{0}}\right)\right),
\end{split}
\label{Cj}
\end{equation} 
where $C_{j}$ are constants that do not depend on $m$. 
Now, keeping in mind (\ref{dVt}), we are going to add $s_{a_{0}}\sqrt{ f_{i_{0}}(x_{a_{0}})} \partial_{t} u_{i_{0}}(0,x_{a_{0}})$ to obtain $dV/dt$ at $t=0$. 
But first observe that using (\ref{interiormax}) and
(\ref{uniformaly1})
\begin{equation}
\begin{split}
\sqrt{ f_{i_{0}}(x_{a_{0}})} \partial_{t} u_{i_{0}}(0,x_{a_{0}})&=\sqrt{ f_{i_{0}}(x_{a_{0}})} (-\lambda_{i} \partial_{x} u_{i_{0}}(0,x_{a_{0}})-\sum\limits_{j=1}^{n}M_{i_{0}j} u_{j}(0,x_{a_{0}}))\\
&=\sqrt{ f_{i_{0}}(x_{a_{0}})} \left(\lambda_{i} \frac{(\sqrt{ f_{i_{0}}})'(x_{a_{0}})}{\sqrt{ f_{i_{0}}(x_{a_{0}})}}\frac{u_{i_{0}}^{0}}{m}\chi(x_{a_{0}})\frac{e^{-m(x_{a_{0}}-x_{0})-y_{1}}}{\lambda_{i}\sqrt{f_{i_{0}}(x_{a_{0}})}}-\sum\limits_{j=1}^{n}M_{i_{0}j} \frac{u_{j}^{0}}{m}\chi(x_{a_{0}})\frac{e^{-m(x_{a_{0}}-x_{0})-y_{1}}}{\lambda_{i}\sqrt{ f_{i}(x_{a_{0}})}}\right)\\
&=O\left(\frac{1}{m}\right).
\end{split}
\end{equation} 
Therefore
\begin{equation}
\begin{split}
\frac{dV}{dt}(0)=&\frac{dV_{2}}{dt}(0)+O\left(\frac{1}{m}\right)\\
&=u_{1}^{0}\left(\left[\frac{\lambda_{1}f_{1}'}{2f_{1}}-M_{11}(0,x_{1})+\sum\limits_{j=2, j\in \mathcal{N}^{c}}^{n}\left|M_{1j}(0,x_{1})\right|\left(1-\frac{1}{k}\right)\frac{\sqrt{f_{1}}}{\sqrt{f_{j}}}\right]\right.\\
&\left.+O\left(\frac{1}{m}\right)+\sum\limits_{j=0}^{n}C_{j}\left(\frac{\sqrt{f_{j}}(x_{j})\partial_{t}u_{j}(x_{j})-\sqrt{f_{j}}(x_{1})\partial_{t}u_{j}(x_{1})}{u_{j}^{0}}\right)\right)
+O\left(\frac{1}{m}\right).
\end{split}
\label{expression1}
\end{equation} 
And from (\ref{quantite}) and the definition of $x_{j}$
\begin{equation}
\lim_{m\rightarrow+\infty}\left(\frac{\sqrt{f_{j}}(x_{j})\partial_{t}u_{j}(x_{j})-\sqrt{f_{j}}(x_{1})\partial_{t}u_{j}(x_{1})}{u_{j}^{0}}\right)=0.
\label{lim2}
\end{equation} 
Note that $x_{1}$ and $x_{j}$ both depend on $m$ and tend to $x_{0}$ when $m$ goes to infinity. Also we know that for all $m>m_{2}$, we have $x_{1}\in[x_{0}-\varepsilon,x_{0}+\varepsilon]$. Thus from (\ref{lemma1cont}),
\begin{gather}
%\left[\frac{\lambda_{1}(x_{1})f_{1}'(x_{1})}{2f_{1}(x_{1})}-M_{11}(0,x_{1})+\sum\limits_{j=2,j\in \mathcal{N}^{c}}^{n}\left|M_{1j}(0,x_{1})\right|\left(1-\frac{1}{k}\right)\frac{\sqrt{f_{1}(x_{1})}}{\sqrt{f_{j}(x_{1})}}\right]>0,\text{ }\forall m>m_{2},
%\label{ineq21}\\
%\text{and }
\lim_{m\rightarrow+\infty}\left[\frac{\lambda_{1}(x_{1})f_{1}'(x_{1})}{2f_{1}(x_{1})}-M_{11}(0,x_{1})+\sum\limits_{j=2,j\in \mathcal{N}^{c}}^{n}\left|M_{1j}(0,x_{1})\right|\left(1-\frac{1}{k}\right)\frac{\sqrt{f_{1}(x_{1})}}{\sqrt{f_{j}(x_{1})}}\right]>0.
\label{ineq22}
\end{gather} 
Therefore there exists $m_{9}>0$ such that for all $m>m_{9}$
\begin{equation}
\frac{dV}{dt}(0)>0.
\end{equation} 
But we know \textcolor{black}{from (\ref{decroissance1})} that
\begin{equation}
\frac{dV}{dt}(0)\leq-\gamma V(0)<0.
\label{contra}
\end{equation} 
\textcolor{black}{Note that (\ref{contra}) is true 
%as $V$ verifies an equation similar to (\ref{contra}) in the \textcolor{black}{distributional} sense on $(0,T)$ but as $V$ is $C^{1}$ in $[0,t_{1})$, (\ref{contra}) is also true, and we have contradiction. 
as $V$ is $C^{1}$ in $[0,t_{1})$ and from \eqref{decroissance1}, for any $t\in[0,t_{1})$,
\begin{equation}
 \frac{V(t)-V(0)}{t}\leq V(0)\frac{e^{-\gamma t}-1}{t}
\end{equation} 
which, letting $t\rightarrow 0$, gives (\ref{contra}) and a contradiction.}
%%This ends the proof when $A$ is of class $C^{3}$ and $B$ is of class $C^{2}$, but note that the $C^{3}$ norm and the $C^{2}$ norm play no role in (\ref{expression1}) 
%%, (\ref{ineq21})
%% and (\ref{ineq22}), therefore the result remains true for $A$ and $B$ of class $C^{1}$ by density.
This ends the proof of Theorem \ref{resultat2}.
\end{proof}
\label{s4}

\section{Further details}
The previous results were derived for the $C^{1}$ norm but actually they can be extended to the $C^{p}$ norm, for $p\in\mathbb{N}^{*}$, with the same conditions. 
Namely we can extend the definition of \textit{basic $C^{p}$ Lyapunov function} for $p\in\mathbb{N}^{*}$ by replacing $V$ in Definition \ref{defC1} by
\begin{equation}
V(\mathbf{u}(t,\cdot))=\sum\limits_{k=0}^{p}\left| \textcolor{black}{\sqrt{f_{1}}} (\textcolor{black}{E\partial_{t}^{k}\mathbf{u}(t,\cdot)})_{1},..., \textcolor{black}{\sqrt{f_{n}}}(\textcolor{black}{E\partial_{t}^{k}\mathbf{u}(t,\cdot)})_{n}\right|_{0}.
\end{equation}
Defining the $p-1$ compatibility conditions  as in \cite{Coron1D} at (4.136) (see also (4.137)-(4.142)), the well-posedness still holds \cite{Coron1D} and we can state:
\begin{thm}
Let  a quasilinear hyperbolic system be of the form (\ref{system21}),(\ref{nonlocal}), with $A$ and $B$ of class $C^{p}$, $\Lambda$ defined as in (\ref{Lambda}) and $M$ as in (\ref{M2}), if 
\begin{enumerate}
\item (Interior condition) the system
\begin{equation}
%f_{i}'=-\frac{2}{\Lambda_{i}}\left(\sum\limits_{k=1,k\neq i}^{n}\lvert M_{ik}(0,x)\rvert \frac{f_{i}^{3/2}}{\sqrt{f_{k}}} - M_{ii}(0,x)f_{i} \right),
\Lambda_{i}f_{i}'\leq -2\left(-M_{ii}(0,x)f_{i} + \sum\limits_{k=1,k\neq i}^{n}\lvert M_{ik}(0,x)\rvert \frac{f_{i}^{3/2}}{\sqrt{f_{k}}} \right),
\label{cond1111}
\end{equation} 
admits a solution $(f_{1},...,f_{n})$ on $[0,L]$ such that for all $i\in[1,n]$, $f_{i}>0$,
\item (Boundary condition) there exists a diagonal matrix $\Delta$ with positive coefficients such that
\begin{equation}
%\rho_{\infty}(G'(0))^{2}<\frac{\inf_{i}\left(\frac{f_{i}(d_{i})}{\Delta_{i}^{2}}\right)}{\sup_{i}\left(\frac{f_{i}(L-d_{i})}{\Delta_{i}^{2}}\right)},
%\sup_{i}(\sum\limits_{i=0}^{n}\lvert K_{ij}\rvert \frac{\Delta_{i}}{\Delta_{j}})<\frac{\inf_{i}\left(\frac{f_{i}(d_{i})}{\Delta_{i}^{2}}\right)}{\sup_{i}\left(\frac{f_{i}(L-d_{i})}{\Delta_{i}^{2}}\right)},
\lVert \Delta G'(0)\Delta^{-1}\rVert_{\infty}<\frac{\inf_{i}\left(\frac{f_{i}(d_{i})}{\Delta_{i}^{2}}\right)}{\sup_{i}\left(\frac{f_{i}(L-d_{i})}{\Delta_{i}^{2}}\right)},
\label{condauxbords1}
\end{equation} 
where 
%$\Delta$ is given by the definition of $\rho_{\infty}$ in (\ref{defrho}) and where 
%$d_{i}$ is the output point, i.e. 
$d_{i}=L$ if $\Lambda_{i}>0$, and $d_{i}=0$ otherwise.

\end{enumerate}
Then there exists a basic $C^{p}$ Lyapunov function for the system (\ref{system21}),(\ref{nonlocal}).
\label{resultat11}
\end{thm}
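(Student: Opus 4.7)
The plan is to mimic the proof of Theorem \ref{resultat1}, building the Lyapunov function out of contributions from time derivatives up to order $p$. Concretely, for each $k \in \{0,1,\ldots,p\}$, I introduce
\begin{equation}
W_{k,p} := \left(\int_{0}^{L} \sum_{i=1}^{n} f_{i}(x)^{p} \left((E\partial_{t}^{k}\mathbf{u})_{i}\right)^{2p} e^{-2p\mu s_{i} x}\, dx\right)^{1/2p},
\end{equation}
where $(f_{1},\ldots,f_{n})$ is the positive solution of the perturbed interior system constructed in the proof of Theorem \ref{resultat1}, and $E=E(\mathbf{u},x)$ is the diagonalizer defined in \eqref{defm}. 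For $k=0$ and $k=1$ these are exactly $W_{1,p}$ and $W_{2,p}$ of the earlier proof, so nothing new is required there.

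For a general $k$, I would differentiate the system \eqref{system21} $k$ times in $t$; the variable $\mathbf{v}_{k}:=\partial_{t}^{k}\mathbf{u}$ then satisfies a hyperbolic equation of the form
\begin{equation}
\partial_{t}\mathbf{v}_{k}+A(\mathbf{u},x)\partial_{x}\mathbf{v}_{k}+M(\mathbf{0},x)\mathbf{v}_{k}+\mathcal{R}_{k}=0,
\end{equation}
where $\mathcal{R}_{k}$ collects all remaining terms, which are polynomial in $(\partial_{t}^{j}\partial_{x}^{l}\mathbf{u})_{j+l\leq k,\ j\leq k-1}$ with coefficients depending on $\mathbf{u}$ and $x$. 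These terms are quadratic or higher in the state and are bounded, after the change of variable coming from $E$, by $C|\mathbf{u}|_{C^{p}}\, W_{p}$ where $W_{p}=\sum_{k=0}^{p}W_{k,p}$. Integrating by parts exactly as in the derivation of \eqref{68}, the principal contribution of $dW_{k,p}/dt$ splits into an interior integrand of the same form as $I_{3}$ and a boundary contribution of the same form as $I_{2}$. The interior condition \eqref{cond1111} together with Lemma \ref{lemma} (applied with $y_{i}=\sqrt{f_{i}}(E\partial_{t}^{k}\mathbf{u})_{i}$, exactly as in the Theorem \ref{resultat1} proof) kills the interior integrand, and the boundary condition \eqref{condauxbords1} combined with the higher-order compatibility conditions makes the boundary piece non-positive at the principal order. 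One thus obtains, for each $k$,
\begin{equation}
\frac{dW_{k,p}}{dt}\leq -\frac{\mu\alpha_{0}}{2}W_{k,p}+C_{k}W_{p}|\mathbf{u}|_{C^{p}},
\end{equation}
for $p$ large enough and $|\mathbf{u}|_{C^{p}}$ small enough. Summing over $k$, choosing $|\mathbf{u}|_{C^{p}}<\mu\alpha_{0}/(4\sum_{k}C_{k})$, and letting $p\to+\infty$ as in the conclusion of the Theorem \ref{resultat1} proof yields the exponential decay of the candidate
\begin{equation}
V(\mathbf{u})=\sum_{k=0}^{p}\left|\sqrt{f_{1}}(E\partial_{t}^{k}\mathbf{u})_{1},\ldots,\sqrt{f_{n}}(E\partial_{t}^{k}\mathbf{u})_{n}\right|_{0}.
\end{equation}

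The main obstacle is the boundary contribution for $k\geq 2$. Differentiating the nonlocal boundary condition \eqref{nonlocal} $k$ times in $t$ produces, at the boundary, $\partial_{t}^{k}\mathbf{u}_{+}(t,0) = K\,(\partial_{t}^{k}\mathbf{u}_{+}(t,L),\partial_{t}^{k}\mathbf{u}_{-}(t,0))^{T} + \mathcal{S}_{k}$, where $\mathcal{S}_{k}$ is a polynomial in boundary values of strictly lower order time-derivatives, with coefficients given by derivatives of $G$ at $0$. Because the principal linear part is again the map $K=G'(0)$, the boundary inequality \eqref{condauxbords1} propagates to every order, while the remainder $\mathcal{S}_{k}$ feeds into the same $C_{k}W_{p}|\mathbf{u}|_{C^{p}}$ error through a trace estimate; tracking the dependence of the boundary polynomial corrections in the weighted $L^{2p}$ norms, letting $p\to+\infty$, and exploiting that they are of strictly lower order, is the only place where the argument differs substantively from the $C^{1}$ case. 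The well-posedness at the $C^{p}$ level and the existence of $C^{p+1}$ regular approximating solutions (needed to give a rigorous meaning to $\partial_{t}^{k}\mathbf{u}$ in the computations) are inherited from the $C^{p}$ compatibility conditions of \cite{Coron1D}, so the density argument used at the end of the Theorem \ref{resultat1} proof applies here as well.
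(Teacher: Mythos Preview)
Your proposal is correct and follows essentially the same approach as the paper's own proof in the appendix: introduce the weighted $L^{2p}$ functionals $W_{k,p}$ of the successive time derivatives $E\partial_{t}^{k}\mathbf{u}$, observe that each $\partial_{t}^{k}\mathbf{u}$ solves a system with the same principal part $A(\mathbf{u},x)$ and linearized source $M(\mathbf{0},x)$ plus quadratic remainders, repeat the $I_{2}/I_{3}$ analysis of Theorem \ref{resultat1} at each level, sum, and let $p\to+\infty$. Your discussion of the higher-order boundary terms (the principal part is still $K=G'(0)$, the lower-order polynomial corrections are absorbed in the $C_{k}W_{p}\lvert\mathbf{u}\rvert_{C^{p}}$ error) and of the density/regularity argument is in fact more explicit than the paper's own sketch.
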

\begin{thm}
Let a quasilinear hyperbolic system be of the form (\ref{system21}) with $A$ 
%of class $C^{3}$ and $B$ of class $C^{2}$,
and $B$ of class $C^{p+2}$,
 there exists a control of the form (\ref{nonlocal}) such that there exists a basic $C^{p}$ Lyapunov function if and only if
\begin{equation}
%f_{i}'=-\frac{2}{\Lambda_{i}}\left(\sum\limits_{k=1,k\neq i}^{n}\lvert M_{ik}(0,x)\rvert \frac{f_{i}^{3/2}}{\sqrt{f_{k}}} - M_{ii}(0,x)f_{i} \right),
\Lambda_{i}f_{i}'\leq -2\left(-M_{ii}(0,x)f_{i} + \sum\limits_{k=1,k\neq i}^{n}\lvert M_{ik}(0,x)\rvert \frac{f_{i}^{3/2}}{\sqrt{f_{k}}} \right),
\label{cond1222}
\end{equation} 
admits a solution $(f_{1},...,f_{n})$ on $[0,L]$ such that for all $i\in[1,n]$, $f_{i}>0$.
\label{resultat22}
\end{thm}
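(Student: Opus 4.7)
The plan is to split the equivalence and lean on the already-proven cases.

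\emph{Sufficiency.} If (\ref{cond1222}) admits a positive solution $(f_{1},\dots,f_{n})$, I invoke Theorem~\ref{resultat11} with $G\equiv 0$: then $\lVert\Delta G'(0)\Delta^{-1}\rVert_{\infty}=0$, so (\ref{condauxbords1}) is automatic, and Theorem~\ref{resultat11} directly produces a basic $C^{p}$ Lyapunov function.

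\emph{Necessity.} I run the contradiction argument of Theorem~\ref{resultat2} one step higher. Assume a basic $C^{p}$ Lyapunov function $V$ with coefficients $(f_{1},\dots,f_{n})$ exists but (\ref{cond1222}) has no positive solution with these $f_{i}$. Just as in Theorem~\ref{resultat2}, there exist $x_{0}\in(0,L)$ and (WLOG) $i_{0}=1$ at which the reversed strict inequality (\ref{lemma1cont}) holds on a neighborhood. I rescale the initial condition (\ref{y1}) to
\[
u_{i}(0,x)=\frac{u_{i}^{0}}{m^{p}}\,\chi(x)\,\frac{e^{-m(x-x_{0})-c}}{\lambda_{i}(x)^{p}\sqrt{f_{i}(x)}},
\]
using the same $c$ and $u_{i}^{0}$ as in Theorem~\ref{resultat2} and a bump $\chi(\cdot)=\psi_{0}(m(\cdot-x_{0})+y_{1})$ adapted to the new combination $\phi-p\phi'$ that replaces $\phi-\phi'$ in (\ref{quantite2}), so that the counterparts of (\ref{nondegenerate0})--(\ref{ym}) still hold; compact support of $\chi$ in $(0,L)$ ensures the order-$p$ compatibility conditions. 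Iterating the linearized evolution $p$ times, the prefactor $m^{-p}/\lambda_{i}^{p}$ is calibrated so that $\sqrt{f_{i}}\,\partial_{t}^{p}u_{i}(0,\cdot)$ is $O(1)$ with the same peak structure as $\sqrt{f_{i}}\,\partial_{t}u_{i}(0,\cdot)$ in Theorem~\ref{resultat2}, while $|\sqrt{f_{i}}\,\partial_{t}^{k}u_{i}(0,\cdot)|_{0}=O(m^{k-p})$ for $k<p$.

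Consequently, for $m$ large, each of the $p+1$ $C^{0}$-summands defining $V(\mathbf{u}^{0})$ is attained at a single non-degenerate point (the $k=p$ one being the only one of order $O(1)$), and the implicit function theorem yields $C^{1}$ maximizer curves. The analogous computation of $dV/dt(0)$ then reproduces the expansion (\ref{dVt})--(\ref{expression1}) with $\partial_{t}^{p}u_{1}$ in place of $\partial_{t}u_{1}$ on the dominant $k=p$ slot (the extra time derivative being converted via $\partial_{t}^{p+1}u_{1}=-\lambda_{1}\partial_{x}\partial_{t}^{p}u_{1}-\sum_{j}M_{1j}\partial_{t}^{p}u_{j}$), while the lower $k<p$ slots contribute $O(m^{-1})$. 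Letting $m\to+\infty$ and then $k\to+\infty$, the bracketed quantity tends to the strictly positive number produced by the violation of (\ref{cond1222}) at $x_{0}$, whence $dV/dt(0)>0$, contradicting (\ref{decroissance1}).

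\emph{Main obstacle.} The delicate point is preserving the non-degeneracy and implicit-function constructions of Theorem~\ref{resultat2} at derivative order $p$: $\partial_{t}^{p}u$ must be $C^{2}$ in $(t,x)$, which is exactly why $A,B\in C^{p+2}$ is imposed. A uniform-in-$m$ control of the $O(m^{k-p})$ remainders for $k<p$ on a small time interval $[0,t_{0}]$ independent of $m$, via the method of characteristics, is also required; once that bookkeeping is in place the argument is a direct lift of the $p=1$ proof.
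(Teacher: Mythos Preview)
Your sufficiency argument matches the paper's exactly (invoke Theorem~\ref{resultat11} with $G\equiv 0$, just as Theorem~\ref{resultat2} invokes Theorem~\ref{resultat1}). For necessity the paper's Appendix~\ref{extension} in fact only details the sufficiency side via the augmented state $\mathbf{y}=(\mathbf{u},\partial_t\mathbf{u},\ldots,\partial_t^{p-1}\mathbf{u})$ and leaves the contradiction argument implicit; your direct lift---rescaling the initial datum from order $m^{-1}$ to $m^{-p}$ so that the top slot $k=p$ is $O(1)$ while the slots $k<p$ are $O(m^{k-p})$---is precisely how one completes it and is correct in outline.

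Two points need fixing. First, the combination governing $\sqrt{f_i}\,\partial_t^{p}u_i(0,\cdot)$ is not $\phi-p\phi'$: writing $h(y)=\phi(y)e^{-y}$, each application of $\partial_t\approx -\lambda_i m\,\partial_y$ sends $e^{-y}\psi$ to $e^{-y}(\psi'-\psi)$, so after $p$ steps the leading profile is $e^{-y}\sum_{j=0}^{p}\binom{p}{j}(-1)^{p-j}\phi^{(j)}(y)$, the full binomial iterate. You therefore need a $\psi_0$ for which $\bigl\lvert e^{-y}\sum_{j\le k}\binom{k}{j}(-1)^{k-j}\psi_0^{(j)}\bigr\rvert$ has a unique nondegenerate maximum for every $k\le p$; the Gaussian-times-cutoff of Appendix~\ref{psi0existence} still does the job, but the list of hypotheses on $\psi_0$ is strictly longer than in the $C^1$ case and should be stated. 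Second, the lower slots contribute $O(m^{k-p})$ to $dV/dt(0)$, not uniformly $O(m^{-1})$; the conclusion is unchanged, but if for some intermediate $k$ the maximizing index $i$ is not unique (the analogue of (\ref{ineqi0}) may fail at that level), you should replace differentiability there by the one-sided Lipschitz bound $V_k(t)-V_k(0)\ge -Cm^{k-p}t$, which is enough for the contradiction.
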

A proof of this is included in the Appendix (see \ref{extension}).
\vspace{\baselineskip}\\
This article therefore fills the blank 
about the exponential stability for the $C^{p}$ norm for quasilinear hyperbolic systems with non-zero source term using a Lyapunov approach, for any $p\in\mathbb{N}^{*}$. 

We introduced the notion of \textit{basic $C^{1}$ Lyapunov function} that can be seen as natural Lyapunov function for the $C^{1}$ norm. 
For general quasilinear hyperbolic systems we gave a sufficient interior condition on the system and a sufficient boundary condition such that there exists a basic $C^{1}$ Lyapunov function that ensure exponential stability of the system for the $C^{1}$ norm. We also showed that the interior condition is necessary for the existence of such basic $C^{1}$ Lyapunov function. Therefore in some cases, there cannot exist such basic $C^{1}$ Lyapunov function whatever the 
%boundary control is. 
boundary conditions are.

\section{Acknowledgement}
The author would like to thank his advisor Jean-Michel Coron for suggesting this problem, for his constant support, and for many fruitful discussions. The author would also like to thank Georges Bastin, Sébastien Boyaval, Nicole Goutal, Peipei Shang, Shengquan Xiang and Christophe Zhang for their valuable remarks, and many interesting discussions. The author would also like to thank the ETH - FIM and the ETH - ITS for their support and their warm welcome. Finally the author would like to thank the ANR project Finite 4SoS ANR 15-CE23-0007.

\appendix
\section{Appendix}

\subsection{Bound on the derivative of $W_{2,p}$}
\label{W2derivative}
\paragraph{Derivative of $W_{2,p}$}  

Recall that we have from (\ref{W2})
\begin{equation*}
 W_{2,p}=\left(\int_{0}^{L} \sum\limits_{i=1}^{n}   f_{i}(x)^{p} (\textcolor{black}{E\mathbf{u}_{t}})_{\textcolor{black}{i}}^{2p} e^{-2p\mu s_{i} x} dx \right)^{1/2p},
\end{equation*} 
\textcolor{black}{where $E=E(\mathbf{u}(t,x),x)$ given by \eqref{defm}--\eqref{condId}} and that $\mathbf{u}_{t}$ satisfies the following equation
\begin{equation}
 \mathbf{u}_{tt}+A(\mathbf{u},x)u_{tx}+\left[\frac{\partial A}{\partial  \mathbf{u}}(\mathbf{u},x).\mathbf{u}_{t}\right]\mathbf{u}_{x}+ \frac{\partial B}{\partial  \mathbf{u}}(\mathbf{u},x)\mathbf{u}_{t}=0,
\end{equation} 
where $\partial A/\partial \mathbf{u}.u_{t}$ is the matrix with coefficients $\sum\limits_{k=1}^{n}\partial A_{ij}/\partial \mathbf{u}_{k}(\mathbf{u},x).\partial_{t}\mathbf{u}_{k}(t,x)$.
We can again differentiate $W_{2,p}$ with respect to time along the trajectories which are of class $C^{2}$ (recall that we are proving the estimate (\ref{IWp}) for $C^{2}$ solutions first). Using integration by parts 
%and (\ref{defm})
 as previously:
\begin{equation}
\begin{split}
%\begin{split}
% \frac{dW_{2p}}{dt}= & -\frac{W_{2,p}^{1-2p}}{2p}\left[\sum\limits_{i=1}^{n}\lambda_{i} f_{i}(x)^{p} (\sum\limits_{j=1}^{n} m_{ij}(u_{t})_{j})^{2p} e^{-2p\mu s_{i}x}\right]_{0}^{L} 
% \\  &-W_{2,p}^{1-2p}\int_{0}^{L}\sum\limits_{i=1}^{n}  f_{i}(x)^{p} (\sum\limits_{j=1}^{n} m_{ij}(u_{t})_{j})^{2p-1} \left[\left(\sum\limits_{j=1}^{n}\sum\limits_{k=1}^{n}m_{ij}\left(D_{a}+\frac{\partial B}{\partial \mathbf{u}}(\mathbf{u},x)\right)_{ik} u_{tk}\right)\right.
% \\  &-\left. \sum\limits_{j=1}^{n} \frac{\partial m_{ij}}{\partial  \mathbf{u}}.\mathbf{u}_{t} (u_{t})_{j} \right] e^{-2p\mu s_{i}x} dx 
% \\  &+\frac{W_{2,p}^{1-2p}}{2}\int_{0}^{L}\sum\limits_{i=1}^{n} \left(\lambda_{i}(\mathbf{u},x)  f_{i}(x)^{p-1}f'_{i} (\sum\limits_{j=1}^{n} m_{ij}(u_{t})_{j})^{2p}\right.
% \\  &\left.+\frac{d}{dx}\frac{(\lambda_{i}(\mathbf{u},x))}{p} f_{i}(x)^{p}(\sum\limits_{j=1}^{n} m_{ij}(u_{t})_{j})^{2p}\right. 
% \\  &\left.+2\lambda_{i}   f_{i}^{p} (\sum\limits_{j=1}^{n} m_{ij}(u_{t})_{j})^{2p-1}\sum\limits_{j=1}^{n} \left[\frac{\partial m_{ij}}{\partial  \mathbf{u}}.\mathbf{u}_{x}+\partial_{x}m_{ij}(\mathbf{u},x)\right](u_{t})_{j}\right)e^{-2p\mu s_{i}x} dx
% \\  &-\mu W_{2,p}^{1-2p} \int_{0}^{L}\sum\limits_{i=1}^{n} \lvert\lambda_{i}\rvert   f_{i}^{p}(\sum\limits_{j=1}^{n} m_{ij}(u_{t})_{j})^{2p}e^{-2p\mu s_{i}x} dx,
% \end{split}
 \frac{dW_{2,p}}{dt}= & -\frac{W_{2,p}^{1-2p}}{2p}\left[\sum\limits_{i=1}^{n}\lambda_{i} f_{i}(x)^{p} (\textcolor{black}{E}\mathbf{u}_{t})_{i}^{2p} e^{-2p\mu s_{i}x}\right]_{0}^{L} 
 \\  &-W_{2,p}^{1-2p}\int_{0}^{L}\sum\limits_{i=1}^{n}  f_{i}(x)^{p}(\textcolor{black}{E}\mathbf{u}_{t})_{i}^{2p-1} \left[\left(\textcolor{black}{E}\left(D_{a}+\frac{\partial B}{\partial \mathbf{u}}(\mathbf{u},x)\right).\mathbf{u}_{t}\right)_{\textcolor{black}{i}}\right.
 \\  &\left.\textcolor{black}{- \left(\left(\frac{\partial E}{\partial \mathbf{u}}.\mathbf{u}_{t}\right)\mathbf{u}_{t}+\lambda\left(\frac{\partial E}{\partial \mathbf{u}}.\mathbf{u}_{x}\right)\mathbf{u}_{t}+\lambda(\partial_{x} E)\mathbf{u}_{t}\right)_{i}} \right] e^{-2p\mu s_{i}x} dx 
 \\  &+\frac{W_{2,p}^{1-2p}}{2}\int_{0}^{L}\sum\limits_{i=1}^{n} \left(\lambda_{i}(\mathbf{u},x)  f_{i}(x)^{p-1}f'_{i}(x) (\textcolor{black}{E}\mathbf{u}_{t})_{i}^{2p}\right.
 \\  &\left.+\frac{d}{dx}\frac{(\lambda_{i}(\mathbf{u},x))}{p} f_{i}(x)^{p}(\textcolor{black}{E}\mathbf{u}_{t})_{i}^{2p}\right)e^{-2p\mu s_{i}x} dx
 \\  &-\mu W_{2,p}^{1-2p} \int_{0}^{L}\sum\limits_{i=1}^{n} \lvert\lambda_{i}\rvert   f_{i}^{p}(x)(\textcolor{black}{E}\mathbf{u}_{t})_{i}^{2p}e^{-2p\mu s_{i}x} dx,
 \end{split}
 \label{682}
\end{equation} 
where $D_{a}$ is the matrix with coefficient $\sum\limits_{k=1}^{n}(\partial A_{ik}/\partial u_{j})(\mathbf{u}_{x})_{k}$, so that $D_{a}.\mathbf{u}_{t}=\left[\frac{\partial A}{\partial  \mathbf{u}}(\mathbf{u},x).\mathbf{u}_{t}\right]\mathbf{u}_{x}$.
%a checker
\textcolor{black}{
Observe that $E$ is $C^{2}$ and invertible by definition (given by \eqref{defm}--\eqref{condId}),
%there exists a constant $C_{0}$ independant of $\mathbf{u}$ (and $p$) such that
%\begin{equation}
%|E\mathbf{v}-\mathbf{v}|_{0}\leq C_{0}|\mathbf{u}|_{0}|\mathbf{v}|_{0},\text{   }\forall\text{  }v\in\mathbb{R}^{n}.
%\end{equation} 
%Thus there exists a constant $C_{1}$ independant of $\mathbf{u}$ and $p$ such that
thus $\mathbf{u}_{t}=E^{-1} (E\mathbf{u}_{t})$. We can therefore}
denote, similarly as previously
\begin{equation}
%\mathrm{I}_{21}:=\frac{W_{2,p}^{1-2p}}{2p}\left[\sum\limits_{i=1}^{n}\lambda_{i}   f_{i}(x)^{p} (\sum\limits_{j=1}^{n} m_{ij}(\mathbf{u}_{t})_{j})^{2p} e^{-2p\mu x\sgn(\lambda_{i})}\right]_{0}^{L},
\mathrm{I}_{21}:=\frac{W_{2,p}^{1-2p}}{2p}\left[\sum\limits_{i=1}^{n}\lambda_{i}   f_{i}(x)^{p} (\textcolor{black}{E}\mathbf{u}_{t})_{i}^{2p} e^{-2p\mu s_{i} x}\right]_{0}^{L},
\label{I21}
\end{equation} 
and 
\begin{equation}
%\begin{split}
%\mathrm{I}_{31}=& W_{2,p}^{1-2p}\left(\int_{0}^{L} \sum\limits_{i=1}^{n}   f_{i}(x)^{p} (\sum\limits_{j=1}^{n} m_{ij}(\mathbf{u}_{t})_{j})^{2p-1} (\sum\limits_{j=1}^{n}\sum\limits_{k=1}^{n}m_{ij}\left(D_{a}+\frac{\partial B}{\partial \mathbf{u}}(\mathbf{u},x)\right)_{ik}u_{tk}) e^{-2p\mu s_{i}x} dx \right)
%\\ & -\frac{W_{2,p}^{1-2p}}{2}\int_{0}^{L}\sum\limits_{i=1}^{n} \lambda_{i}(\mathbf{u},x)   f_{i}(x)^{p-1}f'_{i} (\sum\limits_{j=1}^{n} m_{ij}(\mathbf{u}_{t})_{j})^{2p}e^{-2p\mu x\sgn(\lambda_{i})} dx.
%\label{I31}
%\end{split}
\begin{split}
\mathrm{I}_{31}=& W_{2,p}^{1-2p}\left(\int_{0}^{L} \sum\limits_{i=1}^{n}   f_{i}(x)^{p} (\textcolor{black}{E}\mathbf{u}_{t})_{i}^{2p-1} \left(\sum\limits_{k=1}^{n}R_{ik}(\mathbf{u},x)(E\mathbf{u}_{t})_{k}\right) e^{-2p\mu s_{i}x} dx \right)
\\ & -\frac{W_{2,p}^{1-2p}}{2}\int_{0}^{L}\sum\limits_{i=1}^{n} \lambda_{i}(\mathbf{u},x)   f_{i}(x)^{p-1}f'_{i}(x) (\textcolor{black}{E}\mathbf{u}_{t})_{i}^{2p}e^{-2p\mu s_{i} x} dx.
\label{I31}
\end{split}
\end{equation} 
\textcolor{black}{where $R=(R_{ij})_{(i,j)\in[1,n]^{2}}$ is defined as $R:=E\left(D_{a}+\frac{\partial B}{\partial \mathbf{u}}\right)E^{-1}$. 
As $E$ is $C^{1}$ and its inverse is continuous, and from \eqref{condId}, there exists a constant $C_{0}$ independant of $\mathbf{u}$ (and $p$) such that
\begin{equation}
\max\limits_{(i,j)\in[1,n]^{2}}\left|\left(\left(\frac{\partial E}{\partial \mathbf{u}}.\mathbf{u}_{t}\right)E^{-1}+\left(\frac{\partial E}{\partial \mathbf{u}}.\mathbf{u}_{x}\right)E^{-1}+(\partial_{x} E)E^{-1}\right)_{ij}\right|
\leq C_{0}|\mathbf{u}|_{1}.
\end{equation} 
Note that we used \eqref{condId} and the fact that $\partial_{x}(E(\mathbf{0},x))=0$. Thus,}
similarly as for (\ref{decomp3}), we have
\begin{equation}
\frac{dW_{2,p}}{dt}\leq-I_{21}-I_{31}-(\mu \alpha_{0}- \frac{C_{6}}{2p}) W_{2,p} +C_{7}W_{2,p}\lvert\mathbf{u}\rvert_{1},
\label{decomp22}
\end{equation} 
where $C_{6}$ and $C_{7}$ are constants that does not depend on $p$ or $\mathbf{u}$ provided that $\lvert\mathbf{u}\rvert_{1}<\eta$ for $\eta$ small enough but independent of $p$. \textcolor{black}{Recall that $\alpha_{0}$ is defined in Section \ref{s4} right before \eqref{decomp1}}. Just as previously, a sufficient condition such that there exist $p_{1}\in\mathbb{N}^{*}$, $\eta_{1}>0$ and $\mu_{1}$ such that $I_{31}>0$ for $\mu<\mu_{1}$, $p>p_{1}$ and $\lvert \mathbf{u} \rvert_{1}<\eta_{1}$ is 
\begin{equation}
-\lambda_{i} \frac{f'_{i}}{f_{i}}>2\sum\limits_{k=1,k\neq i}^{n}\lvert R_{ik}(\mathbf{u},x)\rvert\sqrt{\frac{f_{i}}{f_{k}}} - 2 R_{ii},
\end{equation} 
%with $R$  
%that is redefined
%defined as $R:=\left(D_{a}+\frac{\partial B}{\partial \mathbf{u}}\right)$. 
But we have from the definition of $D_{a}$\textcolor{black}{, \eqref{condId} and \eqref{M2}}:
\begin{equation}
\textcolor{black}{E}\left(D_{a}+\frac{\partial B}{\partial \mathbf{u}}\right)\textcolor{black}{E^{-1}}=\frac{\partial B}{\partial \mathbf{u}}(0,x)+O(\lvert \mathbf{u} \rvert_{1})\text{ }\textcolor{black}{=M(0,x)+O(\lvert \mathbf{u} \rvert_{1})},
\label{da1}
\end{equation}
%a checker
and recall that in the proof $(f_{1},...,f_{n})$ have been selected such that
\begin{equation}
-\Lambda_{i} \frac{f'_{i}}{f_{i}}>2\sum\limits_{k=1,k\neq i}^{n}\lvert M_{ik}(0,x)\rvert\sqrt{\frac{f_{i}}{f_{k}}} - 2 M_{ii}(0,x).
\label{da2}
\end{equation} 
Thus from (\ref{da1}) and (\ref{da2}) there exist $\eta_{2}>0$, $p_{1}\in\mathbb{N}^{*}$ and $\mu_{1}$ such that if $\mu<\mu_{1}$, $p>p_{1}$ and $\lvert \mathbf{u} \rvert_{1}<\eta_{2}$, 
then $I_{31}>0$. 
It remains to deal with $I_{21}$. \textcolor{black}{As $E$ is $C^{1}$, and from \eqref{condId},
\begin{equation}
(E\mathbf{u}_{t})=\mathbf{u}_{t}+(\mathbf{u}.\textcolor{black}{\mathcal{V}})\mathbf{u}_{t}
\label{daV}
\end{equation} 
where $\textcolor{black}{\mathcal{V}}=\textcolor{black}{\mathcal{V}}(\mathbf{u}(t,x),x)$ is continuous on $\mathcal{B}_{\eta_{0}}\times[0,L]$.
Using \eqref{daV} together with} (\ref{I21}) and proceeding exactly as previously for $I_{2}$, we get
\begin{equation}
%\begin{split}
%\mathrm{I}_{21}=&\frac{W_{2,p}^{1-2p}}{2p}\left(\sum\limits_{i=1}^{m}\lambda_{i}(\mathbf{u}(t,L),L)   f_{i}(L)^{p} ((\mathbf{u}_{t})_{i}(t,L))^{2p} e^{-2p\mu L}\right.
%\\&\left.-\sum\limits_{i=1}^{m}\lambda_{i}(\mathbf{u}(t,0),0)  f_{i}(0)^{p} ((\mathbf{u}_{t})_{i}(t,0))^{2p} \right.\\
%&\left. -\sum\limits_{i=m+1}^{n}\lvert\lambda_{i}(\mathbf{u}(t,L),L)\rvert   f_{i}(L)^{p} ((\mathbf{u}_{t})_{i}(t,L))^{2p} e^{2p\mu L}\right.\\
%&\left.+\sum\limits_{i=m+1}^{n}\lvert\lambda_{i}(\mathbf{u}(t,0),0)\rvert   f_{i}(0)^{p} ((\mathbf{u}_{t})_{i}(t,0))^{2p}\right) +O(\lvert \mathbf{u} \rvert_{1}^{2p}\lvert \mathbf{u} \rvert_{0}),
%\end{split}
\begin{split}
\mathrm{I}_{21}=&\frac{W_{2,p}^{1-2p}}{2p}\left(\sum\limits_{i=1}^{m}\lambda_{i}(\mathbf{u}(t,L),L)   f_{i}(L)^{p} ((\mathbf{u}_{t})_{i}(t,L)\textcolor{black}{+((\mathbf{u}(t,L).\textcolor{black}{\mathcal{V}})\mathbf{u}_{t}(t,L))_{i}})^{2p} e^{-2p\mu L}\right.
\\&\left.-\sum\limits_{i=1}^{m}\lambda_{i}(\mathbf{u}(t,0),0)  f_{i}(0)^{p} ((\mathbf{u}_{t})_{i}(t,0)\textcolor{black}{+((\mathbf{u}(t,0).\textcolor{black}{\mathcal{V}})\mathbf{u}_{t}(t,0))_{i}})^{2p} \right.\\
&\left. -\sum\limits_{i=m+1}^{n}\lvert\lambda_{i}(\mathbf{u}(t,L),L)\rvert   f_{i}(L)^{p} ((\mathbf{u}_{t})_{i}(t,L)\textcolor{black}{+((\mathbf{u}(t,L).\textcolor{black}{\mathcal{V}})\mathbf{u}_{t}(t,L))_{i}})^{2p} e^{2p\mu L}\right.\\
&\left.+\sum\limits_{i=m+1}^{n}\lvert\lambda_{i}(\mathbf{u}(t,0),0)\rvert   f_{i}(0)^{p} ((\mathbf{u}_{t})_{i}(t,0)\textcolor{black}{+((\mathbf{u}(t,0).\textcolor{black}{\mathcal{V}})\mathbf{u}_{t}(t,0))_{i}})^{2p}\right).
%+\textcolor{black}{O\left(|\mathbf{u}|_{1}^{2p+1}\right)}\right).
\end{split}
\end{equation} 
Recall that $K=G'(0)$ and $\Delta=\left(\Delta_{1},...,\Delta_{n}\right)^{T}\in(\mathbb{R}_{+}^{*})^{n}$ are chosen such that 
\begin{equation}
\theta:=\sup_{\lVert \xi \rVert_{\infty}\leq 1}(\sup_{i}(\lvert \sum\limits_{j=1}^{n}(\Delta_{i} K_{ij}\Delta_{j}^{-1})\xi_{j} \rvert))<\frac{\inf_{i}\left(\frac{f_{i}(d_{i})}{\Delta_{i}^{2}}\right)}{\sup_{i}\left(\frac{f_{i}(L-d_{i})}{\Delta_{i}^{2}}\right)}.
\end{equation} 
%We still have actually $\theta=\sup_{i}(\sum\limits_{i=0}^{n}\lvert K_{ij}\rvert \frac{\Delta_{i}}{\Delta_{j}})$, and
%that $\theta=\sup_{i}(\sum\limits_{i=0}^{n}\lvert K_{ij}\rvert \frac{\Delta_{i}}{\Delta_{j}})$ is minimized.
We denote again
\begin{gather}
 \xi_{i}:=\Delta_{i}(\mathbf{u}_{t})_{i}(t,L)\text{ for }i\in[1,m],\\
 \xi_{i}:=\Delta_{i}(\mathbf{u}_{t})_{i}(t,0)\text{ for }i\in[m+1,n].
\end{gather} 
From the fact that $G$ and $\mathbf{u}$ are $C^1$, we can differentiate (\ref{nonlocal}) with respect to time, and we have
\begin{equation}
\begin{pmatrix}
(\mathbf{u}_{t})_{+}(t,0)\\
(\mathbf{u}_{t})_{-}(t,L)
\end{pmatrix}
= K\begin{pmatrix}
(\mathbf{u}_{t})_{+}(t,L)\\
(\mathbf{u}_{t})_{-}(t,0)
\end{pmatrix}
+o\left(\left| \begin{pmatrix}
(\mathbf{u}_{t})_{+}(t,L)\\
(\mathbf{u}_{t})_{-}(t,0)
\end{pmatrix}
\right| 
%\left| \begin{pmatrix}
%u_{+}(t,L)\\
%u_{-}(t,0)
%\end{pmatrix}
%\right| 
\right),
\end{equation} 
where $o(x)$ refers to a function such that $o(x)/\lvert x \rvert$ tends to $0$ when $\lvert\mathbf{u}\rvert_{\textcolor{black}{1}}$ tends to $0$. Thus
\begin{equation}
\begin{split}
\mathrm{I}_{21}=&\frac{W_{2,p}^{1-2p}}{2p}\left(\sum\limits_{i=1}^{m} \lambda_{i}(\mathbf{u}(t,L),L) \frac{f_{i}(L)^{p}}{\Delta_{i}^{2p}} ((\mathbf{u}_{t})_{i}(t,L)\Delta_{i}\textcolor{black}{+o(|\xi|)})^{2p} e^{-2p\mu L}\right.
\\ &\left. +\sum\limits_{i=m+1}^{n} \lvert \lambda_{i}(\mathbf{u}(t,0),0)\rvert \frac{f_{i}(0)^{p}}{\Delta_{i}^{2p}}((\mathbf{u}_{t})_{i}(t,0)\Delta_{i}\textcolor{black}{+o(|\xi|)})^{2p}\right.
\\ &\left.-\sum\limits_{i=1}^{m}\lambda_{i}(\mathbf{u}(t,0),0) \frac{f_{i}(0)^{p}}{\Delta_{i}^{2p}} (\sum\limits_{k=1}^{n}K_{ik}\xi_{k}(t)\frac{\Delta_{i}}{\Delta_{k}}\textcolor{black}{+o(|\xi|)})^{2p}\right.
\\ &\left.-\sum\limits_{i=m+1}^{n}\lvert\lambda_{i}(\mathbf{u}(t,L),L)\rvert \frac{f_{i}(L)^{p}}{\Delta_{i}^{2p}} (\sum\limits_{k=1}^{n}K_{ik}\xi_{k}(t)\frac{\Delta_{i}}{\Delta_{k}}\textcolor{black}{+o(|\xi|)})^{2p}e^{2p\mu L}\right)
%\\ &\left.\textcolor{black}{+o(\lvert \mathbf{u}\rvert_{1}^{2p})}\right).
%\right)
%\\ &+O(\lvert u\rvert_{1}^{2p}\lvert u\rvert_{0}).
\end{split}
\end{equation}
We end by proceeding exactly as for $I_{2}$. Therefore under assumption (\ref{condauxbords}), there exist $p_{3}$, $\mu_{3}$ and $\eta_{3}>0$ such that for $\mu<\mu_{3}$ and $\lvert \mathbf{u} \rvert_{1}<\eta_{3}$, $I_{21}<0$. 
Therefore, as stated in the main text, there exist $\eta_{4}$, $p_{5}$ and $\mu$ such that for all $p>p_{5}$ and $\lvert \mathbf{u} \rvert_{1}<\eta_{4}$
\begin{equation}
\frac{dW_{2,p}}{dt}\leq-\frac{\mu \alpha_{0}}{2} W_{2,p} +C_{7}W_{2,p}\lvert\mathbf{u}\rvert_{1}.
\end{equation}
\subsection{Existence of $\psi_{0}$}
\label{psi0existence}
We want to find a function $\psi_{0}$ that is $C^{1}$ with compact support in $[-1,1]$ such that there exists a unique $y_{1}\in(-1,1)$ such that
\begin{equation}
\lvert\psi_{1}(y)-\psi_{1}'(y)\rvert e^{-y}<\lvert\psi_{1}(y_{1})-\psi_{1}'(y_{1})\rvert e^{-y_{1}},\text{ }\forall y\in[-1,1]\setminus\{y_{1}\}.
\end{equation}
Let $\chi$ be a positive $C_{c}^{1}$ with compact support in in $[-1,1]$ such that 
\begin{equation}
\begin{split}
&\chi\equiv 1\text{ on }\left[-\frac{1}{2},\frac{1}{2}\right],\\
&\lvert\chi\rvert\leq 1\text{ on }\left[-1,1\right],\\
&\lvert\chi'\rvert\leq 3\text{ on }\left[-1,-\frac{1}{2}\right)\cup\left(\frac{1}{2},1\right],
\end{split}
\label{choixchi}
\end{equation}
and let us define $f:y\rightarrow e^{-n_{1}y^{2}}$ where $n_{1}\in\mathbb{N}^{*}$ will be chosen later on. We have

\begin{equation}
(f(y)-f'(y))e^{-y}=e^{-n_{1}y^{2}-y}(1+2n_{1}y).
\end{equation}
Therefore
\begin{equation}
\lvert f(y)-f'(y)\rvert e^{-y}\leq e^{-\frac{n_{1}}{4}+1}(1+2n_{1})\text{ on }\left[-1,-\frac{1}{2}\right)\cup\left(\frac{1}{2},1\right].
\label{ff'}
\end{equation} 
As  $\lim_{n\rightarrow+\infty} e^{-\frac{n}{4}+1}(1+2n)=0$ we can choose $n_{1}\geq1$ large enough such that
\begin{equation}
e^{-\frac{n_{1}}{4}+1}(1+2n_{1})\leq \frac{1}{3}.
\label{choixn}
\end{equation} 
Now let us consider $\psi_{1}=\chi f$, one \textcolor{black}{has}
\begin{equation}
\lvert\psi_{1}(y)-\psi_{1}'(y)\rvert e^{-y}=\lvert \chi(y) (f(y)-f'(y)) e^{-y}-\chi'(y)f(y)e^{-y} \rvert.
\end{equation}
Therefore from (\ref{choixchi}), (\ref{ff'}) and (\ref{choixn}), we have $\text{ on }\left[-1,-\frac{1}{2}\right)\cup\left(\frac{1}{2},1\right]$
\begin{equation}
\lvert\psi_{1}(y)-\psi_{1}'(y)\rvert e^{-y}\leq \frac{1}{3}+\frac{3}{9}<1.
\label{pp1}
\end{equation}
As $g:y\rightarrow \lvert\psi_{1}(y)-\psi_{1}'(y)\rvert e^{-y}$ has compact support on $[-1,1]$ we can define $d$ as
\begin{equation}
d:=\sup_{y\in[-1,1]}(\lvert\psi_{1}(y)-\psi_{1}'(y)\rvert e^{-y}),
\label{defad}
\end{equation}
and $d$ is attained in at least one point. But as $g(0)=1$ and as from (\ref{pp1}) $\lvert g \rvert<1$ on $\left[-1,-\frac{1}{2}\right)\cup\left(\frac{1}{2},1\right]$, $d$ is attained only on $\left(-\frac{1}{2},\frac{1}{2}\right)$, and on $\left(-\frac{1}{2},\frac{1}{2}\right)$ we have
%A checker 1 gérer peut-être plus explicitement le cas -1/2 et 1/2
\begin{equation}
\lvert\psi_{1}(y)-\psi_{1}'(y)\rvert e^{-y}=\lvert f(y)-f'(y)\rvert e^{-y}.
\end{equation}
Let us show now that $\lvert f(y)-f'(y)\rvert e^{-y}$ admits a unique maximum on $\left(-\frac{1}{2},\frac{1}{2}\right)$. We know that $\lvert f(y)-f'(y)\rvert e^{-y}$ attains a maximum $d\geq1$ on $\left(-\frac{1}{2},\frac{1}{2}\right)$ and when it attains this maximum $((f(y)-f'(y))e^{-y})'$ vanishes, therefore
\begin{equation}
e^{-n_{1}y^{2}-y}(2n_{1}-4n_{1}^{2}y^{2}-1-4n_{1}y)=0,
\end{equation}
hence
\begin{equation}
4n_{1}^{2}y^{2}+4n_{1}y+(1-2n_{1})=0.
\end{equation}
This equation has only two solutions: $y_{\pm}=\frac{-1\pm\sqrt{2n_{1}}}{2n_{1}}$ but
\begin{equation}
\lvert f(y_{-})-f'(y_{-})\rvert e^{-y_{-}}=\sqrt{2n_{1}}e^{\frac{1-2n_{1}+4\sqrt{2n_{1}}}{4n_{1}}}>\sqrt{2n_{1}}e^{\frac{1-2n_{1}-4\sqrt{2n_{1}}}{4n_{1}}}=\lvert f(y_{+})-f'(y_{+})\rvert e^{-y_{+}}.
\label{y-}
\end{equation}
Therefore $\lvert f(y)-f'(y)\rvert e^{-y}$ admits its maximum on $\left(-\frac{1}{2},\frac{1}{2}\right)$ at most one time. But we also know that it does admit a maximum on $\left(-\frac{1}{2},\frac{1}{2}\right)$, hence and from (\ref{pp1})
\begin{equation}
\exists ! y_{1}\in(-1,1):\text{ }\lvert\psi_{1}(y)-\psi_{1}'(y)\rvert e^{-y}<\lvert\psi_{1}(y_{1})-\psi_{1}'(y_{1})\rvert e^{-y_{1}},\text{ }\forall y\in[-1,1]\setminus\{y_{1}\}.
\end{equation}
Now we just need to normalize the function and define $\psi_{0}:=\frac{1}{d}\psi_{1}$ where $d$ is given in (\ref{defad}) to obtain the desired function $\psi_{0}$.\\
\vspace{\baselineskip}\\
Observe that this function also satisfies (\ref{nondegenerate1}), (\ref{ym0}) and (\ref{ym}):
Let $y\in\left(-1/2,1/2\right)$, then $\psi_{0}(y)e^{-y}$ is positive and one has
\begin{equation}
(\psi_{0}(\cdot)e^{-Id})'(y)= \frac{1}{d}(-1-2n_{1}y)e^{-y-n_{1}y^{2}},
\end{equation} 
thus on $\left(-1/2,1/2\right)$, $\lvert\psi_{0}\rvert e^{-Id}$ has a unique maximum achieved in $y_{2}=-1/2n_{1}$.
Now let $y\in[-1,1]\setminus\left(-1/2,1/2\right)$, we have from (\ref{choixn})
\begin{equation}
\lvert \psi_{0}(y)\rvert e^{-y} \leq \frac{e^{1-\frac{n_{1}}{4}}}{d} \leq \frac{e^{-\frac{3}{4n_{1}}}}{d}=\psi_{0}(y_{2})e^{-y_{2}}.
\end{equation}
Hence the function admit a unique maximum on $[-1,1]$ and (\ref{nondegenerate1}) is verified. And from (\ref{choixn}) we have
\begin{equation}
\psi_{0}(y_{2})-\psi_{0}''(y_{2})=(-2n_{1}+2)>0.
\end{equation} 
This implies (\ref{ym0}) and we are left with proving (\ref{ym}).
Let again $y+y_{1}\in[-1,1]\setminus\left(-1/2,1/2\right)$, for $i\in[1,n]$ 
%we can still choose $n_{1}$ large enough such that e^{-\frac{n_{1}}{4}}, without contradicting (\ref{choixn})
and $m$ large enough, from (\ref{choixn})
\begin{equation}
\left|\psi_{0}(y+y_{1})\frac{e^{-y-y_{1}}}{\lambda_{i}(\frac{y}{m}+x_{0})}\right| \leq e^{-\frac{n_{1}}{4}}\frac{e^{y_{1}-y_{1}}}{d\inf_{[x_{0}-\frac{1+y_{1}}{m},x_{0}+\frac{(1-y_{1})}{m}]}\lvert\lambda_{1}\rvert}<\left|\psi_{0}(0)\frac{e^{y_{1}-y_{1}}}{\lambda_{1}(x_{0})}\right|,
\end{equation}
which means that $\sup\limits_{[-1,1]}\left|\psi_{0}(y+y_{1})\frac{e^{-y-y_{1}}}{\lambda_{i}(\frac{y}{m}+x_{0})}\right|$ can only be achieved on $(-1/2-y_{1},1/2-y_{1})$. But we also know that on $[-1/2,1/2]$, $\psi_{0}=d^{-1} f$. Therefore let be a $y_{m}$ maximizing $\sup_{[-1,1]}\left|\psi_{0}(y+y_{1})\frac{e^{-y-y_{1}}}{\lambda_{i}(\frac{y}{m}+x_{0})}\right|$, we know that $y_{m}$ exists as [-1,1] is a compact, that $y_{m}$ is an interior maximum and we have
\begin{equation}
\partial_{y}(e^{-n_{1}(y+y_{1})^{2}}\frac{e^{-y-y_{1}}}{d\lambda_{i}(\frac{y}{m}+x_{0})})(y_{m})=0.
\end{equation}
Hence
\begin{equation}
2 n_{1}\left((y_{m}+y_{1})+1+\frac{\lambda_{i}'(\frac{y_{m}}{m}+x_{0})}{m\lambda_{i}(\frac{y_{m}}{m}+x_{0})}\right)\frac{e^{-n_{1}^{2}(y_{m}+y_{1})-y_{m}-y_{1}}}{d\lambda_{i}(\frac{y_{m}}{m}+x_{0})}=0,
\end{equation}
thus
\begin{equation}
(y_{m}+y_{1})=-\frac{1}{2 n_{1}}-\frac{\lambda_{i}'(\frac{y_{m}}{m}+x_{0})}{(2 n_{1})m\lambda_{i}(\frac{y_{m}}{m}+x_{0})}.
\end{equation}
All it remains to show is that for $m$ large enough we have (\ref{ym}).  Let us compute $\psi_{0}''(y_{m}+y_{1})$
\begin{equation}
\begin{split}
\psi_{0}''(y_{m}+y_{1})&=d^{-1}f''(y_{m}+y_{1})=f(y_{m}+y_{1})(-2n_{1}+4n_{1}^{2}(y_{m}+y_{1})^{2})
\\ &=f(y_{m}+y_{1})(-2n_{1}+1+ \left(\frac{\lambda_{i}'(\frac{y_{m}}{m}+x_{0})}{m\lambda_{i}(\frac{y_{m}}{m}+x_{0})}\right)^{2}+\left(\frac{\lambda_{i}'(\frac{y_{m}}{m}+x_{0})}{m\lambda_{i}(\frac{y_{m}}{m}+x_{0})}\right)).
\end{split}
\end{equation}
Therefore there exists $m_{3}>0$ such that for all $m>m_{3}$,
\begin{equation}
\lvert\psi_{0}''(y_{m}+y_{1})-\psi_{0}(y_{m}+y_{1})\rvert>e^{-\frac{1}{n_{1}}}(2 n_{1}-3),
\end{equation}
and as we chose $n_{1}$ large enough, $C:=e^{-\frac{1}{n_{1}}}(2 n_{1}-3)>0$. This ends the proof of the existence of $\psi_{0}$.
\subsection{Adapting proof of Theorem \ref{resultat2} in the nonlinear case}
\label{adapting}
For all $\mathbf{u}(0,\cdot)\in\mathcal{B}_{\eta_{1}}$, 
we can still define
\begin{equation}
u_{i}(0,x)=\frac{u_{i}^{0}}{m}\chi(x)\frac{e^{-m(x-x_{0})-y_{1}}}{\Lambda_{i}(x)\sqrt{ f_{i}(x)}},
\label{y11}
\end{equation}
which is the analogous of (\ref{y1}) in the proof of Theorem \ref{resultat2}. If there are two index $i_{0}$ and $i_{1}$ such that $\min\limits_{i}(\lvert\Lambda_{i}(x_{0})\rvert)$ is achieved  we can still redefine 
$u_{i_{1}}^{0}$ as in (\ref{redefine}).
%Note that (\ref{y11}) and (\ref{y12}) correspond to (\ref{y1}) and (\ref{y01}) in the proof of Theorem \ref{resultat2}. 
Observe then that if (\ref{lemma1cont0}) is satisfied, then there exists $\eta_{3}>0$ such that if $\lvert\mathbf{u}\rvert_{0}<\eta_{3}$ then
\begin{equation}
-\lambda_{i_{0}}(\mathbf{u},x_{0})f'_{i_{0}}(x_{0})<2\sum\limits_{k=1,k\neq i_{0}}^{n}\lvert M_{i_{0}k}(\mathbf{u},x_{0})\rvert\frac{f_{i_{0}}^{3/2}(x_{0})}{\sqrt{f_{k}(x_{0})}} - 2 M_{i_{0}i_{0}}(\mathbf{u},x_{0})f_{i_{0}}(x_{0}).
\end{equation}
From (\ref{y11}), (\ref{quantite}) becomes
\begin{equation}
\begin{split}
\sqrt{f_{i}}\partial_{t} u_{i}(0,x)=& -u_{i}^{0}e^{-m(x-x_{0})-y_{1}}\left[-
\chi(x)+\frac{\chi'(x)}{m} +\frac{\chi(x)\lambda_{i}\sqrt{f_{i}}}{m}\left(\frac{1}{\lambda_{i}\sqrt{f_{i}}}\right)'\right.\\ 
&\left.+\frac{1}{m}\sum\limits_{j=1}^{n} M_{ij}(\mathbf{u},x)\left(\frac{u_{j}^{0}}{u_{i}^{0}}\right)\left(\sqrt{\frac{f_{i}}{f_{j}}}\right)\frac{1}{\lambda_{j}}\chi(x)\right.\\
&\left.+\sum\limits_{j=1}^{n}\left(V_{ij}(\mathbf{u},x).\mathbf{u}(0,x)\right)\left(\frac{u_{j}^{0}}{u_{i}^{0}}\right)\left(\sqrt{\frac{f_{i}}{f_{j}}}\right)\frac{1}{\lambda_{j}}\left(-\chi(x)+\frac{\chi'(x)}{m} +\frac{\chi(x)\lambda_{j}\sqrt{f_{j}}}{m}\left(\frac{1}{\lambda_{j}\sqrt{f_{j}}}\right)' \right)
\right].
\end{split}
\end{equation} 
where $V_{ij}$ are $C^{2}$ functions as we assume
%first
that $A$ is of class $C^{3}$. Therefore
\begin{equation}
\begin{split}
\sqrt{f_{i}}\partial_{t} u_{i}(0,x)=& -u_{i}^{0}e^{-m(x-x_{0})-y_{1}}\left[\left(-
\chi(x)+\frac{\chi'(x)}{m}\right)\left(1+\sum\limits_{j=1}^{n}\left(V_{ij}(\mathbf{u},x).\mathbf{u}(0,x)\right)\left(\frac{u_{j}^{0}}{u_{i}^{0}}\right)\left(\sqrt{\frac{f_{i}}{f_{j}}}\right)\frac{1}{\lambda_{j}}\right)\right.\\
&\left.+\chi(x)\left(\frac{\lambda_{i}\sqrt{f_{i}}}{m}\left(\frac{1}{\lambda_{i}\sqrt{f_{i}}}\right)'+\sum\limits_{j=1}^{n}\left(V_{ij}(\mathbf{u},x).\mathbf{u}(0,x)\right)\left(\frac{u_{j}^{0}}{u_{i}^{0}}\right)\left(\sqrt{\frac{f_{i}}{f_{j}}}\right)\frac{\sqrt{f_{j}}}{m}\left(\frac{1}{\lambda_{j}\sqrt{f_{j}}}\right)'\right) \right.\\ 
&\left.+\frac{1}{m}\sum\limits_{j=1}^{n} M_{ij}(\mathbf{u},x)\left(\frac{u_{j}^{0}}{u_{i}^{0}}\right)\left(\sqrt{\frac{f_{i}}{f_{j}}}\right)\frac{1}{\lambda_{j}}\chi(x)\right].
\end{split}
\label{quantitenonlinear}
\end{equation} 
Now, after the change of variable (\ref{phi}), one \textcolor{black}{has}
\begin{equation}
\begin{split}
\sqrt{f_{i}}\partial_{t} u_{i}(0,x)=& -u_{i}^{0}e^{-y-y_{1}}\left[-
\phi(y)\left(1+\frac{g_{i}(\mathbf{u}(0,y/m+x_{0}),y/m+x_{0})}{m}\right)\right.\\
&\left.+\phi'(y)\left(1+\frac{h_{i}(\mathbf{u}(0,y/m+x_{0}),y/m+x_{0})}{m}\right)\right].
\end{split}
\label{quantitenonlinear2}
\end{equation} 
where $h_{i}$ and $g_{i}$ are bounded functions in the $C^{2}$ norm and are independent of $m$. \textcolor{black}{Thus
\begin{equation}
\begin{split}
\sqrt{f_{i}}&(E\partial_{t} \mathbf{u})_{i}(0,x)= -\left(u_{i}^{0}+\sum\limits_{j=1}^{n}\sqrt{\frac{f_{i}}{f_{j}}}Z_{ij}(\mathbf{u}(0,y/m+x_{0}),y/m+x_{0})u_{j}^{0}\right) e^{-y-y_{1}}\left[-
\phi(y)+\phi'(y)\right]\\
&-u_{i}^{0}e^{-y-y_{1}}\left[-\phi(y)\left(\frac{g_{i}(\mathbf{u}(0,y/m+x_{0}),y/m+x_{0})}{m}\right)+\phi'(y)\left(\frac{h_{i}(\mathbf{u}(0,y/m+x_{0}),y/m+x_{0})}{m}\right)\right]\\
&-\sum\limits_{j=1}^{n}\sqrt{\frac{f_{i}}{f_{j}}}Z_{ij}(\mathbf{u}(0,y/m+x_{0}),y/m+x_{0})u_{j}^{0}e^{-y-y_{1}}\left[-\phi(y)\left(\frac{g_{j}(\mathbf{u}(0,y/m+x_{0}),y/m+x_{0})}{m}\right)\right.\\
&\left.+\phi'(y)\left(\frac{h_{j}(\mathbf{u}(0,y/m+x_{0}),y/m+x_{0})}{m}\right)\right].
\end{split}
\label{withZ}
\end{equation} 
 %(i.e. $E=Id+\mathbf{u}.V$)
where $Z(\mathbf{u},x)=(Z_{ij})_{(i,j)\in[1,n]^{2}}:=\mathbf{u}.V(\mathbf{u},x)$, with $V$ given by \eqref{daV}.}
In addition one also \textcolor{black}{has}
\begin{equation}
u_{i}(0,\frac{y}{m}+x_{0})=\frac{u_{i}^{0}}{m}\phi(y)\frac{e^{-y-y_{1}}}{\Lambda_{i}(\frac{y}{m}+x_{0})\sqrt{f_{i}(\frac{y}{m}+x_{0})}},\text{ }\forall i\in[1,n]
\label{unl}.
\end{equation} 
Thus, the function $y\rightarrow \mathbf{u}(0,y/m+x_{0})$ is $O(1/m)$ in the $C^{2}$ norm, which means that $g_{i}(\mathbf{u}(0,y/m+x_{0}),y/m+x_{0})$ and $g_{i}(\mathbf{u}(0,y/m+x_{0}),y/m+x_{0})$ are $O(1)$ in the $C^{2}$ norm when $m$ tends to $+\infty$.
\textcolor{black}{
%We now use the fact that we have not chosen the bound $\eta_{2}$ on $u_{1}^{0}$ and that $E\mathbf{u}_{t}$ differs from $\mathbf{u}_{t}$ only by a quadratic perturbation, i.e. there exists a constant $C$ independant of $\mathbf{u}$ and $m$ such that 
Similarly, $Z$ is a $C^{2}$ function as $E$ is a $C^{3}$ function (recall that $A$ is $C^{3}$ for Theorem \ref{resultat2}), and there exists a constant $C$ independant of $\mathbf{u}$ and $m$ such that $\max_{(i,j)\in[1,n]^{2}}\left|Z_{ij}(\mathbf{u}(0,y/m+x_{0}),(0,y/m+x_{0}))\right|\leq C|\mathbf{u}(0,y/m+x_{0})|$. 
This, with \eqref{unl}, implies that the terms which involves $Z$ in \eqref{withZ} are all $O\left(1/m\right)$ in the $C^{2}$ norm.}
Therefore we can process similarly as previously
%\textcolor{black}{
% , and defining $w_{i}:=\sqrt{f_{i}}E\partial_{t}\mathbf{u}$ we have
% \begin{equation}
% \begin{split}
% w_{i}=& -\left(u_{i}^{0}+(Z(\mathbf{u}^{0},x)\mathbf{u}^{0})_{i}\right) e^{-y-y_{1}}\left[-
% \phi(y)+\phi'(y)\right]\\
% &-u_{i}^{0}e^{-y-y_{1}}\left[-\phi(y)\left(\frac{g_{i}(\mathbf{u}(0,y/m+x_{0}),y/m+x_{0})}{m}\right)+\phi'(y)\left(\frac{h_{i}(\mathbf{u}(0,y/m+x_{0}),y/m+x_{0})}{m}\right)\right]-\sum\limits_{j=0}^{n}Z_{ij}(\mathbf{u}^{0},x)u_{j}^{0}e^{-y-y_{1}}\left[-\phi(y)\left(\frac{g_{j}(\mathbf{u}(0,y/m+x_{0}),y/m+x_{0})}{m}\right)+\phi'(y)\left(\frac{h_{j}(\mathbf{u}(0,y/m+x_{0}),y/m+x_{0})}{m}\right)\right].
% \end{split}
% \end{equation} 
%  %(i.e. $E=Id+\mathbf{u}.V$)
% where $Z=(Z_{ij})_{(i,j)\in[1,n]^{2}}:=\mathbf{u}^{0}.V$, with $V$ given by \eqref{daV} is continuous on $\mathcal{B}_{\eta_{0}}\times[0,L]$.
for the existence of \textcolor{black}{$(x_{i})_{i\in[1,n]}$,} $t_{1}$ and $x_{a}$ \textcolor{black}{$\in C^{1}([0,t_{1}))$ such that 
%(\ref{imply1})--(\ref{V2}) hold with $\sqrt{f_{i}}E(\partial_{t}\mathbf{u})_{i}(t,x_{a}(t))$ instead of $\sqrt{f_{i}}\partial_{t}u_{i}(t,x_{a}(t))$}. 
% Note that $x_{a}$ and $t_{1}$ do not depend on $u_{1}^{0}$, positive, as all the components of the solution are proportional to this value (see \eqref{y01}).
%We now use the fact that we have not chosen the bound $\eta_{2}$ on $u_{1}^{0}$ and that $E\mathbf{u}_{t}$ differs from $\mathbf{u}_{t}$ only by a quadratic perturbation, i.e. from \eqref{daV} 
%there exists $\delta>0$ such that for any $\eta_{2}\in(0,\delta)$
\begin{equation}
V_{2}(t)=\lvert \sqrt{f_{1}} E\partial_{t}u_{1}(t,\cdot),..., \sqrt{ f_{n}} E\partial_{t}u_{n}(t,\cdot)  \rvert_{0}=-\sqrt{ f_{1}(x_{a}(t))} (E\partial_{t}\mathbf{u})_{1}(t,x_{a}(t)),\text{ }\forall t\in[0,t_{1}),\\
\label{V3m}
\end{equation} 
}
The only thing that remains to be checked is whether we still have the existence of $x_{b}\in C^{1}([0,t_{2}))$ for some $t_{2}$ positive and independent of $m$. 
Existence of a unique $i_{0}$ and $x_{a_{0}}\in[x_{0}-\varepsilon,x_{0}+\varepsilon]$ such that 
\begin{equation}
\lvert\sqrt{f_{i_{0}}(x_{a_{0}})}u_{i_{0}}(0,x_{a_{0}})\rvert=\sup\limits_{i\in[1,n], x\in[0,L]}\lvert\sqrt{f_{i}}u_{i}(0,\cdot)\rvert
\end{equation} 
is granted by the same argument as previously. 
%Existence of $t_{2}$ positive and independent of $m$ and existence of $x_{b}\in C^{0}([0,t_{2}])$, with $x_{a_{0}}:=x_{b}(0)$ is granted by the same argument as previously. Now, defining as previously $g(t,x):=\partial_{x}(\sqrt{f_{i_{0}}(x)} u_{i_{0}}(t,x)\sgn(u_{i_{0}}(0,x_{a_{0}})))$, we know that 
As $\mathbf{u}(0,x)$ is defined exactly as in the linear case, we still have for our choice of $\chi$
\begin{equation}
\partial_{x}g(0,x_{a_{0}})\neq 0.
\end{equation}
This implies the existence of $x_{b}\in C^{1}([0,t_{2}))$ for some $t_{2}$ positive and independent of $m$.\\
%\textcolor{black}{and $u_{1}^{0}$}.

If we look now at the computation of $dV_{2}/dt(0)$ and $dV_{1}/dt(0)$, one \textcolor{black}{has}, 
\textcolor{black}{proceeding as in Section \ref{s4} and using \eqref{defm}
%we define $W_{2}:=-\sqrt{f_{1}(x_{a}(t))}\partial_{t}u_{1}(t,x_{a}(t))$ (which coincide with $V_{2}$ when $E=Id$) and, from \eqref{V3m}},
%\textcolor{black}{
%\begin{equation}
%\begin{split}
%\frac{dV_{2}}{dt}(0)&:=-\sqrt{f_{1}(x_{1})}(E\partial_{tt}^{2}\mathbf{u})_{1}(0,x_{1})-\sqrt{f_{1}(x_{1})}\left((\frac{\partial E}{\partial\mathbf{u}}.\partial_{t}\mathbf{u}(0,x_{1}))\partial_{t}\mathbf{u}\right)_{1}(0,x_{1})\\
%&=\frac{dW_{2}}{dt}(0)-\sqrt{f_{1}(x_{1})}(Z\partial_{tt}^{2}\mathbf{u})(0,x_{1})-\sqrt{f_{1}(x_{1})}(\frac{\partial E}{\partial\mathbf{u}}.\partial_{t}\mathbf{u}(0,x_{1}))\partial_{t}u_{1}(0,x_{1}).
%\end{split}
%\label{relW2}
%\end{equation} 
%We already see that the second term will only be a quadratic perturbation and will be negligible compared to $dW_{2}/dt(0)$. Thus, we look at $W_{2}$, and one has,}
\begin{equation}
\begin{split}
\frac{dV_{2}}{dt}(0)&=-\sqrt{f_{1}(x_{1})}\left(E\partial_{tt}^{2}\mathbf{u}+\left(\frac{\partial E}{\partial\mathbf{u}}.\partial_{t}\mathbf{u}\right)\partial_{t}\mathbf{u}\right)_{1}(0,x_{1})\\
 &=+\sqrt{f_{1}(x_{1})}\left(\lambda_{1}(E\partial_{tx}^{2} \mathbf{u})_{1}(0,x_{1})+(E\left(D_{a}+\frac{\partial B}{\partial\mathbf{u}}\right)E^{-1}E\partial_{t}\mathbf{u})_{1}(0,x_{1})\right)\\
 &-\sqrt{f_{1}(x_{1})}\left(\left(\frac{\partial E}{\partial\mathbf{u}}.\partial_{t}\mathbf{u}\right)E^{-1}E\partial_{t}\mathbf{u}\right)_{1}(0,x_{1})\\
% &+\sum\limits_{j=1}^{n}\left(V_{ij}(\mathbf{u},x).\mathbf{u}(0,x)\right)\partial_{x} u_{j}(0,x)-\sum\limits_{j=1}^{n}(M_{1j}(\mathbf{u},x_{1})-M_{1j}(0,x_{1}))u_{j}(0,x_{1}))\\
&=\sqrt{f_{1}(x_{1})}\left(\lambda_{1}(\partial_{x}(E\partial_{t} \mathbf{u}))_{1}(0,x_{1})+(R(\mathbf{u},x)E\partial_{t}\mathbf{u})_{1}(0,x_{1})\right)\\
&-\sqrt{f_{1}(x_{1})}\left((\lambda\left(\frac{\partial E}{\partial \mathbf{u}}\partial_{x}\mathbf{u}+\partial_{x}E\right)\partial_{t}\mathbf{u})_{1}(0,x_{1})+\left(\left(\frac{\partial E}{\partial\mathbf{u}}.\partial_{t}\mathbf{u}\right)E^{-1}E\partial_{t}\mathbf{u}\right)_{1}(0,x_{1})\right)
%&+\sqrt{f_{1}(x_{1})}\left((R(\mathbf{u},x)-R(\mathbf{0},x))E\partial_{t}\mathbf{u}\right)_{1}\\
% &=-\sqrt{f_{1}(x_{1})}(-\Lambda_{1}\partial_{x} (\partial_{t} u_{1}(0,x_{1}))-\sum\limits_{j=1}^{n}M_{1j}(0,x_{1})\partial_{t}u_{j}(0,x_{1})\\
%% &+\sum\limits_{j=1}^{n}\left(V_{ij}(\mathbf{u},x).\mathbf{u}(0,x)\right)\partial_{x} (\partial_{t}u_{j}(0,x))-\sum\limits_{j=1}^{n}(M_{1j}(\mathbf{u},x_{1})-M_{1j}(0,x_{1}))\partial_{t} u_{j}(0,x_{1})\\
%% &+\sum\limits_{j=1}^{n}\left(\frac{\partial V_{ij}}{\partial \mathbf{u}}(\mathbf{u},x).\partial_{t}\mathbf{u}(0,x) \mathbf{u}(0,x)\right)\partial_{x} u_{j}(0,x)-\sum\limits_{j=1}^{n}\frac{\partial M_{1j}}{\partial \mathbf{u}}(\mathbf{u},x_{1}).\partial_{t}\mathbf{u}(0,x_{1}) u_{j}(0,x_{1})).
%&+\sum\limits_{j=1}^{n}\partial_{t}\left(V_{ij}(\mathbf{u},x).\mathbf{u}(0,x)\right)\partial_{x} u_{j}(0,x)-\sum\limits_{j=1}^{n}\frac{\partial M_{1j}}{\partial \mathbf{u}}(\mathbf{u},x_{1}).\partial_{t}\mathbf{u}(0,x_{1}) u_{j}(0,x_{1}).
 \end{split}
\end{equation} 
where $R=E(D_{a}+\frac{\partial B}{\partial \mathbf{u}})E^{-1}$. Therefore from the definition of $\mathbf{u}(0,x)$ given by (\ref{y11}), one has
\begin{equation}
\begin{split}
\frac{dV_{2}}{dt}(0)
 &=-\sqrt{f_{1}(x_{1})}
\left(-\left(\Lambda_{1}+\frac{l(\mathbf{u}(0,x_{1}),x_{1})}{m}\right)(\partial_{x}(E\partial_{t} \mathbf{u}))_{1}(0,x_{1})+\left(\left(R(\mathbf{0},x)+\frac{v(\mathbf{u}(0,x_{1}),x_{1})}{m}\right)E\partial_{t}\mathbf{u}\right)_{1}(0,x_{1})\right)\\ 
& -\sqrt{f_{1}(x_{1})}\left((\lambda\left(\frac{\partial E}{\partial \mathbf{u}}\partial_{x}\mathbf{u}+\partial_{x}E\right)\partial_{t}\mathbf{u})_{1}(0,x_{1})+\left(\left(\frac{\partial E}{\partial\mathbf{u}}.\partial_{t}\mathbf{u}\right)E^{-1}E\partial_{t}\mathbf{u}\right)_{1}(0,x_{1})\right),\\
 \end{split}
\end{equation} 
} where $l$ and $v$ are bounded functions on $\mathcal{B}_{\eta_{3}}\times[0,L]$ with a bound independent of $m$ from (\ref{uniformaly1}). 
%there exists a constant $C_{1}$ in dependant of $u^{0}_{1}$ and $m$ such that
Hence\textcolor{black}{, using this together with \eqref{condId} and noting that $R(\mathbf{0},x)=M(\mathbf{0},x)$,
\begin{equation}
\begin{split}
\frac{d\textcolor{black}{V}_{2}}{dt}(0)
&=-\sqrt{f_{1}(x_{1})}\left((\Lambda_{1}+\frac{l(\mathbf{u}(0,x_{1}),x_{1})}{m})\frac{(\sqrt{f_{1}})'}{\sqrt{f_{1}}}(E\partial_{t}\mathbf{u})_{1}(0,x_{1})\right.\\
&\left.-\sum\limits_{j=1}^{n}(M_{1j}(0,x_{1})+\frac{v_{1j}(\mathbf{u}(0,x_{1}),x_{1})}{m})(E\partial_{t}\mathbf{u})_{j}(0,x_{1})\right)+O\left(|\mathbf{u}|_{1}^{2}\right)\\
 &=-\sqrt{f_{1}(x_{1})}\left(\frac{\Lambda_{1}f_{1}'}{2 f_{1}}(E\partial_{t}\mathbf{u})_{1}(0,x_{1})\left(1+O\left(\frac{1}{m}\right)\right)-\sum\limits_{j=1}^{n}\left(M_{1j}(0,x_{1})+O\left(\frac{1}{m}\right)\right)(E\partial_{t}\mathbf{u})_{j}(0,x_{1})\right)\\
 &+O\left(|\mathbf{u}|_{1}^{2}\right).
%&=-\sqrt{f_{1}(x_{1})}\left(\frac{\Lambda_{1}f_{1}'}{2 f_{1}}\partial_{t}u_{1}(0,x_{1})\left(1+O\left(\frac{1}{m}\right)\right)-\sum\limits_{j=1}^{n}\left(M_{1j}(0,x_{1})+O\left(\frac{1}{m}\right)\right)\partial_{t}u_{j}(0,x_{1})\right).
 \end{split}
 \label{dV2nl}
\end{equation}
where the $O$ does not depends on $u^{0}_{1}$ but only on an upper bound of $u^{0}_{1}$ (we can choose $\eta_{0}$ for instance). Observe that, from \eqref{y11}, \eqref{withZ}, 
and the fact that $Z=O(1/m)$ for the $C^{2}$ norm, 
%(E\partial_{t}\mathbf{u})_{i}=\partial_{t}u_{i}+\sum\limits_{j=1}^{n}Z_{ij}\partial_{t}\mathbf{u}_{j}
}
we can proceed as previously and we obtain (\ref{Cj}) \textcolor{black}{with an additional $O(|\mathbf{u}|_{1}^{2})$}. Similarly as previously we can obtain
\begin{equation}
\sqrt{ f_{i_{0}}(x_{a_{0}})} \partial_{t} u_{i_{0}}(0,x_{a_{0}})=O\left(\frac{1}{m}\right).
\label{dV1nl}
\end{equation}
The rest of the proof 
\textcolor{black}{to get \eqref{expression1}--\eqref{ineq22}} can then be done identically 
%and we get
%that there exists $m_{9}\in\mathbb{N}^{*}$ such that for any $m>m_{9}$
%\begin{equation}
%\frac{dV_{1}}{dt}+\frac{dW_{2}}{dt}>0,
%\end{equation} 
as all the relations used in the proof still hold in the nonlinear case.
\textcolor{black}{But actually looking at \eqref{expression1}--\eqref{ineq22}, together with \eqref{quantitenonlinear}, \eqref{unl}, \eqref{dV2nl}--\eqref{dV1nl}, there exists $a>0$ independant of $m$ and $C$ independant of $m$ and $u_{1}^{0}$ such that for any $m>m_{9}$,
\begin{equation}
\frac{dV_{1}}{dt}+\frac{dV_{2}}{dt}\geq a u_{1}^{0}-C(|u_{1}^{0}|^{2}).
\end{equation} 
Thus, there exists $\eta_{2}>0$ independant of $m$ such that, for any $m>m_{9}$,
\begin{equation}
 \frac{dV}{dt}>0
\end{equation} 
which ends the proof in the nonlinear case.}
% taking $\sum\limits_{j=0}^{n}m_{i,j}(\mathbf{u},x) \mathbf{u}_{j}$ instead of $\mathbf{u}_{j}$ for all $i\in[1,n]$, 
%as all the relations used in the proof 
%that hold for $\mathbf{u}_{i}$ in the linear case also hold for $\sum\limits_{j=0}^{n}m_{i,j}(\mathbf{u},x) \mathbf{u}_{j}$ instead 
%still hold in the nonlinear case.

\subsection{Extension of the proof to the $C^{q}$ norm}
\label{extension}
To be able to extend the proof for the $C^{q}$ norm one should first define the corresponding compatibility conditions of order $q-1$ that are given for instance in \cite{Coron1D} at (4.136) and see also (4.137)-(4.142). Then one only needs to realize that if we now consider the state $\mathbf{y}=(\mathbf{u},\partial_{t}\mathbf{u}, ..., \partial_{t}^{q-1}\mathbf{u})$, 
%one can see that
 $\mathbf{y}$ is still the solution of a quasilinear hyperbolic system 
%with a matrix $A_{1}(x,\mathbf{u},\partial_{t}\mathbf{u}, ..., \partial_{t}^{p-1}\mathbf{u})$ that is actually given by:
of the form
\begin{equation}
\mathbf{y}_{t}+A_{1}(\mathbf{y},x)\mathbf{y}_{x}+M_{1}(\mathbf{y},x).\mathbf{y}+C=0
\end{equation}
where $\lvert C_{i}\rvert_{0}=O\left(\lvert u,...,\partial^{i-1}_{t}u \rvert_{0}^{2}\right)$, and where the principal matrix $A_{1}$ verifies
\begin{equation}
A_{1}=\begin{pmatrix} A(\mathbf{u},x) & (0) & ...\\
(0) & A(\mathbf{u},x) & (0) & ...\\
(0)& (0) & A(\mathbf{u},x) & ...\\
... & ... & ... & ...
\end{pmatrix}
\end{equation}
and is therefore block diagonal with blocks that are all $A$ as previously. 
%Therefore there exists $\eta>0$ such that for all $\lvert \mathbf{u}\rvert_{0}\leq\eta$, $A_{1}$ can be diagonalised as in (\ref{defm}). 
Similarly $M_{1}(0,x)$ is also block diagonal with blocks that are all $M(0,x)$.
Therefore if we consider the following functions
\begin{equation}
 W_{k+1,p}=\left(\int_{0}^{L} \sum\limits_{i=1}^{n}   f_{i}(x)^{p} (\textcolor{black}{E\partial_{t}^{k}\mathbf{u}})_{j}^{2p} e^{-2p\mu s_{i} x} dx \right)^{1/2p},
 \end{equation}
for all $k\in[0, q]$, where $f_{i}$ are chosen as previously, and if we perform as previously \textcolor{black}{(Section \ref{s4} and Appendix \ref{W2derivative})}, we have existence of $C_{k}>0$ constants, $\eta_{k} > 0$ and $p_{k}\in\mathbb{N}^{*}$ such that for all $p > p_{k}$ and $\lvert \mathbf{y}\rvert_{1} < \eta_{k}$ we have relations of the type
\begin{equation}
\frac{dW_{k+1,p}}{dt}\leq-\frac{\mu \alpha_{0}}{2} W_{k+1,p} +C_{k+1}\sum\limits_{1}^{k} W_{r+1,p}\lvert\mathbf{y}\rvert_{1}.
\end{equation}
for all $k\in[0,q]$. Thus denoting $W_{p}=\sum\limits_{k=0}^{q} W_{k+1,p}$, there exists $C>0$ constant,
$p_{l}\in\mathbb{N}^{*}$ and $\eta_{l}>0$ such that for all $p > p_{l}$ and $\lvert\mathbf{y}\rvert_{1} < \eta_{l}$,
\begin{equation}
\frac{dW_{p}}{dt}\leq-\frac{\mu \alpha_{0}}{2} W_{p} +CW_{p}\lvert\mathbf{y}\rvert_{1}.
\end{equation}
and we could perform as previously to obtain the exponential decay of 
$V=\sum\limits_{k=0}^{q} V_{k+1}$ where $V_{k+1}=\lvert \textcolor{black}{\sqrt{f_{1}}}(\partial_{t}^{k}\mathbf{u})_{1},...,\textcolor{black}{\sqrt{f_{n}}}(\partial_{t}^{k}\mathbf{u})_{n}\rvert_{0}$ and therefore stability for the $C^{q}$ norm.

\bibliographystyle{plain}
\bibliography{Biblio2_11}

\end{document}